\newtheorem{theorem}{\color{black}\indent Theorem}[section]
\newtheorem{lemma}{\color{black}\indent Lemma}[section]
\newtheorem{remark}{\color{black}\indent Remark}[section]
\renewcommand{\baselinestretch}{1.2}
\DeclareMathOperator{\diag}{{diag}}
\DeclareMathOperator{\meas}{{Meas}}
\DeclareMathOperator{\spec}{{Spec}}
\begin{document}
\large
\title{Averaging method for quasi-periodic response solutions}
\author{Jiamin Xing$^{a}$, ~ {Yong Li$^{b,a}$}, ~ {Shuguan Ji$^{a}$}\thanks{Corresponding author.
\newline\hspace*{0.3cm} E-mail addresses: xingjiamin1028@126.com (J. Xing), liyongmath@163.com (Y. Li), jisg100@nenu.edu.cn (S. Ji).}\\{$^{a}$School of Mathematics and Statistics, and}\\
{Center for Mathematics and Interdisciplinary Sciences,}\\
{Northeast Normal University, Changchun 130024, P. R. China.}\\
{$^{b}$College of Mathematics, Jilin University,}
\\{Changchun, 130012, P. R. China.}\\
 }
\date{}

\maketitle
\par
{\bf Abstract.}
In this paper, we present an averaging method for obtaining quasi-periodic response solutions in perturbed, real analytic, quasi-periodic systems with Diophantine frequency vectors.
Under the assumptions that the averaged system possesses a non-degenerate equilibrium and that the eigenvalues of its linearized matrix are pairwise distinct, we show that the original system admits a quasi-periodic response solution for parameters in a Cantorian set.
The proof relies on KAM techniques. It is worth mentioning that our results do not require the equilibrium to be hyperbolic, meaning that the eigenvalues of the linearized matrix of the averaged system may be purely imaginary. Furthermore, the proposed averaging method is applicable to second-order systems, and a higher-order averaging framework is also established.


\par
\vskip 5mm  {\bf Key Words:} Averaging method; Quasi-periodic response solution; KAM theory.
\par
{\bf Mathematics Subject Classification:} 34C29; 70K43; 70K65.
\section{Introduction and main results}
The averaging method  developed by Krylov and Bogolyubov \cite{K-B} and Bogolyubov \cite{B}
establishes a quantitative relationship between the solutions of a perturbed non-autonomous system
and its averaged system. As Hale stated in \cite{Hale}, the averaging method is one of the most important methods for determining periodic and almost periodic solutions of nonlinear differential equations that contain a small parameter.

Consider the system
\begin{align}\label{01.1}
\dot{x}=\varepsilon f(t,x,\varepsilon),
\end{align}
and its corresponding first-order averaged system
$$\dot{x}=\varepsilon \overline{f_0}(x),$$
where $\varepsilon\in\mathbb{R}$ is a small parameter, and $f:\mathbb{R}\times\mathbb{R}^n\times\mathbb{R}\rightarrow\mathbb{R}^n$ is a smooth function that is almost periodic in $t$.
The first-order averaged function $\overline{f_0}$ is defined as
$$\overline{f_0}(x)=\lim\limits_{T\rightarrow +\infty}\frac{1}{T}\int_0^Tf(t,x,0)\mathrm{d}t.$$
The classical averaging method states that if the first-order averaged system has a hyperbolic equilibrium $x_0$
(meaning $\overline{f_0}(x_0)=0$ and all eigenvalues of matrix $\partial_x\overline{f_0}(x_0)$ have nonzero real parts),
then the original system \eqref{01.1} has a hyperbolic almost periodic solution $x^*(t,\varepsilon)$. This solution shares the same stability type as $x_0$, and its module satisfies $m(x^*(\cdot,\varepsilon))\subset m(f(\cdot,x,\varepsilon))$; see  Theorem 3.1 in \cite[Chapter 5]{Hale}.

When $f$ is $T$-periodic, provided that $x_0$ is non-degenerate, i.e., $\det(\partial_x\overline{f_0}(x_0))\neq0$, the existence of a $T$-periodic solution for system \eqref{01.1} can still be guaranteed; see \cite{GH,Hale}.
If the first-order averaged function vanishes, the existence of periodic solutions for system \eqref{01.1} can be determined by higher-order averaged functions.
Through a sequence of $T$-periodic near-identity transformations, system \eqref{01.1} can be transformed into the form:
\begin{align*}
\dot{y}=\sum_{i=1}^N\varepsilon^i Y_i(y)+\varepsilon^{N+1}G( t,y,\varepsilon),
\end{align*}
where $Y_1=\overline{f_0}$, and $Y_2, \dots, Y_N$ can be computed recursively (see, for instance, \cite{S-V-M}). Each function $Y_i$
is called the averaged function of order $i$. Consider the so-called guiding system:
\begin{align*}
\dot{y}= Y_k(y),
\end{align*}
where $Y_k$ is the first non-vanishing averaged function, that is, $Y_i= 0$ for $i\in\{1,\dots,k-1\}$ and $Y_k\neq0$.
If $Y_k$ has an isolated equilibrium point, the existence of a periodic solution for system \eqref{01.1}  can still be established via topological tools \cite{BL,LNT}.

For a non-hyperbolic equilibrium point, the stability of the periodic solution depends on higher-order characteristics of the equilibrium, such as its first Lyapunov coefficient. In such cases, an invariant torus generically bifurcates
from the periodic orbit through a secondary Hopf bifurcation \cite{CN}.
If the vector field of \eqref{01.1} satisfies certain geometric conditions, the computation of higher-order averaged functions can be simplified, and the non-degeneracy condition can  be relaxed, as demonstrated in \cite{NS,XYL}.
For more recent developments and applications of the averaging method for periodic solutions, we refer to \cite{ALP,BLV,GLWZ,LHL,N} and the references therein. In particular, Novaes and Pereira \cite{NP} and Pereira et al. \cite{PNC} developed a higher-order averaging method for the existence of invariant tori, showing that if the guiding system possesses a hyperbolic limit cycle, then the original periodic system admits an invariant torus.

For a quasi-periodic system, a quasi-periodic solution that shares the same basic frequencies as the system is called a response solution. The existence of response solutions in quasi-periodic systems has been extensively studied; see, for example, \cite{BB,B-W,B-B,GV,SXY,XSS}
and the references therein. Since quasi-periodic systems can be regarded as a special class of almost periodic systems, and the module containment relationship $m(x^*(\cdot,\varepsilon))\subset m(f(\cdot,x,\varepsilon))$ implies that in the quasi-periodic case, the solution $x^*(t,\varepsilon)$ and the vector field $f(t,x,\varepsilon)$ share the same basic frequencies. Hence, the classical averaging method establishes the existence of response solutions in the hyperbolic case.

In the non-hyperbolic case, the linearization operator of the system is non-invertible due to the presence of ``small divisors", rendering the proof method used in the hyperbolic case inapplicable.
The main purpose of this paper is to provide a general averaging method for determining response solutions in the real analytic quasi-periodic system \eqref{01.1}, allowing the matrix $\partial_x\overline{f_0}(x_0)$ to have purely imaginary eigenvalues. Additionally, a corresponding higher-order averaging method is presented.

Let $U$ be an open bounded subset of $\mathbb{R}^n$ and let $\mathbb{T}^d$ denote the $d$-dimensional torus, where $d\in\mathbb{N}_+$. Consider the system
\begin{align}\label{1.1}
\dot{x}=\varepsilon f(\omega t,x,\varepsilon),
\end{align}
where $0<\varepsilon<\varepsilon_0$ is a small parameter for some constant $\varepsilon_0>0$, $\omega\in\mathbb{R}^d$ is the basic frequency vector,  and
$f:\mathbb{T}^d\times U\times[0,\varepsilon_0)\rightarrow \mathbb{R}^n$ is continuous and real analytic on  $\mathbb{T}^d\times U$ for each $\varepsilon\in[0,\varepsilon_0)$. Furthermore, assume that $f(\theta,x,\varepsilon)$ and $\partial_x f(\theta,x,\varepsilon)$ are Lipschitz continuous with respect to $\varepsilon$ on $[0,\varepsilon_0)$ uniformly for $(\theta,x)\in\mathbb{T}^d\times U$. Let
$f_0(\omega t,x)=f(\omega t,x,0)$ and
$$\overline{f_0}(x)=\lim\limits_{T\rightarrow +\infty}\frac{1}{T}\int_0^Tf_0(\omega t,x)\mathrm{d}t.$$

Assume $\omega$ satisfies the Diophantine condition:
\begin{align}\label{DC}
|\langle k,\omega\rangle|\geq\gamma|k|^{-\tau},\ \ \ \forall\ k\in\mathbb{Z}^d\backslash\{0\},
\end{align}
where $\gamma>0$, $\tau>d-1$ are fixed constants, and $|k|=|k^{(1)}|+\dots+|k^{(d)}|$ with $k^{(j)}$ denoting the $j$-th component of $k$ for $1 \leq j \leq d$.

Our first main result is the following theorem.
\begin{theorem}\label{theorem}
Suppose that $\omega$ satisfies the Diophantine condition \eqref{DC} and that
there exists a point $x_0\in U$ such that $\overline{f_0}(x_0)=0$ and the eigenvalues of the matrix $\partial_x\overline{f_0}(x_0)$ are all nonzero and pairwise distinct. Then
there exist a constant $0<\varepsilon_1<\varepsilon_0$ and a  Cantorian set $\mathcal{E}\subset(0,\varepsilon_1)$ such that for each $\varepsilon\in\mathcal{E}$, the system
\eqref{1.1} has a quasi-periodic response solution $x^*(t,\varepsilon)$ satisfying $\lim\limits_{\substack{\varepsilon \in \mathcal{E} \\ \varepsilon \rightarrow 0}}x^*(t,\varepsilon)=x_0$ uniformly for $t\in\mathbb{R}$. Moreover, the relative Lebesgue measure of $\mathcal{E}\cap(0,\varepsilon)$ with respect to $(0,\varepsilon)$ tends to 1 as $\varepsilon\rightarrow0$.
\end{theorem}

We prove Theorem \ref{theorem} by establishing the existence of a response solution for the system:
\begin{equation}\label{01.5}
\dot{x}=\varepsilon^a f(\omega t,x)+\varepsilon^bg(\omega t,x,\varepsilon),
\end{equation}
where $b>a>0$ are constants,
$f:\mathbb{T}^d\times U\rightarrow \mathbb{R}^n$ is real analytic, and $g:\mathbb{T}^d\times U\times(0,\varepsilon_0)\rightarrow \mathbb{R}^n$ is continuous and real analytic on  $\mathbb{T}^d\times U$ for each $\varepsilon\in(0,\varepsilon_0)$. Moreover, we assume that $g(\theta,x,\varepsilon)$ and $\partial_x g(\theta,x,\varepsilon)$ are Lipschitz continuous with respect to $\varepsilon$ on $(0,\varepsilon_0)$ uniformly for $(\theta, x) \in \mathbb{T}^d \times U$.

Indeed, through reparameterization, system \eqref{1.1} can be rewritten as system \eqref{01.5}. Consequently, the following result extends to system \eqref{01.5}, which enhances the flexibility of the method in applications.
\begin{theorem}\label{th1.2}
Suppose that $\omega$ satisfies the Diophantine condition \eqref{DC}, and that
there exists a point $x_0\in U$ such that $\overline{f}(x_0)=0$ and the eigenvalues $\lambda^{(1)},\dots,\lambda^{(n)}$ of the matrix $\partial_x\overline{f}(x_0)$ are nonzero and pairwise distinct. Then the conclusion of Theorem \ref{theorem} holds for system
\eqref{01.5}.
\end{theorem}

Jorba and Sim\'{o} \cite{Jo} established the existence of response solutions for the $n$-dimensional real analytic system
\begin{equation}\label{01.6}
\dot{x}=(A+\varepsilon Q(\omega t,\varepsilon))x+\varepsilon g(\omega t,\varepsilon)+h(\omega t,x,\varepsilon),
\end{equation}
where $h(\omega t,x,\varepsilon)=\mathcal{O}(|x|^2)$ as $x\rightarrow0$.  The analysis was based on the following assumptions:
the eigenvalues $\lambda^{(1)}_0,\dots,\lambda^{(n)}_0$
of $A$ are nonzero and pairwise distinct; the non-resonance conditions
$$|\langle k,\omega\rangle-\lambda^{(i)}_0|\geq\gamma|k|^{-\tau}\ {\rm and}\  |\langle k,\omega\rangle-\lambda^{(i)}_0+\lambda^{(j)}_0|\geq\gamma|k|^{-\tau}$$ hold for all $k\in\mathbb{Z}^d\backslash\{0\}$ and $1\leq i, j\leq n$;
and  the eigenvalues of $\underline{A}(\varepsilon)$ are bi-Lipschitz continuous with respect to $\varepsilon$, that is, there exist constants $L_2 \geq L_1 > 0$ such that for all sufficiently small $\varepsilon_1, \varepsilon_2$,
$$L_1|\varepsilon_1-\varepsilon_2|\leq|\lambda^{(i)}(\varepsilon_1)-\lambda^{(j)}(\varepsilon_1)
-(\lambda^{(i)}(\varepsilon_2)-\lambda^{(j)}(\varepsilon_2))|\leq L_2|\varepsilon_1-\varepsilon_2|, \quad 1\leq i< j\leq n,$$
 and
$$L_1|\varepsilon_1-\varepsilon_2|\leq|\lambda^{(i)}(\varepsilon_1)
-\lambda^{(i)}(\varepsilon_2)|\leq L_2|\varepsilon_1-\varepsilon_2|, \quad 1\leq i\leq n.$$
Here,  $\underline{A}(\varepsilon)=A+\varepsilon \overline{Q}(\varepsilon)+\overline{\partial_{x}h(\underline{x}(t,\varepsilon),t,\varepsilon)}$, with $\underline{x}(t,\varepsilon)$ being the unique quasi-periodic solution of $\dot{x}=Ax+\varepsilon g(\omega t,\varepsilon)$.
Under these conditions, they proved that for $\varepsilon$ in a Cantorian set of nearly full measure, the system admits a response solution.

When the matrix $A$ is singular, the system \eqref{01.6} is called degenerate. For $1$-dimensional degenerate systems, Xu and Jiang \cite{XJ}, Si and Si \cite{SS}, and
Cheng, de la Llave and Wang \cite{CdW} studied response solutions for the following equation
\begin{equation*}
\dot{x}=x^{l}+h(\omega t,x)+\varepsilon f(\omega t, x),\ \ \ x\in\mathbb{R},
\end{equation*}
where $l>1$ is an integer and $h(\omega t,x)=\mathcal{O}(|x|^{l+1})$ as $x\rightarrow0$. For response solutions in $2$-dimensional degenerate systems under quasi-periodic perturbations, see \cite{MQX,SY,ZJS}; for $3$-dimensional systems, see \cite{XLW}. The systems \eqref{1.1}
and \eqref{01.5} considered in this paper can be viewed as high-dimensional degenerate
systems, corresponding to the case of $A = 0$ in \eqref{01.6}.

In this paper, we also propose a higher-order averaging method for response solutions based on Theorem \ref{th1.2}.
We consider the higher-order perturbed system
\begin{align*}
\dot{x}=\sum\limits_{k=1}^N\varepsilon^k f_k(\omega t,x)+\varepsilon^{N+1}g(\omega t,x,\varepsilon),
\end{align*}
where $N$ is a fixed positive integer. For each $k\in\{1,\dots, N\}$, the function $f_k$ satisfies the same assumptions as $f$ in system \eqref{01.5},
and $g$ satisfies the corresponding assumptions therein.
We establish that the conclusion of Theorem \ref{theorem} remains valid, provided that the first non-vanishing higher-order averaged function of the system satisfies the same conditions as $\overline{f_0}$ in Theorem \ref{theorem}. A detailed treatment of this result is provided in Section \ref{sec5}.

Our work naturally prompts the question of whether the averaging method developed herein can be generalized to the broader class of almost-periodic systems. While the philosophical core of our method may remain applicable, such an extension faces significant technical hurdles, primarily due to the fact that the frequency module of a general almost-periodic function is not necessarily finitely generated. This introduces profound challenges in controlling the small divisors and in the constructive scheme of normal forms. A detailed investigation of this generalization is beyond the scope of the present work but represents a challenging direction for future research.

The paper is organized as follows. Section \ref{sec2} applies Theorems \ref{theorem} and \ref{th1.2} to investigate the existence of response solutions for second-order perturbed quasi-periodic and homogeneous systems. The proof of Theorem \ref{th1.2} is presented in Section \ref{sec3} and is divided into five parts. Subsequently, Section \ref{sec4} provides the proof of Theorem \ref{theorem}. In Section \ref{sec5}, we develop a higher-order averaging method for response solutions in quasi-periodic systems. Finally, the appendix collects the fundamental lemmas used in our proofs.

\section{Applications}\label{sec2}
\subsection{Averaging method for second-order systems}
As a consequence of  Theorem \ref{theorem}, we derive an averaging method for response solutions of second-order systems.
Let $U$ and $U_0$ be open bounded subsets of $\mathbb{R}^n$ with $0\in U_0$. Consider the system
\begin{align}\label{a1.3}
\ddot{x}=\varepsilon F(\omega t,x,\dot{x},\varepsilon),
\end{align}
where $F:\mathbb{T}^d\times U\times U_0\times[0,\varepsilon_0)\rightarrow \mathbb{R}^n$ is continuous and real analytic on  $\mathbb{T}^d\times U\times U_0$ for each $\varepsilon\in[0,\varepsilon_0)$. Furthermore, $F(\theta,x,y,\varepsilon)$, $\partial_x F(\theta,x,y,\varepsilon)$ and
$\partial_y F(\theta,x,y,\varepsilon)$ are Lipschitz continuous with respect to $\varepsilon$ on $[0,\varepsilon_0)$ uniformly for $(\theta,x,y)\in\mathbb{T}^d\times U\times U_0$.
Let
$$\overline{F_0}(x)=\lim\limits_{T\rightarrow\infty}\frac{1}{T}\int_0^TF(\omega t,x,0,0)\mathrm{d}t,\ \ \ \ x\in U.$$
We have the following result.
\begin{theorem}\label{theorem1}
Assume that $\omega$ satisfies the Diophantine condition \eqref{DC}, and that
there exists a point $x_0\in U$ such that $\overline{F_0}(x_0)=0$ and the eigenvalues of $\partial_x\overline{F_0}(x_0)$ are nonzero and pairwise distinct. Then there exist  a constant $0<\varepsilon_1<\varepsilon_0$ and a Cantorian set $\mathcal{E}\subset(0,\varepsilon_1)$ such that for each $\varepsilon\in\mathcal{E}$, the system
\eqref{a1.3} has a quasi-periodic response solution $x^*(t,\varepsilon)$ satisfying $\lim\limits_{\substack{\varepsilon \in \mathcal{E} \\ \varepsilon \rightarrow 0}}x^*(t,\varepsilon)=x_0$ uniformly for $t\in\mathbb{R}$. The relative Lebesgue measure of $\mathcal{E}\cap(0,\varepsilon)$ with respect to $(0,\varepsilon)$ tends to 1 as $\varepsilon\rightarrow0$.
\end{theorem}

When $F$ is independent of $\dot{x}$, a similar result for Hamiltonian systems was obtained by
 Gentile and Gallavotti \cite{GG}. They considered
the real analytic  Hamiltonian
\begin{align}\label{1.4}
 H=\omega\cdot A+\frac{1}{2}A\cdot A+\frac{1}{2}B\cdot B+\varepsilon f(\alpha,\beta),
 \end{align}
where $(\alpha,A)\in\mathbb{T}^r\times\mathbb{R}^r$ and $(\beta,B)\in\mathbb{T}^s\times\mathbb{R}^s$
with $r, s\geq 1$.
The system written in terms of the angle variables $\alpha$ and $\beta$ alone, is
 \begin{align*}
 \ddot{\alpha}=-\varepsilon\partial_{\alpha}f(\alpha,\beta),\ \ \ \  \ddot{\beta}=-\varepsilon\partial_{\beta}f(\alpha,\beta).
 \end{align*}
Let $\overline{f}(\beta)$ be the average of $f(\alpha,\beta)$ with respect to $\alpha$.
It is proved that if $\omega$ satisfies the Diophantine condition \eqref{DC}, and there exists $\beta_0$ such that $\partial_{\beta}\overline{f}(\beta_0)=0$
and the eigenvalues of the matrix $\partial^2_{\beta}\overline{f}(\beta_0)$ are all different from zero and pairwise distinct, then
there exist $\varepsilon_0>0$ and a set $\mathcal{E}\subset(0,\varepsilon_0)$ such that for each $\varepsilon\in \mathcal{E}$,  the Hamiltonian \eqref{1.4} has a quasi-periodic solution with frequency $\omega$ and the relative Lebesgue measure of
$\mathcal{E}\cap(0,\varepsilon)$ with respect to $(0,\varepsilon)$ tends to $1$ as $\varepsilon\rightarrow 0$.

In \cite{CG}, Corsi and Gentile considered the $1$-dimensional system
 \begin{align}\label{1.5}
\ddot{\beta}=-\varepsilon\partial_{\beta}f(\omega t,\beta),\ \ \ \ \ \beta\in \mathbb{T}^1.
 \end{align}
They
proved the existence of response solutions without restrictions on $\overline{f}(\beta)$. Recently,
Ma and Xu \cite{MX} found that in some cases, the system may have two response solutions. For $\beta\in\mathbb{T}^n$,
Corsi and Gentile \cite{CG1} obtained the responses solutions of system \eqref{1.5} under the condition that $f$ is even in $\alpha$.

When the vector field depends on $\dot{x}$, Si and Yi \cite{SY} studied the $1$-dimensional system
\begin{align}\label{1.6}
\ddot{x}=\varepsilon F(\omega t,x,\dot{x}).
\end{align}
They showed the existence of response solutions
under the condition that $F(\omega t,x,0)$ is a polynomial in $x$. To our knowledge, there is currently limited literature on the existence of response solutions for
system \eqref{1.6} in the high-dimensional case.

Now we prove Theorem \ref{theorem1}.
\begin{proof}[Proof of Theorem \ref{theorem1}]
By rescaling the parameter $\varepsilon=\epsilon^2$, system \eqref{a1.3} can be written as
\begin{equation}\label{3.72}
\begin{cases}
\dot{x}=\epsilon y,\\
\dot{y}=\epsilon F(\omega t,x,\epsilon y,\epsilon^2).
\end{cases}
\end{equation}
Let  $f(\omega t,x,y,\epsilon)=(y,F(\omega t,x,\epsilon y,\epsilon^2))^\top$. Clearly, $f$
satisfies the smoothness conditions of Theorem \ref{theorem}.
It follows from $\overline{F_0}(x_0)=0$ that $\overline{f_0}(x_0,0)=0$, where
$f_0(\omega t,x,y)=f(\omega t,x,y,0)=(y,F(\omega t,x,0,0))^\top$.
 A direct calculation shows that the eigenvalues of matrix $\partial_x\overline{f_0}(x_0,0)$ are the square roots of the eigenvalues of $-\partial_x\overline{F_0}(x_0)$.
Since the eigenvalues of $\partial_x\overline{F_0}(x_0)$ are nonzero and distinct from each other, we obtain that the eigenvalues of   $\partial_x\overline{f_0}(x_0,0)$ are also nonzero and pairwise distinct.
Consequently, Theorem \ref{theorem1} can be obtained from Theorem \ref{theorem}.
\end{proof}

\subsection{Response solutions for perturbed homogeneous systems}
Cheng, de la Llave, and Wang \cite{CdW} studied the response solutions for the $n$-dimensional system:
\begin{equation}\label{1.7}
\dot{x}=\phi(x)+h(\omega t,x)+\varepsilon f(\omega t, x),
\end{equation}
where $\phi: \mathbb{R}^n\rightarrow\mathbb{R}^n$ is a homogeneous function of degree $l$ with $l\geq 2$ an integer, i.e., $\phi(\lambda x)=\lambda^l\phi(x)$
for all $\lambda>0$ and $x\in\mathbb{R}^n$, and $h$ vanishes to order $l+1$ with respect to $x$.
It is proved that if $\overline{f}(0)\neq0$, there exists an $x_0\in\mathbb{R}^n$ such that $\phi(x_0)=-\overline{f}(0)$, and that
the real parts of all the eigenvalues of $\partial_x\phi(x_0)$ are nonzero, then for sufficiently small $\varepsilon>0$, system
\eqref{1.7} has a response solution.

By applying Theorem \ref{th1.2}, we can immediately obtain the response solution for the real analytic system \eqref{1.7} in the case where $\partial_x\phi(x_0)$ has purely imaginary eigenvalues. In fact, by scaling $x=\varepsilon^{\frac{1}{l}} y$, system \eqref{1.7} is transformed into
\begin{equation*}
\dot{y}=\epsilon^{l-1} \phi_1(\omega t,y)+\epsilon^lg(\omega t,y,\epsilon),
\end{equation*}
where $\epsilon=\varepsilon^{1/l} $, $\phi_1(\omega t,y)=\phi(y)+f(\omega t,0)$, and
$$g(\omega t,y,\epsilon)=\frac{1}{\epsilon^{l+1}}h(\omega t,\epsilon y)+\frac{1}{\epsilon}(f(\omega t,\epsilon y)-f(\omega t,0)).$$
Since the system is analytic and $h$ vanishes to order $l+1$ with respect to $y$, it follows that $\phi_1$ and $g$ satisfy the conditions of Theorem \ref{th1.2}.
Note that $\overline{\phi_1}(y)=\phi(y)+\overline{f}(0)$. Applying Theorem  \ref{th1.2}, we immediately have the following result.
\begin{theorem}
Assume that $\omega$ satisfies the Diophantine condition \eqref{DC}, and that
there exists a point $x_0$ such that $\phi(x_0)=-\overline{f}(0)$ and the eigenvalues of $\partial_x\phi(x_0)$ are nonzero and pairwise distinct. Then
there exist a constant $\varepsilon_1>0$ and a Cantorian set $\mathcal{E}\subset(0,\varepsilon_1)$ such that for each $\varepsilon\in\mathcal{E}$, system
\eqref{1.7} has a quasi-periodic response solution $x^*(t,\varepsilon)$ satisfying $\lim\limits_{\substack{\varepsilon \in \mathcal{E} \\ \varepsilon \rightarrow 0}}x^*(t,\varepsilon)=0$ uniformly for $t\in\mathbb{R}$. The relative Lebesgue measure of $\mathcal{E}\cap(0,\varepsilon)$ with respect to $(0,\varepsilon)$ tends to $1$ as $\varepsilon\rightarrow0$.
\end{theorem}

\section{Proof of Theorem \ref{th1.2}}\label{sec3}
Choosing $\tilde{c} \geq 1$ sufficiently large such that $a\tilde{c} \geq 2$ and $(b-a)\tilde{c} \geq 2$, and rescaling the parameter via $\varepsilon = \epsilon^{\tilde{c}}$, we rewrite system \eqref{01.5} as
\begin{equation}\label{e2.11}
\dot{x}=\epsilon^c f(\omega t,x)+\epsilon^{c+1}g_1(\omega t,x,\epsilon), \quad \epsilon\in(0,\epsilon_0),
\end{equation}
where $\epsilon_0=\varepsilon_0^{{1}/{\tilde{c}}}$, $c=a\tilde{c}$, and $g_1(\omega t,x,\epsilon)=\epsilon^{(b-a)\tilde{c}-1}g(\omega t,x,\epsilon^{\tilde{c}})$.
One can easily see that the function $g_1$ satisfies the same smoothness conditions as $g$. The purpose of this scaling transformation is to ensure that the system is Lipschitz continuous with respect to the parameters when performing the subsequent coordinate transformation.

The main idea of the proof is that by applying a fundamental averaging lemma for perturbed quasi-periodic systems, system \eqref{e2.11} can be transformed into
\begin{align*}
\dot{z}&=\left(\epsilon^c\partial_xf(x_0)+ \epsilon^{c+1} B(t,\epsilon)\right)z+\epsilon^{c+1} p( t,\epsilon)
+\epsilon^c h(t,z,\epsilon),
\end{align*}
where $B$, $p$, and $h$ are quasi-periodic in $t$ with frequency vector $\omega$,
and $h(t,z,\epsilon)=\mathcal{O} (|z|^2)$ as $z\rightarrow0$.
Subsequently, using the KAM method developed in \cite{Jo}, the system can be further reduced to
\begin{align}\label{01.3}
\dot{z}=\epsilon^c A_{\infty}(\epsilon)z+\epsilon^c h_{\infty}(t,z,\epsilon),
\end{align}
where $A_{\infty}(\epsilon)$ is a constant matrix and $h_{\infty}(t,z,\varepsilon)=\mathcal{O}(|z|^2)$ as $z\rightarrow0$.
Since each transformation preserves $\omega$-quasi-periodicity, the zero solution of \eqref{01.3}  corresponds to the response solution of the original system \eqref{e2.11}.

In the $m$-th step of the KAM iteration, the system is transformed into the form:
\begin{align*}
\dot{z}_m = \left(\epsilon^c  A_m(\epsilon) + \epsilon^{2^m+c} B_m(t, \epsilon) \right) z_m + \epsilon^{2^m + c} p_m(t, \epsilon) + \epsilon^c h_m(t, z_m, \epsilon),
\end{align*}
where $h_m(t, z_m, \epsilon)=\mathcal{O}(|z_m|^2)$ as $z_m\rightarrow0$.
The $(m+1)$-th KAM iteration consists of two transformations. The first step is to reduce $\epsilon^{2^m + c} p_m(t, \epsilon)$, transforming the system into:
\begin{align*}
\dot{y}_m = \left(\epsilon^c A_m^*(\epsilon) + \epsilon^{2^m+c} B_m^*(t, \epsilon) \right) y_m + \epsilon^{2^{m+1} + c} p_m^*(t, \epsilon) + \epsilon^c h_m^*(t, y_m, \epsilon),
\end{align*}
which requires the eigenvalues $\epsilon^c \lambda_m^{(i)}(\epsilon)$ ($i\in\{1,\dots,n\}$) of $\epsilon^c A_m(\epsilon)$ to satisfy the non-resonance condition:
\begin{align*}
\left| \sqrt{-1} \langle k, \omega \rangle - \epsilon^c \lambda_m^{(i)}(\epsilon) \right| \geq \frac{\gamma}{2} |k|^{-\tau_m} e^{-\nu_m |k|}, \quad \forall k \in \mathbb{Z}^d \backslash \{0\}, \quad 1 \leq i \leq n,
\end{align*}
where $\nu_m$ and $\tau_m$ are the iteration parameters. The second step is to reduce $\epsilon^{2^m + c} B_m^*(t, \epsilon)$, which requires the eigenvalues $\epsilon^c \lambda_m^{*,(1)}(\epsilon),\dots,\epsilon^c \lambda_m^{*,(n)}(\epsilon)$ of $\epsilon^c A_m^*(\epsilon)$ to satisfy the non-resonance condition:
\begin{align*}
\left| \sqrt{-1} \langle k, \omega \rangle - \epsilon^c \lambda_m^{*,(i)}(\epsilon) + \epsilon^c \lambda_m^{*,(j)}(\epsilon) \right| \geq \frac{\gamma}{2} |k|^{-\tau_m} e^{-\nu_m |k|},
\end{align*}
for all $k\in\mathbb{Z}^d\backslash\{0\}$ and $1\leq i,j\leq n$.
To estimate the measure of the set of $\epsilon$ that does not satisfy these non-resonance conditions, for sufficiently small $\epsilon_1 > 0$, it is necessary that $\epsilon^c \lambda_m^{(i)}(\epsilon)$ and $\epsilon^c \lambda_m^{*,(i)}(\epsilon) - \epsilon^c \lambda_m^{*,(j)}(\epsilon)$ are Lipschitz from below with respect to $\epsilon$ on $(0, \epsilon_1)$. In \cite{Jo}, to ensure this, non-degeneracy conditions were assumed, i.e., the eigenvalues of $\underline{A}$ mentioned in the introduction satisfy a bi-Lipschitz condition with respect to the parameter.

In this work, we show that a uniform upper bound exists for the Lipschitz constants of
$\lambda_m^{(i)}(\epsilon)$ and $\lambda_m^{*,(i)}(\epsilon) - \lambda_m^{*,(j)}(\epsilon)$  rather than for $\epsilon^c \lambda_m^{(i)}(\epsilon)$ and $\epsilon^c \lambda_m^{*,(i)}(\epsilon) - \epsilon^c \lambda_m^{*,(j)}(\epsilon)$.
Moreover, we prove that $A_m(\epsilon)$ and $A_m^*(\epsilon)$ are $\epsilon$-close to $\partial_x f(x_0)$. Consequently, $\epsilon^c \lambda_m^{(i)}(\epsilon)$ and $\epsilon^c \lambda_m^{*,(i)}(\epsilon) - \epsilon^c \lambda_m^{*,(j)}(\epsilon)$ are Lipschitz from below on the interval $(\epsilon_1^\delta, \epsilon_1)$ (where $\delta > 1$), while the measure of $(0, \epsilon_1^\delta)$ is small relative to $(0, \epsilon_1)$. Therefore, for the systems considered in this paper, we are able to provide a measure estimate for the parameter set without requiring the non-degeneracy conditions similar to those in \cite{Jo}.

\subsection{Fundamental averaging lemma}
We denote by $\|\cdot\|$ the sup norm of a vector or matrix.
For given $\rho,r>0$, $z\in\mathbb{C}^n$, and $A\in\mathbb{R}^{n\times n}$, define
\begin{align*}
&\mathbb{T}^d_{\rho}=\{\theta\in \mathbb{C}^d/2\pi\mathbb{Z}^d:\ \|{\rm Im } \theta\|\leq \rho\},\\
&\mathbf{B}_r(z)=\{x\in\mathbb{C}^n:\ \|x-z\|\leq r\}, \quad \mathbf{B}_r=\mathbf{B}_r(0),\\
&\mathrm{B}_r(A)=\{C\in\mathbb{R}^{n\times n}:\ \|C-A\|\leq r\}.
\end{align*}
For a real analytic function $f:\mathbb{T}^d_{\rho}\rightarrow\mathbb{C}^n$,  we denote the norm $$\|f\|_{\rho}=\sup\limits_{\theta\in\mathbb{T}^d_{\rho}}\|f(\theta)\|.$$

Now fix $\rho, r>0$ such that $f(\cdot,\cdot)$ and $g(\cdot,\cdot,\varepsilon)$ are analytic on $\mathbb{T}^d_{2\rho}\times\mathbf{B}_{2r}(x_0)$ for each $\varepsilon\in(0,\varepsilon_0)$. Furthermore, assume that $g(\theta,x,\cdot)$ and $\partial_x g(\theta,x,\cdot)$ are Lipschitz continuous on $(0,\varepsilon_0)$ uniformly for $(\theta,x)\in \mathbb{T}^d_{2\rho}\times\mathbf{B}_{2r}(x_0)$. Clearly, $g_1(\cdot,\cdot,\epsilon)$ is also analytic on $\mathbb{T}^d_{2\rho}\times\mathbf{B}_{2r}(x_0)$ for each $\epsilon\in(0,\epsilon_0)$. $g_1(\theta,x,\cdot)$ and $\partial_x g_1(\theta,x,\cdot)$ are also Lipschitz continuous on $(0,\epsilon_0)$ uniformly for $(\theta,x)\in \mathbb{T}^d_{2\rho}\times\mathbf{B}_{2r}(x_0)$.
Let $M=\sup\limits_{x\in\mathbf{B}_{2r}(x_0)}\|f(\cdot,x)\|_{2\rho}$.

The following fundamental lemma ensures that system \eqref{e2.11} can be transformed into a perturbed form of the averaged system.
\begin{lemma}\label{lem1}
Under the hypotheses of Theorem \ref{th1.2}, there exist a real analytic quasi-periodic function $u(t,x)$ with frequency vector $\omega$, defined for $(\omega t,x)\in \mathbb{T}^d_{\rho}\times\mathbf{B}_{r}(x_0)$, and a constant $0<\epsilon_1<\epsilon_0$ such that for each $\epsilon \in (0, \epsilon_1)$,   the change of variables
$$x=y+\epsilon^c u(t,y),\ \ \ (\omega t,y)\in \mathbb{T}^d_{\rho}\times\mathbf{B}_{r}(x_0)$$
transforms \eqref{e2.11} into
\begin{align}\label{2.3}
\dot{y}=\epsilon^c\overline{f}(y)+\epsilon^{c+1} g_2( t,y,\epsilon).
\end{align}
Here, $g_2$ is continuous, quasi-periodic in $t$ with frequency $\omega$, and real analytic on $\mathbb{T}^d_{\rho}\times\mathbf{B}_{r}(x_0)$ for each $\epsilon\in(0,\epsilon_1)$. Moreover, $g_2(t,y,\cdot)$ and $\partial_y g_2(t,y,\cdot)$ are Lipschitz continuous on $(0,\epsilon_1)$ uniformly for $(\omega t,y)\in \mathbb{T}^d_{\rho}\times\mathbf{B}_{r}(x_0)$.
\end{lemma}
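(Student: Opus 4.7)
The strategy is the standard one-step averaging normal form: pick $u$ to kill the oscillatory part of $f$ at order $\varepsilon$, and absorb everything else into the $\varepsilon^2$ remainder. Writing $\theta=\omega t$ and differentiating $x=y+\varepsilon u(\theta,y)$ along the flow, one gets
\begin{equation*}
(\mathrm{I}+\varepsilon\,\partial_y u(\theta,y))\dot y
=\varepsilon f(\theta,y+\varepsilon u)+\varepsilon^2 g(\theta,y+\varepsilon u,\varepsilon)-\varepsilon\,(\omega\cdot\partial_\theta u)(\theta,y).
\end{equation*}
To cancel the $\mathcal{O}(\varepsilon)$ time-dependent part, one imposes the homological equation
\begin{equation*}
\omega\cdot\partial_\theta u(\theta,y)=f(\theta,y)-\overline{f}(y),\qquad (\theta,y)\in\mathbb{T}^d_{2\rho}\times\mathbf{B}_{2r}(x^*),
\end{equation*}
which has zero average in $\theta$, so the Fourier expansion $f(\theta,y)=\sum_{k}\hat f_k(y)e^{\mathrm{i}\langle k,\theta\rangle}$ gives the formal solution
\begin{equation*}
u(\theta,y)=\sum_{k\in\mathbb{Z}^d\setminus\{0\}}\frac{\hat f_k(y)}{\mathrm{i}\langle k,\omega\rangle}\,e^{\mathrm{i}\langle k,\theta\rangle}.
\end{equation*}

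The first step is to show this series defines a real analytic function on the reduced strip $\mathbb{T}^d_\rho\times\mathbf{B}_{2r}(x^*)$. This is where the Diophantine condition \eqref{DC} enters: using $|\langle k,\omega\rangle|\geq\gamma|k|^{-\tau}$ together with the exponential decay $\|\hat f_k\|_{\mathbf{B}_{2r}(x^*)}\leq Me^{-2\rho|k|}$ coming from analyticity of $f$ on $\mathbb{T}^d_{2\rho}$, a standard small-divisor estimate (one of the appendix lemmas) yields bounds of the form $\|u\|_{\rho}\leq C\gamma^{-1}\rho^{-(\tau+d)}M$ and a similar bound for $\partial_y u$ via Cauchy's estimate on $\mathbf{B}_{2r}(x^*)\supset\mathbf{B}_{r}(x^*)$.

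The second step is to make $x=y+\varepsilon u(\theta,y)$ a legitimate change of variables. Choosing $\varepsilon_1$ so small that $\varepsilon_1\|u\|_\rho<r/2$ and $\varepsilon_1\|\partial_y u\|<1/2$ on $\mathbb{T}^d_\rho\times\mathbf{B}_{r}(x^*)$, the map is a near-identity analytic diffeomorphism of a neighborhood onto $\mathbf{B}_{r}(x^*)$ by the analytic implicit function theorem, and $\mathrm{I}+\varepsilon\,\partial_y u$ is invertible with $\|(\mathrm{I}+\varepsilon\partial_y u)^{-1}\|\leq 2$. Substituting back, the equation reads
\begin{equation*}
\dot y=(\mathrm{I}+\varepsilon\,\partial_y u)^{-1}\!\left[\varepsilon\,\overline{f}(y)+\varepsilon\bigl(f(\theta,y+\varepsilon u)-f(\theta,y)\bigr)+\varepsilon^2 g(\theta,y+\varepsilon u,\varepsilon)\right].
\end{equation*}
The bracket equals $\varepsilon\,\overline{f}(y)+\varepsilon^2 R(\theta,y,\varepsilon)$ where $R$ collects the Taylor remainder $\varepsilon^{-1}(f(\theta,y+\varepsilon u)-f(\theta,y))=\int_0^1\partial_x f(\theta,y+s\varepsilon u)u\,\mathrm{d}s$ together with $g$; expanding $(\mathrm{I}+\varepsilon\partial_y u)^{-1}=\mathrm{I}-\varepsilon\partial_y u(\mathrm{I}+\varepsilon\partial_y u)^{-1}$ produces the desired form $\dot y=\varepsilon\,\overline{f}(y)+\varepsilon^2 g_1(t,y,\varepsilon)$ with $g_1$ analytic on $\mathbb{T}^d_\rho\times\mathbf{B}_{r}(x^*)$ and quasi-periodic with frequency $\omega$.

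The main technical issue is the Lipschitz dependence of $g_1$ and $\partial_y g_1$ on $\varepsilon$. Since $u$ is independent of $\varepsilon$, the only $\varepsilon$-dependence in $g_1$ comes through the explicit factors, through $g(\theta,y+\varepsilon u,\varepsilon)$, and through the Neumann series $(\mathrm{I}+\varepsilon\partial_y u)^{-1}$. The first two inherit Lipschitz continuity from the assumptions on $g$ (together with analyticity in $y$ and a Cauchy estimate for $\partial_y g$), while the third is a real-analytic function of $\varepsilon$ on $[0,\varepsilon_1)$ and hence Lipschitz. Combining these with the product and chain rules, and using Cauchy's estimate on the strictly smaller ball $\mathbf{B}_{r}(x^*)\subset\mathbf{B}_{2r}(x^*)$ to pass derivatives through, yields uniform Lipschitz bounds in $\varepsilon$ for both $g_1$ and $\partial_y g_1$, completing the proof. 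The hard part will be bookkeeping these Lipschitz constants while keeping the analyticity losses under control; everything else is a direct computation.
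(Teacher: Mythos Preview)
Your proposal is correct and follows essentially the same approach as the paper: solve the homological equation $\omega\cdot\partial_\theta u=f-\overline f$ by Fourier series, use the Diophantine condition together with the exponential decay of Fourier coefficients (and Lemma~\ref{ALe2}) to bound $u$ and, via Cauchy's estimate, $\partial_y u$ on the shrunken domain, then invert $I+\varepsilon\,\partial_y u$ for small $\varepsilon$ and read off $g_1$ from the substitution. The paper writes out $g_1$ explicitly term by term and uses the Neumann expansion $(I+\varepsilon\,\partial_y u)^{-1}=I-\varepsilon\,\partial_y u+\mathcal O(\varepsilon^2)$ to see that $g_1(t,y,0)$ is well defined, then argues Lipschitz continuity exactly as you do, so there is no meaningful difference in strategy.
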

\begin{proof}
Let  $\widetilde{f}(t,x)=f(\omega t,x)-\overline{f}(x)$.
Consider the equation
$$\partial_t u(t,x)=\widetilde{f}(t,x),\ \ (\omega t,x)\in \mathbb{T}^d_{\rho}\times\mathbf{B}_{2r}(x_0).$$
Using the Fourier expansions
$$u(t,x)=\sum\limits_{k\in \mathbb{Z}^d}u_k(x)e^{\sqrt{-1}\langle k,\omega\rangle t},\ \ \ f(\omega t,x)=\sum\limits_{k\in \mathbb{Z}^d}f_k(x)e^{\sqrt{-1}\langle k,\omega\rangle t},$$
and comparing coefficients for each $k$, we obtain
$$u_k(x)=\frac{f_k(x)}{\sqrt{-1}\langle k,\omega\rangle},\ \ \ \ \ k\in \mathbb{Z}^d\backslash\{0\}.$$
By the Diophantine condition \eqref{DC} and Cauchy's formula,
$$\|u_k(x)\|\leq\frac{1}{\gamma}|k|^\tau\|f_k(x)\|\leq\frac{1}{\gamma}\|f(\cdot,x)\|_{2\rho}e^{-2\rho |k|}|k|^\tau$$
for $k\in\mathbb{Z}^d\backslash\{0\}$ and $x\in\mathbf{B}_{2r}(x_0)$.
Let $u_0(x)=0$.
It follows that
\begin{align}\label{02.4}
\|u(\cdot,x)\|_{\rho}&\leq\sum\limits_{k\in\mathbb{Z}^d\backslash\{0\}}\|u_k(x)\|e^{\rho |k|}\nonumber\\
&\leq \frac{1}{\gamma}\|f(\cdot,x)\|_{2\rho}\sum\limits_{k\in\mathbb{Z}^d\backslash\{0\}}e^{-\rho |k|}|k|^\tau\nonumber\\
&\leq M M_0
\end{align}
for all $x\in\mathbf{B}_{2r}(x_0)$, where $M_0=\frac{1}{\gamma}\varpi(\tau,\rho)$ and
\begin{align}\label{2.5}
\varpi(\tau,\rho)=\sum\limits_{k\in\mathbb{Z}^d\backslash\{0\}}e^{-\rho |k|}|k|^\tau.
\end{align}
Lemma \ref{ALe2} in the appendix ensures that $\varpi(\tau,\rho)$ is well defined.
Moreover, the Cauchy estimate yields that
\begin{align}\label{2.4}
\|\partial_x u(\cdot,x)\|_{\rho}\leq \frac{1}{r}\sup\limits_{x\in\mathbf{B}_{2r}(x_0)}\|u(\cdot,x)\|_{\rho}\leq\frac{M M_0}{r}
\end{align}
for $x\in\mathbf{B}_{r}(x_0)$.
Let $\epsilon_1\leq\left(\frac{r}{2M M_0}\right)^{1/c}$. Then  for $0<\epsilon<\epsilon_1$,
$I_n+\epsilon^c \partial_x u(t,x)$ is invertible, where $I_n$
is the $n\times n$ identity matrix. Applying the change of variables
$$x=y+\epsilon^c u(t,y),\ \ \ (\omega t,y)\in \mathbb{T}^d_{\rho}\times\mathbf{B}_{r}(x_0),$$
we obtain
\begin{align*}
&\left(I_n+\epsilon^c\partial_y u(t,y)\right)\dot{y}+\epsilon^c\partial_t u(t,y)\\
&=\epsilon^c\overline{f}(y)+\epsilon^c\widetilde{f}(t,y)
+\epsilon^c\left(f(\omega t,y+\epsilon^c u(t,y))-f(\omega t,y)\right)\\
&+\epsilon^{c+1} g_1(\omega t,y+\epsilon^c u(t,y),\epsilon).
\end{align*}
Consequently,
\begin{align*}
\dot{y}=\epsilon^c \overline{f}(y)+\epsilon^{c+1} g_2( t,y,\epsilon),
\end{align*}
where
\begin{align*}
g_2( t,y,\epsilon)&=\frac{1}{\epsilon}\left(\left(I_n+\epsilon^c \partial_y u(t,y)\right)^{-1}\overline{f}(y)-\overline{f}(y)\right)\\
&+\frac{1}{\epsilon}\left(I_n+\epsilon^c \partial_y u(t,y)\right)^{-1}\left(f(\omega t,y+\epsilon^c u(t,y))-f(\omega t,y)\right)\\
&+\left(I_n+\epsilon^c \partial_y u(t,y)\right)^{-1}g_1(\omega t,y+\epsilon^c u(t,y),\epsilon).
\end{align*}
By \eqref{2.4}, we have
\begin{align*}
\left(I_n+\epsilon^c \partial_y u(t,y)\right)^{-1}=I_n-\epsilon^c \partial_y u(t,y)+\mathcal{O}(\epsilon^{2c}).
\end{align*}
Since $f$ and $g_1$ are real analytic in $y$, and $g_1$ and $\partial_y g_1$ are Lipschitz continuous in $\epsilon$ uniformly for $(\omega t,x)\in \mathbb{T}^d_{2\rho}\times\mathbf{B}_{2r}(x_0)$, it follows that
$g_2(t,y,\cdot)$ and $\partial_y g_2(t,y,\cdot)$ are  Lipschitz continuous on $(0,\epsilon_1)$ uniformly for $ (\omega t,y)\in \mathbb{T}^d_{\rho}\times\mathbf{B}_{r}(x_0)$.
\end{proof}

Since $\overline{f}(x_0)=0$, by performing Taylor expansion on \eqref{2.3}, we obtain
\begin{align*}
\dot{y}&=\epsilon^c\partial_x\overline{f}(x_0)(y-x_0)+\epsilon^c\hat{f}(y-x_0)+\epsilon^{c+1} g_2( t,x_0,\epsilon)
+\epsilon^{c+1}\partial_x g_2( t,x_0,\epsilon)(y-x_0)\\
&~~+\epsilon^{c+1}\hat{g}_2(t,y-x_0,\epsilon)\\
&=\epsilon^c\left(\partial_x\overline{f}(x_0)+ \epsilon \partial_x g_2( t,x_0,\epsilon)\right)(y-x_0)
+\epsilon^{c+1} g_2( t,x_0,\epsilon)\\
&~~+\epsilon^{c}\left(\hat{f}(y-x_0)+\epsilon\hat{g}_2(t,y-x_0,\epsilon)\right),
\end{align*}
where $\hat{f}(y-x_0)$ and $\hat{g}_2(t,y-x_0,\epsilon)$ consist of quadratic and
higher-order terms in $y-x_0$.
Making the change of variables  $y=z+x_0$, we have
\begin{align}\label{2.6}
\dot{z}&=\epsilon^c\left(A+ \epsilon B(t,\epsilon)\right)z+\epsilon^{c+1} p( t,\epsilon)
+\epsilon^c h(t,z,\epsilon)
\end{align}
for $(\omega t,z,\epsilon)\in \mathbb{T}^d_{\rho}\times\mathbf{B}_{r}\times(0,\epsilon_1)$, where
$A=\partial_x\overline{f}(x_0)$, $B(t,\epsilon)=\partial_x g_2( t,x_0,\epsilon)$,
$p( t,\epsilon)=g_2( t,x_0,\epsilon)$ and $h(t,z,\epsilon)=\hat{f}(z)+\epsilon\hat{g}_2(t,z,\epsilon)$.
Clearly, for each $\epsilon\in(0,\epsilon_1)$, $B(\cdot,\epsilon)$ and $p(\cdot,\epsilon)$ are real analytic on $\mathbb{T}^d_{\rho}$, and $h(\cdot,\cdot,\epsilon)$
is real analytic on $\mathbb{T}^d_{\rho}\times\mathbf{B}_{r}$. Furthermore, the Lipschitz continuity of $g_2$ and $\partial_x g_2$ implies that $B(t,\cdot)$, $p(t,\cdot)$, and $h(t,z,\cdot)$ are  Lipschitz continuous on $(0,\epsilon_1)$ uniformly for $(\omega t,z)\in\mathbb{T}^d_{\rho}\times\mathbf{B}_{r}$.
We denote their Lipschitz constants by
 $\mathcal{L}(B)$, $\mathcal{L}(p)$, and $\mathcal{L}(h)$, respectively.

\subsection{The KAM iterations}
We will take infinite iterations to make the term $\epsilon^{c+1} p( t,\epsilon)$ in \eqref{2.6} approach $0$.
Each iteration consists of two processes: one to reduce  $\epsilon^{c+1} p( t,\epsilon)$ and the other to reduce $\epsilon B(t,\epsilon)$.

Let  $r_0=r$, $\rho_0=\rho$, $0<c_0<\frac{1}{4}$, $1<\kappa<2$, and define the sequences:
$\rho_{m+1}=\rho_m-\frac{\rho_0}{4(m+1)^2}$, $\sigma_m=\rho_m-\frac{\rho_0}{8(m+1)^2}$,
$\nu_m=\frac{c_0\rho_0}{4(m+1)^2}$, and $\tau_m=\tau\kappa^m$ for $m\in\mathbb{N}$.

Set $A_0=A$, $B_0(t,\epsilon)=B(t,\epsilon)$, $p_0( t,\epsilon)=p( t,\epsilon)$ and $h_0(t,z,\epsilon)=h(t,z,\epsilon)$.
Let $C_0$ be the matrix such that $C_0^{-1}A_0C_0=\diag\{\lambda^{(1)},\dots,\lambda^{(n)} \}$, and let $\beta_0=\max\{\|C_0\|, \|C_0^{-1}\| \}$.
Choose $0<\mu<\frac{1}{2}\min\limits_{\substack{1\leq i,j\leq n\\ i\neq j}}\{|\lambda^{(i)}|,\ |\lambda^{(i)}-\lambda^{(j)}|\}$  and set
$\alpha=\frac{2\mu}{(3n-1)\beta_0^2}$.

Assume that after $m$ iterations, system \eqref{2.6} becomes
\begin{align}\label{2.7}
\dot{z}_m&=\epsilon^c\left(A_m+ \epsilon^{2^m} B_m(t,\epsilon)\right)z_m+\epsilon^{2^m+c} p_m( t,\epsilon)
+\epsilon^c h_m(t,z_m,\epsilon),
\end{align}
where $A_m\in\mathrm{{B}}_\alpha(A)$, $B_m(t,\epsilon)$, $p_m( t,\epsilon)$ and $h_m(t,z,\epsilon)$ are continuous and quasi-periodic in $t$ with frequency $\omega$, $B_m(t,\epsilon)$ and $p_m( t,\epsilon)$ are real analytic on $\mathbb{T}^d_{\rho_m}$ and $h_m(t,z_m,\epsilon)$
consists of quadratic and higher-order terms in $z_m$ and real analytic on $\mathbb{T}^d_{\rho_m}\times\mathbf{B}_{r_m}$ for each $\epsilon\in(0,\epsilon_1)$. Moreover, we also assume the average $\overline{B}_m(\epsilon)=0$.

By Lemma \ref{ALe1} in the appendix and Gerschgorin's theorem (see \cite{W}), there exists a constant $\mu^*>0$, which is independent of $m$, such that the eigenvalues $\lambda_m^{(1)},\dots,\lambda_m^{(n)}$ of the matrix $A_m$ satisfy
\begin{align}\label{a2.16}
\mu<|\lambda_m^{(i)}|<\mu^*,\ \ \mu<|\lambda_m^{(i)}-\lambda_m^{(j)}|<\mu^*,
\end{align}
for $1\leq i,j\leq n$ and $i\neq j$.
Suppose that
\begin{align}\label{2.8}
|\sqrt{-1}\langle k,\omega\rangle-\epsilon^c\lambda_m^{(i)}|\geq\frac{\gamma}{2}|k|^{-\tau_m}e^{-\nu_m|k|},\ \ \forall\ k\in\mathbb{Z}^d\backslash\{0\},\ \ 1\leq i\leq n.
\end{align}

To construct a transformation that reduces $\epsilon^{2^m+c} p_m( t,\epsilon)$, we require the following lemma.
\begin{lemma}
Assume that the Diophantine condition \eqref{2.8} holds.  Then for sufficiently small $\epsilon_1>0$, the equation
\begin{align}\label{2.9}
\dot{u}_m=\epsilon^c A_m u_m+\epsilon^{2^m+c} p_m( t,\epsilon)
\end{align}
admits a unique real analytic quasi-periodic solution $u_m(t,\epsilon)$ for $(\omega t, \epsilon)\in\mathbb{T}^d_{\sigma_m}\times(0,\epsilon_1)$ such that
\begin{align}\label{02.10}
\|u_m\|_{\sigma_{m}}\leq \epsilon^{2^m} \|p_m\|_{\rho_m}L_{1,m},
\end{align}
where $L_{1,m}=4\beta_0^2\left(\frac{1}{\mu}+\frac{2\epsilon^c}{\gamma}\varpi(\tau_m,\nu_m)\right)$. Moreover, $u_m(t,\epsilon)$ is Lipschitz continuous in $\epsilon$, with Lipschitz constant
\begin{align}\label{02.11}
\mathcal{L}(u_m)\leq E_{1,m}\epsilon_1^{2^m}\|p_{m}\|_{\rho_m,\epsilon_1}\left(\mathcal{L}(A_m)+1\right)+E_{2,m}\mathcal{L}\left(\epsilon^{2^m}p_{m}\right),
\end{align}
where
$E_{1,m}=\tilde{M}_1\varpi(2\tau_m,\rho_m-2\nu_m-\sigma_m),$ $E_{2,m}=\tilde{M}_2\varpi(\tau_m,\rho_m-\nu_m-\sigma_m)$,
$\tilde{M}_1$ and $\tilde{M}_2$ are positive constants independent of $m$, and $\|p_m\|_{\rho_m, \epsilon_1} = \sup\limits_{\epsilon \in (0, \epsilon_1)} \|p_{m}(\cdot, \epsilon)\|_{\rho_m}$.
\end{lemma}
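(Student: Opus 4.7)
The plan is to solve the linear equation \eqref{2.9} by expanding in Fourier series, invoking the Diophantine condition \eqref{2.8} to invert the resulting constant-coefficient operator acting on each Fourier mode, and then carefully bounding the resulting series on a slightly thinner strip $\mathbb{T}^d_{\sigma_m}$.

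Expanding $p_m(t,\varepsilon)=\sum_{k\in\mathbb{Z}^d}p_{m,k}(\varepsilon)e^{\sqrt{-1}\langle k,\omega\rangle t}$ and seeking $u_m$ in the same form, one matches Fourier coefficients to obtain
\[
\bigl(\sqrt{-1}\langle k,\omega\rangle I_n-\varepsilon A_m\bigr)u_{m,k}(\varepsilon)=\varepsilon^{2^m+1}p_{m,k}(\varepsilon).
\]
For $k=0$ this reads $-\varepsilon A_m u_{m,0}=\varepsilon^{2^m+1}p_{m,0}$, so $u_{m,0}=-\varepsilon^{2^m}A_m^{-1}p_{m,0}$, and I bound $\|A_m^{-1}\|$ by diagonalizing $A_m$ (possible because of \eqref{a2.16}): since $A_m\in\mathrm{B}_\alpha(A)$, a standard perturbation argument produces a diagonalizer $C_m$ with $\|C_m\|,\|C_m^{-1}\|\leq 2\beta_0$, and then $\|A_m^{-1}\|\leq 4\beta_0^2/\mu$, which explains the first summand in $L_{1,m}$. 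For $k\neq 0$ I conjugate to the diagonal form and read off $|u_{m,k,i}|\leq\varepsilon^{2^m+1}|\langle k,\omega\rangle-\sqrt{-1}\varepsilon\lambda_{m,i}|^{-1}|p_{m,k,i}|$; invoking \eqref{2.8} gives a factor $\frac{2}{\gamma}|k|^{\tau_m}e^{\nu_m|k|}$, and combined with the Fourier decay $|p_{m,k}|\leq\|p_m\|_{\rho_m}e^{-\rho_m|k|}$ plus the factor $e^{\sigma_m|k|}$ from restricting to $\mathbb{T}^d_{\sigma_m}$, the series is majorized by $\|p_m\|_{\rho_m}\varpi(\tau_m,\nu_m)$ after using $\rho_m-\sigma_m-\nu_m>\nu_m$ (which holds thanks to $c_0<1/4$). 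Summing the $k=0$ and $k\neq 0$ contributions and using the $4\beta_0^2$ conjugation factor yields \eqref{02.10}, and analyticity on $\mathbb{T}^d_{\sigma_m}$ and uniqueness follow from absolute convergence together with the fact that \eqref{2.8} forbids resonances.

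For the Lipschitz estimate \eqref{02.11} I differentiate (or take difference quotients in) $\varepsilon$ of the Fourier coefficients
\[
u_{m,k}(\varepsilon)=\varepsilon^{2^m+1}\bigl(\sqrt{-1}\langle k,\omega\rangle I_n-\varepsilon A_m(\varepsilon)\bigr)^{-1}p_{m,k}(\varepsilon).
\]
Differentiating the resolvent produces a factor of the form $(\sqrt{-1}\langle k,\omega\rangle I-\varepsilon A_m)^{-1}\bigl(A_m+\varepsilon\partial_\varepsilon A_m\bigr)(\sqrt{-1}\langle k,\omega\rangle I-\varepsilon A_m)^{-1}$, and applying \eqref{2.8} twice produces the square $|k|^{2\tau_m}e^{2\nu_m|k|}$, which together with the Fourier decay $e^{-\rho_m|k|}$ and the strip factor $e^{\sigma_m|k|}$ is summable to $\varpi(2\tau_m,\rho_m-2\nu_m-\sigma_m)$; this gives the $E_{1,m}$-term, with the explicit $\mathcal{L}(A_m)+1$ absorbing both $\partial_\varepsilon A_m$ and the bare $A_m$ factor, multiplied by $\varepsilon_1^{2^m}\|p_m\|_{\rho_m}$. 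The Lipschitz variation of $p_m$ itself contributes only a single resolvent and thus $\varpi(\tau_m,\rho_m-\nu_m-\sigma_m)$, giving the $E_{2,m}$-term against $\mathcal{L}(\varepsilon^{2^m}p_m)$. The $k=0$ mode is treated analogously, using Lipschitz continuity of $A_m^{-1}$ in $\varepsilon$ derived from $\mathcal{L}(A_m)$ and the uniform lower bound $\mu$ on $|\lambda_{m,i}|$; these contributions are absorbed into the same constants $E_{1,m},E_{2,m}$.

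The main obstacle is the bookkeeping: one must choose the intermediate radii carefully so that $\rho_m-2\nu_m-\sigma_m>0$ (which is where the constraint $c_0<\frac14$ is used) while still leaving enough room for the weights $\varpi(\tau_m,\cdot)$ and $\varpi(2\tau_m,\cdot)$ to be finite, and to verify that the resulting constants $\tilde M_1,\tilde M_2$ can be taken independent of $m$. This is essentially an application of Lemma \ref{ALe2} together with the quantitative choice of $\sigma_m,\rho_m,\nu_m,\tau_m$ fixed at the beginning of the subsection; once the domains are set up, the estimates above close without further difficulty.
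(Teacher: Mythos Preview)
Your proposal is correct and follows essentially the same approach as the paper: Fourier expansion, diagonalization of $A_m$ via a conjugator $C_m$ with $\|C_m\|,\|C_m^{-1}\|\le 2\beta_0$ (this is the paper's Lemma~\ref{ALe1}), use of \eqref{2.8} for $k\neq 0$ and of $|\lambda_{m,i}|>\mu$ for $k=0$, and the passage from $\rho_m$ to $\sigma_m$ with the observation $\rho_m-\sigma_m-\nu_m>\nu_m$ to land on $\varpi(\tau_m,\nu_m)$. For the Lipschitz part the paper works componentwise after diagonalization and tracks the $\varepsilon$-dependence of $C_m$ explicitly via $\kappa_1\mathcal{L}(A_m)$, whereas you apply the resolvent identity to the matrix $(\sqrt{-1}\langle k,\omega\rangle I-\varepsilon A_m)^{-1}$ directly; these are equivalent, and both produce the squared small-divisor factor leading to $\varpi(2\tau_m,\rho_m-2\nu_m-\sigma_m)$ for the $\mathcal{L}(A_m)$ contribution and the single factor $\varpi(\tau_m,\rho_m-\nu_m-\sigma_m)$ for the $\mathcal{L}(\varepsilon^{2^m}p_m)$ contribution.
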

\begin{proof}
Since $A_m\in\mathrm{{B}}_\alpha(A)$, by Lemma \ref{ALe1} in the appendix there exists an
$n\times n$ matrix $C_m$ such that
$$C_m^{-1}A_mC_m=D_m=\diag\{\lambda_m^{(1)},\dots,\lambda_m^{(n)}\},$$
and $\max\{\|C_m\|, \|C^{-1}_m\| \}\leq 2\beta_0$.
Making the change of variables $u_m=C_my$, equation \eqref{2.9} becomes
\begin{align}\label{a2.17}
\dot{y}=\epsilon^c D_my+\epsilon^{2^m+c} q_m( t,\epsilon),
\end{align}
where $q_m( t,\epsilon)=C_m^{-1}p_m( t,\epsilon)$. Let $y=(y^{(j)})_{1\leq j\leq n}$, $q_m=(q_m^{(j)})_{1\leq j\leq n}$, and express them as Fourier series
$$y^{(j)}(t)=\sum\limits_{k\in \mathbb{Z}^d}y^{(j)}_ke^{\sqrt{-1}\langle k,\omega\rangle t},\ \ \ q_m^{(j)}(t,\epsilon)=\sum\limits_{k\in \mathbb{Z}^d}q_{m,k}^{(j)}e^{\sqrt{-1}\langle k,\omega\rangle t}.$$
Substituting these series into \eqref{a2.17} and comparing the coefficients yields
\begin{align}\label{a2.18}
y^{(j)}_k=\frac{\epsilon^{2^m+c}q_{m,k}^{(j)}}{\sqrt{-1}\langle k,\omega\rangle-\epsilon^c\lambda_{m}^{(j)}}, \ \ \ \ k\in \mathbb{Z}^d.
\end{align}
It follows that
$|y_0^{(j)}|\leq\epsilon^{2^m}\frac{\|q_m^{(j)}\|_{\rho_m}}{\mu}$, and $$|y^{(j)}_k|\leq\epsilon^{2^m+c}\|q_m^{(j)}\|_{\rho_m}\frac{2|k|^{\tau_m}}{\gamma}e^{-(\rho_m-\nu_m)|k|},\ \  k\in\mathbb{Z}^d\backslash\{0\}.$$
Therefore,
\begin{align*}
\|y^{(j)}\|_{\sigma_m}&\leq \sum\limits_{k\in\mathbb{Z}^d}|y^{(j)}_k|e^{\sigma_m|k|}\\
&\leq\epsilon^{2^m}\|q_m^{(j)}\|_{\rho_m}\left(\frac{1}{\mu}+\frac{2\epsilon^c}{\gamma} \sum\limits_{k\in\mathbb{Z}^d\backslash\{0\}}|k|^{\tau_m}e^{-(\rho_m-\nu_m-\sigma_m)|k|}\right).
\end{align*}
Since $\|q_m\|_{\rho_m}\leq \|C_m^{-1}\|\|p_m\|_{\rho_m}$, by Lemma \ref{ALe1} in the appendix, we obtain
\begin{align*}
\|u_m\|_{\sigma_m}\leq \|C_m\|\|y\|_{\sigma_m}\leq \epsilon^{2^m} \|p_m\|_{\rho_m}L_{1,m}.
\end{align*}

It remains to estimate the Lipschitz constant of $u_m$. From \eqref{a2.18}, Lemma \ref{ALe1} and Lemma \ref{ALe4}, we have
\begin{align*}
&\mathcal{L}\left(y_0^{(j)}\right)=\mathcal{L}\left(\frac{\epsilon^{2^m}q_{m,0}^{(j)}}{\lambda_m^{(j)}}\right)\\
&\leq \mathcal{L}\left(\frac{1}{\lambda_m^{(j)}}\right)\epsilon_1^{2^m}|q_{m,0}^{(j)}|_{\epsilon_1}+
\left|\frac{1}{\lambda_m^{(j)}}\right|_{\epsilon_1}\mathcal{L}\left(\epsilon^{2^m}q_{m,0}^{(j)}\right)\\
&\leq\frac{\epsilon_1^{2^m}|q_{m,0}^{(j)}|_{\epsilon_1}}{\mu^2}\mathcal{L}(\lambda_m^{(j)})+\frac{1}{\mu}\mathcal{L}\left(\epsilon^{2^m}q_m^{(j)}\right)\\
&\leq\frac{2\beta_0\kappa_2\epsilon_1^{2^m}\|p_{m}\|_{\rho_m,\epsilon_1}}{\mu^2}\mathcal{L}(A_m)+\frac{1}{\mu}
\left(2\beta_0\mathcal{L}\left(\epsilon^{2^m}p_{m}\right)+\kappa_1\epsilon_1^{2^m}\|p_{m}\|_{\rho_m,\epsilon_1}\mathcal{L}(A_m)\right)\\
&=\frac{2\beta_0\kappa_2+\mu\kappa_1}{\mu^2}\epsilon_1^{2^m}\|p_{m}\|_{\rho_m,\epsilon_1}\mathcal{L}(A_m)+\frac{2\beta_0}{\mu}
\mathcal{L}(\epsilon^{2^m}p_{m}),
\end{align*}
and
\begin{align*}
\mathcal{L}\left(y^{(j)}_k\right)&\leq\frac{4\epsilon_1^{2^m+c}|q_{m,k}^{(j)}|_{\epsilon_1}}{(\gamma|k|^{-\tau_m}e^{-\nu_m|k|})^2}\mathcal{L}(\epsilon^c\lambda_m^{(j)})
+\frac{2}{\gamma|k|^{-\tau_m}e^{-\nu_m|k|}}\mathcal{L}\left(\epsilon^{2^m+c}q_{m,k}^{(j)}\right)\\
&\leq 8\beta_0\gamma^{-2}\epsilon_1^{2^m+c}|k|^{2\tau_m}e^{-(\rho_m-2\nu_m)|k|}\|p_{m}\|_{\rho_m,\epsilon_1}
(\epsilon_1^c\kappa_2\mathcal{L}(A_m)+\mu^*c\epsilon_1^{c-1})\\
&+4\beta_0\gamma^{-1}|k|^{\tau_m}e^{-(\rho_m-\nu_m)|k|}
\left(c\epsilon_1^{2^m+c-1}\|p_{m}\|_{\rho_m,\epsilon_1}+\epsilon_1^c\mathcal{L}\left(\epsilon^{2^m}p_{m}\right)
\right)\\
&+2\gamma^{-1}\kappa_1|k|^{\tau_m}e^{-(\rho_m-\nu_m)|k|}\epsilon_1^{2^m+c}\|p_{m}\|_{\rho_m,\epsilon_1}\mathcal{L}(A_m),
\end{align*}
for $k\in\mathbb{Z}^d\backslash\{0\}$, where $|\cdot|_{\epsilon_1}$ denotes the sup norm on $(0, \epsilon_1)$. For $\epsilon_1<\frac{1}{c}$, we obtain
\begin{align*}
&\mathcal{L}(u_m)\leq2\beta_0\mathcal{L}(y)+\|y\|_{\sigma_m,\epsilon_1}\kappa_1\mathcal{L}(A_m)\\
&\leq2\beta_0\max\limits_{1\leq j\leq n}\sum\limits_{k\in\mathbb{Z}^d}\mathcal{L}(y^{(j)}_k)e^{\sigma_m|k|}+\|y\|_{\sigma_m,\epsilon_1}\kappa_1\mathcal{L}(A_m)\\
&\leq2\beta_0\max\limits_{1\leq j\leq n}\mathcal{L}(y^{(j)}_0)\\
&+ 16\beta_0^2\gamma^{-2}\epsilon_1^{2^m+c}\varpi(2\tau_m,\rho_m-2\nu_m-\sigma_m)\|p_{m}\|_{\rho_m,\epsilon_1}
(\epsilon_1^c\kappa_2\mathcal{L}(A_m)+\mu^*)\\
&+8\beta_0^2\gamma^{-1}\varpi(\tau_m,\rho_m-\nu_m-\sigma_m)
\left(\epsilon_1^{2^m}\|p_{m}\|_{\rho_m,\epsilon_1}+\epsilon_1\mathcal{L}\left(\epsilon^{2^m}p_{m}\right)
\right)\\
&+4\beta_0\gamma^{-1}\kappa_1\varpi(\tau_m,\rho_m-\nu_m-\sigma_m)\epsilon_1^{2^m+c}\|p_{m}\|_{\rho_m,\epsilon_1}\mathcal{L}(A_m)+\|y\|_{\sigma_m,\epsilon_1}\kappa_1\mathcal{L}(A_m)\\
&\leq E_{1,m}\epsilon_1^{2^m}\|p_{m}\|_{\rho_m,\epsilon_1}\left(\mathcal{L}(A_m)+1\right)+E_{2,m}\mathcal{L}\left(\epsilon^{2^m}p_{m}\right).
\end{align*}
Here we use the fact $\varpi(\tau_m,\rho_m-\nu_m-\sigma_m)\leq \varpi(2\tau_m,\rho_m-2\nu_m-\sigma_m)$.
\end{proof}

\begin{lemma}\label{lem3}
Consider system \eqref{2.7}. Assume that
$$\sup\limits_{z_m\in\mathbf{B}_{r_m}}\|\partial_{x}^2h_m(\cdot,z_m,\epsilon)\|_{\rho_m}\leq K_m$$
for some constant $K_m>0$
and the Diophantine condition \eqref{2.8} holds. Let $u_m$ be the unique real analytic quasi-periodic solution of \eqref{2.9}.
Then the change of variables $z_m=y_m+u_m$
transforms system \eqref{2.7} into
\begin{align}\label{2.11}
\dot{y}_m&=\epsilon^c\left(A_m^*+ \epsilon^{2^m} B_m^*(t,\epsilon)\right)y_m+\epsilon^{2^{m+1}+c} p_m^*(t,\epsilon)
+\epsilon^c h_m^*(t,y_m,\epsilon),
\end{align}
where the average $ \overline{B}_m^*(\epsilon)=0$, $A_m^*$, $\epsilon^{2^m}B_m^*$, $\epsilon^{2^{m+1}}p_m^*$ and $ h_m^*(t,y_m,\epsilon)$ are Lipschitz continuous with respect to $\epsilon$, and the following estimates hold.
\begin{enumerate}
\item[{\rm(1)}]$\|A_m^*\|\leq\|A_m\|+\epsilon^{2^m}K_mL_{1,m}\|p_m\|_{\rho_m}$.
\item[{\rm(2)}] $\|B_m^*\|_{\sigma_m}\leq\|B_m\|_{\rho_m}+2K_mL_{1,m}\|p_m\|_{\rho_m}$.
\item[{\rm(3)}] $\|p_m^*\|_{\sigma_m}\leq \frac{K_mL_{1,m}^2}{2}\|p_m\|_{\rho_m}^2+L_{1,m}\|B_m\|_{\rho_m}\|p_m\|_{\rho_m}$.
\item[{\rm(4)}] $\|\partial_x^2h_m^*(\cdot,y_m,\epsilon)\|_{\sigma_m}\leq K_m$  $\forall\ y_m\in \mathbf{B}_{r_m^*}$,
 where $r_m^*=r_m-\|u_m\|_{\sigma_m}$.
 \item[{\rm(5)}] $\mathcal{L}(A_m^*)\leq\mathcal{L}(A_m)+\eta_m$,
where \begin{align*}
\eta_m&=E_{1,m}\epsilon_1^{2^m}\|p_{m}\|_{\rho_m,\epsilon_1}
\left(\mathcal{L}(A_m)+1\right)+E_{2,m}\mathcal{L}\left(\epsilon^{2^m}p_{m}\right)\\
&+\Delta_1\left(\frac{\|u_m\|_{\sigma_m,\epsilon_1}}{r_m}\right)\mathcal{L}(h_m)\frac{\|u_m\|_{\sigma_m,\epsilon_1}}{r_m},
\end{align*}
with the function $\Delta_1$ defined in Lemma \ref{ALe3}.
\item[{\rm(6)}] $\mathcal{L}\left(\epsilon^{2^m}B_m^*\right)\leq\mathcal{L}\left(\epsilon^{2^m}B_m\right)+2\eta_m$.
 \item[{\rm(7)}] \begin{align*}
\mathcal{L}\left(\epsilon^{2^{m+1}}p_m^*\right)&\leq \mathcal{L}\left(\epsilon^{2^m}B_m\right)\|u_m\|_{\sigma_m,\epsilon_1}+\left(\epsilon_1^{2^m}\|B_m\|_{\rho_m,\epsilon_1}
+K_m\|u_m\|_{\sigma_m,\epsilon_1}\right)\mathcal{L}(u_m)\\
&+\Delta_2\left(\frac{\|u_m\|_{\sigma_m,\epsilon_1}}{r_m}\right)\mathcal{L}(h_m)\frac{\|u_m\|^2_{\sigma_m,\epsilon_1}}{r^2_m},
\end{align*}
where the function $\Delta_2$ is given in Lemma \ref{ALe3}.
 \item[{\rm(8)}] $\mathcal{L}\left(h_m^*\right)\leq 3K_mr_m\mathcal{L}(u_m)+2\mathcal{L}(h_m)+ \Delta_1\left(\frac{\|u_m\|_{\sigma_m,\epsilon_1}}{r_m}\right)\mathcal{L}(h_m)\|u_m\|_{\sigma_m,\epsilon_1}.$
\end{enumerate}
\end{lemma}
\begin{proof}
Applying the change of variables $z_m=y_m+u_m(t,\epsilon)$ in system \eqref{2.7}, we obtain
\begin{align*}
\dot{y}_m+\dot{u}_m&=\epsilon^c\left(A_m+ \epsilon^{2^m} B_m(t,\epsilon)\right)(y_m+u_m)+\epsilon^{2^m+c} p_m(t,\epsilon)\\
&+\epsilon^c h_m(t,y_m+u_m,\epsilon).
\end{align*}
Using \eqref{2.9},
\begin{align}\label{2.12}
\dot{y}_m&=\epsilon^c\left(A_m+ \epsilon^{2^m} B_m(t,\epsilon)+\partial_xh_m(t,u_m,\epsilon)\right)y_m+\epsilon^{2^m+c} B_m(t,\epsilon)u_m\nonumber\\
&+\epsilon^c h_m(t,u_m,\epsilon)+\epsilon^c h_m(t,y_m+u_m,\epsilon)-\epsilon^c h_m(t,u_m,\epsilon)-\epsilon^c\partial_xh_m(t,u_m,\epsilon)y_m.
\end{align}
Define
\begin{align}
&A_m^*=A_m+\overline{\partial_xh_m}(u_m,\epsilon),\label{02.15}\\
&B_m^*(t,\epsilon)=B_m(t,\epsilon)+\frac{1}{\epsilon^{2^m}}\widetilde{\partial_xh}_m(t,u_m,\epsilon),\label{02.16}\\
&p_m^*(t,\epsilon)=\frac{1}{\epsilon^{2^m}}B_m(t,\epsilon)u_m+\frac{1}{\epsilon^{2^{m+1}}}h_m(t,u_m,\epsilon),\label{02.17}\\
&h_m^*(t,y_m,\epsilon)=h_m(t,y_m+u_m,\epsilon)- h_m(t,u_m,\epsilon)-\partial_xh_m(t,u_m,\epsilon)y_m.\label{02.18}
\end{align}
Then the system \eqref{2.12} takes the form of \eqref{2.11}. We now establish the bounds for each term. Using \eqref{02.10}, we obtain
\begin{align*}
\|A_m^*\|&\leq\|A_m\|+\|\partial_xh_m(\cdot,u_m,\epsilon)\|_{\sigma_m}\leq\|A_m\|+K_m\|u_m\|_{\sigma_m}\\
&\leq\|A_m\|+\epsilon^{2^m}K_m L_{1,m}\|p_m\|_{\rho_m},
\end{align*}
\begin{align*}
\|B_m^*\|_{\sigma_m}\leq\|B_m\|_{\rho_m}+\frac{2}{\epsilon^{2^m}}K_m\|u_m\|_{\sigma_m}\leq\|B_m\|_{\rho_m}+2K_mL_{1,m}\|p_m\|_{\rho_m},
\end{align*}
\begin{align*}
\|p_m^*\|_{\sigma_m}&\leq\frac{1}{\epsilon^{2^m}}\|B_m\|_{\rho_m}\|u_m\|_{\sigma_m}
+\frac{1}{\epsilon^{2^{m+1}}}\frac{K_m}{2}\|u_m\|^2_{\sigma_m}\\
&\leq L_{1,m}\|B_m\|_{\rho_m}\|p_m\|_{\rho_m}+\frac{K_mL_{1,m}^2}{2}\|p_m\|_{\rho_m}^2.
\end{align*}
Since $y_m\in \mathbf{B}_{r_m^*}$, $y_m+u_m\in \mathbf{B}_{r_m}$. Thus,
\begin{align*}
\|\partial_x^2h_m^*(\cdot,y_m,\epsilon)\|_{\rho_m}\leq\sup\limits_{z_m\in\mathbf{B}_{r_m}}\|\partial_{x}^2h_m(\cdot,z_m,\epsilon)\|_{\rho_m}\leq K_m.
\end{align*}
We now estimate the Lipschitz constants. From \eqref{02.15}, \eqref{02.11} and Lemma \ref{ALe3} in the appendix, we have
\begin{align*}
\mathcal{L}(A_m^*)&\leq \mathcal{L}\left( A_m\right)+\mathcal{L}\left( \overline{\partial_xh_m}(u_m,\epsilon)\right)\\
&\leq\mathcal{L}(A_m)+K_m\mathcal{L}(u_m)+\sup\limits_{\|x\|\leq \|u_m\|_{\sigma_m,\epsilon_1}}\mathcal{L}\left( \overline{\partial_xh_m}(x,\epsilon)\right)\\
&\leq\mathcal{L}(A_m)+K_mE_{1,m}\epsilon_1^{2^m}\|p_{m}\|_{\rho_m,\epsilon_1}\left(\mathcal{L}(A_m)+1\right)+K_mE_{2,m}\mathcal{L}\left(\epsilon^{2^m}p_{m}\right)\\
&~~+\Delta_1\left(\frac{\|u_m\|_{\sigma_m,\epsilon_1}}{r_m}\right)\mathcal{L}(h_m)\frac{\|u_m\|_{\sigma_m,\epsilon_1}}{r_m}.
\end{align*}
By \eqref{02.16}, \eqref{02.11} and Lemma \ref{ALe3}, we have
\begin{align*}
\mathcal{L}\left(\epsilon^{2^m}B_m^*\right)&\leq\mathcal{L}\left(\epsilon^{2^m}B_m\right)
+\mathcal{L}\left(\widetilde{\partial_xh}_m(t,u_m,\epsilon)\right)\\
&\leq\mathcal{L}\left(\epsilon^{2^m}B_m\right)+2K_m\mathcal{L}(u_m)+\sup\limits_{\|x\|\leq \|u_m\|_{\sigma_m,\epsilon_1}}\mathcal{L}\left( \widetilde{\partial_xh}_m(t,x,\epsilon)\right)\\
&\leq\mathcal{L}\left(\epsilon^{2^m}B_m\right)+2K_mE_{1,m}\epsilon_1^{2^m}\|p_{m}\|_{\rho_m,\epsilon_1}\left(\mathcal{L}(A_m)+1\right)\\
&+2K_mE_{2,m}\mathcal{L}\left(\epsilon^{2^m}p_{m}\right)
+2\Delta_1\left(\frac{\|u_m\|_{\sigma_m,\epsilon_1}}{r_m}\right)\mathcal{L}(h_m)\frac{\|u_m\|_{\sigma_m,\epsilon_1}}{r_m}.
\end{align*}
From \eqref{02.17} and Lemma \ref{ALe3}, we have
\begin{align*}
&\mathcal{L}\left(\epsilon^{2^{m+1}}p_m^*\right)\\
&\leq\mathcal{L}\left(\epsilon^{2^m}B_m(t,\epsilon)u_m\right)
+\mathcal{L}\left(h_m(t,u_m,\epsilon)\right)\\
&\leq\mathcal{L}\left(\epsilon^{2^m}B_m\right)\|u_m\|_{\sigma_m,\epsilon_1}+\epsilon_1^{2^m}\|B_m\|_{\rho_m,\epsilon_1}
\mathcal{L}(u_m)\\
&+\sup\limits_{\|x\|\leq \|u_m\|_{\sigma_m,\epsilon_1}}\|\partial_{x}h_m(\cdot,x,\cdot)\|_{\rho_m,\epsilon_1}\mathcal{L}(u_m)+\sup\limits_{\|x\|\leq \|u_m\|_{\sigma_m,\epsilon_1}}\mathcal{L}(h_m(t,x,\epsilon))\\
&\leq \mathcal{L}\left(\epsilon^{2^m}B_m\right)\|u_m\|_{\sigma_m,\epsilon_1}+\left(\epsilon_1^{2^m}\|B_m\|_{\rho_m,\epsilon_1}+K_m\|u_m\|_{\sigma_m,\epsilon_1}\right)\mathcal{L}(u_m)\\
&+\Delta_2\left(\frac{\|u_m\|_{\sigma_m,\epsilon_1}}{r_m}\right)\mathcal{L}(h_m)\frac{\|u_m\|^2_{\sigma_m,\epsilon_1}}{r^2_m}.
\end{align*}
Using \eqref{02.18} and Lemma \ref{ALe3}, we have
\begin{align*}
\mathcal{L}\left(h_m^*\right)&\leq\mathcal{L}\left(h_m(t,y_m+u_m,\epsilon)\right)+
\mathcal{L}\left(h_m(t,u_m,\epsilon)\right)+\mathcal{L}\left(\partial_xh_m(t,u_m,\epsilon)y_m\right)\\
&\leq \sup\limits_{x\in\mathbf{B}_{r_m}}\|\partial_{x}h_m(\cdot,x,\cdot)\|_{\rho_m,\epsilon_1}\mathcal{L}(u_m)+\mathcal{L}(h_m)\\
&+\sup\limits_{\|x\|\leq \|u_m\|_{\sigma_m,\epsilon_1}}\|\partial_{x}h_m(\cdot,x,\cdot)\|_{\rho_m,\epsilon_1}\mathcal{L}(u_m)+\mathcal{L}(h_m)\\
&+K_m r_m^*\mathcal{L}(u_m)+r_m^*\sup\limits_{\|x\|\leq \|u_m\|_{\sigma_m,\epsilon_1}}\mathcal{L}\left(\partial_xh_m(t,x,\epsilon)\right)\\
&\leq 3K_mr_m\mathcal{L}(u_m)+2\mathcal{L}(h_m)+ \Delta_1\left(\frac{\|u_m\|_{\sigma_m,\epsilon_1}}{r_m}\right)\mathcal{L}(h_m)\|u_m\|_{\sigma_m,\epsilon_1}.
\end{align*}
\end{proof}

We now transform the term $\epsilon^{2^m} B_m^*(t,\epsilon)$ in system \eqref{2.11} into a higher-order term of order $\epsilon^{2^{m+1}}$. Consider the equation
\begin{align}\label{2.14}
\dot{S}_m=\epsilon^c A_m^* S_m-\epsilon^c S_mA_m^*+ \epsilon^{2^m} B_m^*(t,\epsilon),
\end{align}
where $A_m^*$ and $B_m^*(t,\epsilon)$ are given in Lemma \ref{lem3}. Assume $A_m^*\in\mathrm{{B}}_\alpha(A)$.
Let $\lambda^{*,(1)}_{m},\dots,\lambda^{*,(n)}_{m}$ be the eigenvalues
of $A_m^*$. By Lemma \ref{ALe1} and Gerschgorin's theorem, they are also pairwise distinct and satisfy
\begin{align*}
\mu<|\lambda^{*,(i)}_{m}|<\mu^*,\ \ \mu<|\lambda^{*,(i)}_{m}-\lambda^{*,(j)}_{m}|<\mu^*,
\end{align*}
for $1\leq i,j\leq n$ and $i\neq j$.
Assume that
\begin{align}\label{2.15}
|\sqrt{-1}\langle k,\omega\rangle-\epsilon^c\lambda^{*,(i)}_{m}+\epsilon^c\lambda^{*,(j)}_{m}|\geq\frac{\gamma}{2}|k|^{-\tau_m}e^{-\nu_m|k|},
\end{align}
for all $k\in\mathbb{Z}^d\backslash\{0\}$ and $1\leq i,j\leq n$. The following lemma holds.
\begin{lemma}\label{lem4}
Assume  that the Diophantine condition \eqref{2.15} holds. Then system \eqref{2.14} admits a unique real analytic $\omega$ quasi-periodic solution $S_m(t,\epsilon)$ defined on $\mathbb{T}^d_{\rho_{m+1}}$ such that
\begin{align}\label{2.16}
\|S_m\|_{\rho_{m+1}}\leq \epsilon^{2^m}\|B_m^*\|_{\sigma_m}{L}_{2,m},
\end{align}
where $L_{2,m}=\frac{32\beta_0^4}{\gamma}\varpi(\tau_m,\sigma_m-\rho_{m+1}-\nu_m)$.  Moreover, $S_m(t,\epsilon)$ is Lipschitz continuous in $\epsilon$ and its Lipschitz constant satisfies
\begin{align}\label{002.22}
\mathcal{L}(S_m)\leq E_{3,m}\epsilon_1^{2^m}\|B_m^*\|_{\sigma_m}\left(\mathcal{L}(A_m^*)+1\right)+E_{4,m}\mathcal{L}\left(\epsilon^{2^m}B_m^*\right),
\end{align}
where
$E_{3,m}=\tilde{M}_3\varpi(2\tau_m,\sigma_m-\rho_{m+1}-2\nu_m),$ and $E_{4,m}=\tilde{M}_4\varpi(\tau_m,\sigma_m-\rho_{m+1}-\nu_m)$ with
$\tilde{M}_3$ and $\tilde{M}_4$ being positive constants independent of $m$.
\end{lemma}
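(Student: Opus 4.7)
The argument parallels the proof of the previous lemma for $u_m$, adapted to the Sylvester-type equation by diagonalizing $A_m^*$. Since $A_m^*\in\mathrm{B}_\alpha(A)$, Lemma \ref{ALe1} provides a matrix $C_m^*$ with $(C_m^*)^{-1}A_m^*C_m^* = D_m^* = \diag\{\lambda_{m,1}^*,\ldots,\lambda_{m,n}^*\}$ and $\max\{\|C_m^*\|,\|(C_m^*)^{-1}\|\}\leq 2\beta_0$. Setting $S_m = C_m^* T (C_m^*)^{-1}$ and $Q_m = (C_m^*)^{-1}B_m^*C_m^*$ reduces \eqref{2.14} to the entrywise decoupled system
$$\dot{T}_{ij} = \varepsilon\bigl(\lambda_{m,i}^* - \lambda_{m,j}^*\bigr)T_{ij} + \varepsilon^{2^m}Q_{m,ij}(t,\varepsilon).$$

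Next I would expand $T_{ij}$ and $Q_{m,ij}$ as Fourier series in $\omega t$ and solve coefficient by coefficient, obtaining
$$T_{ij}^k = \frac{\varepsilon^{2^m}Q_{m,ij}^k}{\sqrt{-1}\langle k,\omega\rangle - \varepsilon(\lambda_{m,i}^* - \lambda_{m,j}^*)}.$$
The delicate point is the mode $k = 0$, which is not covered by \eqref{2.15}: for $i=j$ the denominator vanishes. However, the inductive conclusion $\overline{B}_m^* = 0$ from Lemma \ref{lem3} forces $\overline{Q}_m = (C_m^*)^{-1}\overline{B_m^*}C_m^* = 0$, hence $Q_{m,ij}^0 = 0$ for all $i,j$, so the $k=0$ contribution disappears (uniqueness is then secured by setting $T_{ii}^0 = 0$, while for $i\neq j$ the equation itself forces $T_{ij}^0 = 0$). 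For $k\neq 0$, combining \eqref{2.15} with Cauchy's estimate $|Q_{m,ij}^k|\leq\|Q_m\|_{\sigma_m}e^{-\sigma_m|k|}$ gives
$$|T_{ij}^k| \leq \frac{2\varepsilon^{2^m}\|Q_m\|_{\sigma_m}}{\gamma}|k|^{\tau_m}e^{-(\sigma_m-\nu_m)|k|}.$$
Summing with weights $e^{\rho_{m+1}|k|}$ produces, via the definition of $\varpi$ in \eqref{2.5}, a bound proportional to $\varpi(\tau_m,\sigma_m-\rho_{m+1}-\nu_m)$; the bounds $\|Q_m\|_{\sigma_m}\leq 4\beta_0^2\|B_m^*\|_{\sigma_m}$ and $\|S_m\|_{\rho_{m+1}}\leq 4\beta_0^2\|T\|_{\rho_{m+1}}$ then yield the constant $L_{2,m} = 32\beta_0^4\gamma^{-1}\varpi(\tau_m,\sigma_m-\rho_{m+1}-\nu_m)$ of \eqref{2.16}.

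For the Lipschitz estimate \eqref{002.22}, I would differentiate the explicit expression for $T_{ij}^k$ in $\varepsilon$ using the product and quotient rules for Lipschitz functions from Lemma \ref{ALe4}, and then reassemble $S_m = C_m^* T (C_m^*)^{-1}$ with the Lipschitz constants of $C_m^*$, $(C_m^*)^{-1}$ and $\lambda_{m,i}^*$ all controlled by $\mathcal{L}(A_m^*)$ via Lemma \ref{ALe1}. Two families of contributions appear: those from the Lipschitz dependence of the small divisor and of the conjugating matrix, which produce a squared divisor summing to $\varpi(2\tau_m,\sigma_m-\rho_{m+1}-2\nu_m)$ and hence the coefficient $E_{3,m}$ multiplying $\varepsilon_1^{2^m}\|B_m^*\|_{\sigma_m}(\mathcal{L}(A_m^*)+1)$; and those from $\mathcal{L}(\varepsilon^{2^m}B_m^*)$ itself, which retain a single divisor summing to $\varpi(\tau_m,\sigma_m-\rho_{m+1}-\nu_m)$ and hence the coefficient $E_{4,m}$. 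Collecting these in exactly the bookkeeping pattern of the previous lemma yields \eqref{002.22}.

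The main obstacle is conceptual rather than computational: it is the need to handle the zero Fourier mode when the divisor $\sqrt{-1}\langle k,\omega\rangle - \varepsilon(\lambda_{m,i}^*-\lambda_{m,j}^*)$ vanishes at $k=0$, $i=j$. This is why Lemma \ref{lem3} was designed to maintain $\overline{B}_m^*=0$ along the iteration, and this is precisely the mechanism that makes the homological equation \eqref{2.14} uniquely solvable and keeps $\|S_m\|$ of order $\varepsilon^{2^m}$ rather than $\varepsilon^{2^m-1}$. Everything else is a direct transcription of the single-index small-divisor analysis used for $u_m$, with one additional factor of $2\beta_0$ on each side arising from the two-sided conjugation $S_m = C_m^* T (C_m^*)^{-1}$.
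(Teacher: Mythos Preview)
Your proposal is correct and follows essentially the same approach as the paper: diagonalize $A_m^*$ via Lemma \ref{ALe1}, reduce to the entrywise decoupled Sylvester equation, solve Fourier-coefficient by coefficient using \eqref{2.15}, sum via $\varpi$ to get \eqref{2.16}, and then track Lipschitz dependence through the conjugation and the small divisors to obtain \eqref{002.22}. Your explicit discussion of the $k=0$ mode (handled in the paper by simply writing the Fourier sums over $k\in\mathbb{Z}^d\backslash\{0\}$, since $\overline{B}_m^*=0$ implies $\overline{R}=0$) is a welcome clarification but not a departure from the paper's method.
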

\begin{proof}
Since  $A_m^*\in\mathrm{{B}}_\alpha(A)$, by Lemma \ref{ALe1} there exists
$n\times n$ matrix $C_m^*$ such that
$$(C^*_m)^{-1}A_m^*C_m^*=D_m^*=\diag\{\lambda^{*,(1)}_m,\dots,\lambda^{*,(n)}_m\},$$
and $\max\{\|C_m^*\|, \|(C_m^*)^{-1}\| \}\leq 2\beta_0$.
Let $\Gamma=(\gamma^{(i,j)})_{1\leq i,j\leq n}=(C^*_m)^{-1}S_mC_m^*$ and $R=(r^{(i,j)})_{1\leq i,j\leq n}=(C^*_m)^{-1}B_m^*C_m^*$.
Then equation \eqref{2.14} becomes
\begin{align}\label{a2.29}
\dot{\Gamma}=\epsilon^c D_m^* \Gamma-\epsilon^c\Gamma D_m^*+ \epsilon^{2^m} R(t,\epsilon).
\end{align}
Expanding $\gamma^{(i,j)}(t,\epsilon)$ and $r^{(i,j)}(t,\epsilon)$ in Fourier series:
$$\gamma^{(i,j)}(t,\epsilon)=\sum\limits_{k\in\mathbb{Z}^d\backslash\{0\}}\gamma^{(i,j)}_ke^{\sqrt{-1}\langle k,\omega\rangle t},
 \quad r^{(i,j)}(t,\epsilon)=\sum\limits_{k\in\mathbb{Z}^d\backslash\{0\}}r^{(i,j)}_ke^{\sqrt{-1}\langle k,\omega\rangle t}$$
for $1\leq i,j\leq n$.
Substituting into \eqref{a2.29} and comparing coefficients yields
\begin{align}\label{02.22}
\gamma^{(i,j)}_k=\frac{\epsilon^{2^m}r^{(i,j)}_k}{\sqrt{-1}\langle k,\omega\rangle-\epsilon^c(\lambda^{*,(i)}_m-\lambda^{*,(j)}_m)}, \ \ \ \ \ k\in\mathbb{Z}^d\backslash\{0\}.
\end{align}
By the Cauchy estimate and \eqref{2.15},
\begin{align*}
|\gamma^{(i,j)}_k|\leq\epsilon^{2^m}\|r^{(i,j)}\|_{\sigma_m}\frac{2|k|^{\tau_m}}{\gamma}e^{-(\sigma_m-\nu_m)|k|}, \ \ \ \ \ k\in\mathbb{Z}^d\backslash\{0\}.
\end{align*}
Therefore, for $\omega t\in\mathbb{T}^d_{\rho_{m+1}}$,
\begin{align*}
|\gamma^{(i,j)}(t,\epsilon)|&\leq\sum\limits_{k\in\mathbb{Z}^d\backslash\{0\}}|\gamma^{(i,j)}_k||e^{\sqrt{-1}\langle k,\omega\rangle t}|\\
&\leq\epsilon^{2^m}\sum\limits_{k\in\mathbb{Z}^d\backslash\{0\}}\|r^{(i,j)}\|_{\sigma_m}\frac{2|k|^{\tau_m}}{\gamma}e^{-(\sigma_m-\nu_m-\rho_{m+1})|k|}\\
&\leq \epsilon^{2^m}\|r^{(i,j)}\|_{\sigma_m}\frac{2\varpi(\tau_m,\sigma_m-\nu_m-\rho_{m+1})}{\gamma}.
\end{align*}
The estimate \eqref{2.16} then follows from Lemma \ref{ALe1}.

It remains to  estimate the Lipschitz constants of $S_m$. From \eqref{02.22}, Lemma \ref{ALe1}, and Lemma \ref{ALe4}, we have
\begin{align*}
\mathcal{L}\left(\gamma^{(i,j)}_k\right)\leq\frac{4\epsilon_1^{2^m}|r^{(i,j)}_k|_{\epsilon_1}}{(\gamma|k|^{-\tau_m}e^{-\nu_m|k|})^2}
\mathcal{L}(\epsilon^c(\lambda^{*,(i)}_m-\lambda^{*,(j)}_m))
+\frac{2}{\gamma|k|^{-\tau_m}e^{-\nu_m|k|}}\mathcal{L}\left(\epsilon^{2^m}r^{(i,j)}_k\right).
\end{align*}
Applying Lemmas \ref{ALe1} and \ref{ALe4} yields
\begin{align*}
\mathcal{L}(\Gamma)
&\leq \epsilon_1^{2^m}\frac{4}{\gamma^2}\|R\|_{\sigma_m,\epsilon_1}\varpi(2\tau_m,\sigma_m-\rho_{m+1}-2\nu_m)\left(\mu^*c\epsilon_1^{c-1}+2\epsilon_1^c\kappa_2\mathcal{L}(A_m^*)\right)\\
&+\frac{2}{\gamma}\varpi(\tau_m,\sigma_m-\rho_{m+1}-\nu_m)\mathcal{L}\left(\epsilon^{2^m}R\right)\\
&\leq \epsilon_1^{2^m}\frac{16\beta_0^2}{\gamma^2}\|B_m^*\|_{\sigma_m,\epsilon_1}\varpi(2\tau_m,\sigma_m-\rho_{m+1}-2\nu_m)\left(\mu^*c\epsilon_1^{c-1}+2\epsilon_1^c\kappa_2\mathcal{L}(A_m^*)\right)\\
&+\frac{2}{\gamma}\varpi(\tau_m,\sigma_m-\rho_{m+1}-\nu_m)
\left(4\beta_0\kappa_1\epsilon_1^{2^m}\|B_m^*\|_{\sigma_m,\epsilon_1}\mathcal{L}(A_m^*)+4\beta_0^2\mathcal{L}(\epsilon^{2^m}B_m^*)\right).
\end{align*}
Combining \eqref{2.16} and noticing that
$$\varpi(2\tau_m,\sigma_m-\rho_{m+1}-2\nu_m)\leq\varpi(\tau_m,\sigma_m-\rho_{m+1}-\nu_m),$$
we obtain
\begin{align*}
\mathcal{L}(S_m)&\leq\mathcal{L}\left(C^*_m\right)\|\Gamma\|_{\rho_{m+1},\epsilon_1}
\left(\sup\limits_{\epsilon\in(0,\epsilon_1)}\|(C^*_m)^{-1}\|\right)\\
&+\left(\sup\limits_{\epsilon\in(0,\epsilon_1)}\|C^*_m\|\right)\mathcal{L}\left(\Gamma\right)\left(\sup\limits_{\epsilon\in(0,\epsilon_1)}
\|(C^*_m)^{-1}\|\right)\\
&~~~+\left(\sup\limits_{\epsilon\in(0,\epsilon_1)}\|C^*_m\|\right)\|\Gamma\|_{\rho_{m+1},\epsilon_1}\mathcal{L}\left((C^*_m)^{-1}\right)\\
&\leq \kappa_1\mathcal{L}(A_m^*)4\beta_0^2\|S_m\|_{\rho_{m+1}}2\beta_0
+4\beta_0^2\mathcal{L}\left(\Gamma\right)+2\beta_04\beta_0^2\|S_m\|_{\rho_{m+1},\epsilon_1}\kappa_1\mathcal{L}(A_m^*)\\
&\leq E_{3,m}\epsilon_1^{2^m}\|B_m^*\|_{\sigma_m,\epsilon_1}\left(\mathcal{L}(A_m^*)+1\right)+E_{4,m}\mathcal{L}\left(\epsilon^{2^m}B_m^*\right).
\end{align*}
\end{proof}

\begin{lemma}\label{lem5}
Assume that the Diophantine condition \eqref{2.15} holds. Then the change of variables $y_m=(I_n+\epsilon^c S_m(t,\epsilon))z_{m+1}$ makes system \eqref{2.11}
become
\begin{align}\label{2.18}
\dot{z}_{m+1}&=\epsilon^c\left(A_{m+1}+ \epsilon^{2^{m+1}} B_{m+1}(t,\epsilon)\right)z_{m+1}+\epsilon^{2^{m+1}+c} p_{m+1}( t,\epsilon)\nonumber\\
&+\epsilon^c h_{m+1}(t,z_{m+1},\epsilon),
\end{align}
where the average $\overline{B}_{m+1}=0$ and the following estimates hold.
\begin{enumerate}
\item[{\rm(1)}]$\|A_{m+1}\|\leq\|A_m^*\|+\epsilon^{2^m+c}\frac{\|S_m\|_{\rho_{m+1}}}{1-\epsilon^c\|S_m\|_{\rho_{m+1}}}\|B_m^*\|_{\sigma_m}$.
\item[{\rm(2)}] $\|B_{m+1}\|_{\rho_{m+1}}\leq\frac{2\|S_m\|_{\rho_{m+1}}}{\epsilon^{2^{m}-c}(1-\epsilon^c\|S_m\|_{\rho_{m+1}})}\|B_m^*\|_{\sigma_m}$.
\item[{\rm(3)}] $\|p_{m+1}\|_{\rho_{m+1}}\leq \frac{1}{1-\epsilon^c\|S_m\|_{\rho_{m+1}}}\|p_m^*\|_{\sigma_m}$.
\item[{\rm(4)}] $\|\partial_{x}^2h_{m+1}(\cdot,z_{m+1},\epsilon)\|_{\rho_{m+1}}\leq K_{m+1}$  for all $z_{m+1}\in \mathbf{B}_{r_{m+1}}$,
 where
 \begin{align}\label{2.19}
 K_{m+1}=\frac{(1+\epsilon^c \|S_m\|_{\rho_{m+1}})^2}{1-\epsilon^c\|S_m\|_{\rho_{m+1}}}K_m,\ \ \ \ \ r_{m+1}=\frac{r_m^*}{1+\epsilon^c\|S_m\|_{\rho_{m+1}}}.
 \end{align}
 \item[{\rm(5)}]\begin{align*}
\mathcal{L}(A_{m+1})&\leq\mathcal{L}(A_m^*)+\frac{\epsilon_1^c\|S_m\|_{\rho_{m+1},\epsilon_1}}{1-\epsilon_1^c\|S_m\|_{\rho_{m+1},\epsilon_1}}\mathcal{L}(\epsilon^{2^m}B_m^*)\\
&+\frac{\epsilon_1^{2^m}\|B_m^*\|_{\sigma_m,\epsilon_1}}{(1-\epsilon_1^c\|S_m\|_{\rho_{m+1},\epsilon_1})^2}\left(\|S_m\|_{\rho_{m+1},\epsilon_1}+\epsilon_1^c\mathcal{L}(S_m)\right).
\end{align*}
 \item[{\rm(6)}]\begin{align*}
\mathcal{L}\left(\epsilon^{2^{m+1}}B_{m+1}\right)
&\leq\frac{2\epsilon_1^c\|S_m\|_{\rho_{m+1},\epsilon_1}}{1-\epsilon_1^c\|S_m\|_{\rho_{m+1},\epsilon_1}}\mathcal{L}(\epsilon^{2^m}B_m^*)\\
&+\frac{2\epsilon_1^{2^m}\|B_m^*\|_{\sigma_m,\epsilon_1}}{(1-\epsilon_1^c\|S_m\|_{\rho_{m+1},\epsilon_1})^2}\left(\|S_m\|_{\rho_{m+1},\epsilon_1}
+\epsilon_1^c\mathcal{L}(S_m)\right).
\end{align*}
 \item[{\rm(7)}]\begin{align*}
\mathcal{L}\left(\epsilon^{2^{m+1}}p_{m+1}\right)&\leq\frac{\mathcal{L}(\epsilon^{2^{m+1}}p_m^*)}{1-\epsilon_1^c\|S_m\|_{\rho_{m+1},\epsilon_1}}\\
&+\frac{\epsilon_1^{2^{m+1}}\|p_m^*\|_{\sigma_m,\epsilon_1}}{(1-\epsilon_1^c\|S_m\|_{\rho_{m+1},\epsilon_1})^2}\left(\|S_m\|_{\rho_{m+1},\epsilon_1}+\epsilon_1^c\mathcal{L}(S_m)\right).
\end{align*}
 \item[{\rm(8)}]\begin{align*}
&\mathcal{L}(h_{m+1})
\leq\frac{K_m(r_m^*)^2}{2(1-\epsilon_1^c\|S_m\|_{\rho_{m+1},\epsilon_1})^2}\left(\|S_m\|_{\rho_{m+1},\epsilon_1}+\epsilon_1^c\mathcal{L}(S_m)\right)\\
&+\frac{1}{1-\epsilon_1^c\|S_m\|_{\rho_{m+1},\epsilon_1}}
\left(K_m(r_m^*)^2\|S_m\|_{\rho_{m+1},\epsilon_1}+\epsilon_1^cK_m(r_m^*)^2\mathcal{L}(S_m)+\mathcal{L}(h_m^*)\right).
\end{align*}
\end{enumerate}
\end{lemma}
\begin{proof}
Making the change of variables $y_m=(I_n+\epsilon^c S_m(t,\epsilon))z_{m+1}$ in system \eqref{2.11}, we have
\begin{align*}
(I_n+\epsilon^c S_m)\dot{z}_{m+1}+\epsilon^c\dot{S}_mz_{m+1}&=\epsilon^c\left(A_m^*+ \epsilon^{2^m} B_m^*(t,\epsilon)\right)(I_n+\epsilon^c S_m(t,\epsilon))z_{m+1}\\
&+\epsilon^{2^{m+1}+c} p_m^*(t,\epsilon)
+\epsilon^c h_m^*(t,(I_n+\epsilon^c S_m(t,\epsilon))z_{m+1},\epsilon).
\end{align*}
By \eqref{2.14}, it follows that
\begin{align*}
\dot{z}_{m+1}&=\epsilon^c\left(A_m^*+ \epsilon^{2^m+c} (I_n+\epsilon^c S_m)^{-1}B_m^*S_m\right)z_{m+1}
+\epsilon^{2^{m+1}+c}(I_n+\epsilon^c S_m)^{-1} p_m^*(t,\epsilon)\\
&+\epsilon^c (I_n+\epsilon^c S_m)^{-1}h_m^*(t,(I_n+\epsilon^c S_m)z_{m+1},\epsilon).
\end{align*}
Let $\mathcal{B}(t,\epsilon)=(I_n+\epsilon^c S_m(t,\epsilon))^{-1}B_m^*(t,\epsilon)S_m(t,\epsilon)$ and
\begin{align*}
&A_{m+1}=A_m^*+ \epsilon^{2^m+c}\overline{\mathcal{B}},\\ &B_{m+1}(t,\epsilon)=\frac{1}{\epsilon^{2^{m}-c}}\widetilde{\mathcal{B}}(t,\epsilon),\\
&p_{m+1}(t,\epsilon)=(I_n+\epsilon^c S_m)^{-1} p_m^*(t,\epsilon),\\
&h_{m+1}(t,z_{m+1},\epsilon)=(I_n+\epsilon^c S_m)^{-1}h_m^*(t,(I_n+\epsilon^c S_m(t,\epsilon))z_{m+1},\epsilon).
\end{align*}
Thus, we obtain \eqref{2.18}. We now give the bound of each term. It is straightforward to see that
$$\|(I_n+\epsilon^c S_m)^{-1}\|_{\rho_{m+1}}\leq \sum\limits_{i=0}^\infty(\epsilon^{c})^i\|S_m\|^i_{\rho_{m+1}}\leq\frac{1}{1-\epsilon^c\|S_m\|_{\rho_{m+1}}}.$$
Then,
\begin{align*}
&\|A_{m+1}\|\leq\|A_m^*\|+\epsilon^{2^m+c}\frac{\|S_m\|_{\rho_{m+1}}}{1-\epsilon^c\|S_m\|_{\rho_{m+1}}}\|B_m^*\|_{\sigma_m},\\
&\|B_{m+1}\|_{\rho_{m+1}}\leq\frac{2\|S_m\|_{\rho_{m+1}}}{\epsilon^{2^{m}-c}(1-\epsilon^c\|S_m\|_{\rho_{m+1}})}\|B_m^*\|_{\sigma_m},\\
&\|p_{m+1}\|_{\rho_{m+1}}\leq \frac{1}{1-\epsilon^c\|S_m\|_{\rho_{m+1}}}\|p_m^*\|_{\sigma_m}.
\end{align*}
For $z_{m+1}\in \mathbf{B}_{r_{m+1}}$, we have $(I_n+\epsilon^c S_m)z_{m+1}\in \mathbf{B}_{r_m^*}$ and
\begin{align*}
&\|\partial_{x}^2h_{m+1}(\cdot,z_{m+1},\epsilon)\|_{\rho_{m+1}}\\
&\leq\frac{\|(I_n+\epsilon^c S_m)\|^2_{\rho_{m+1}}}{1-\epsilon^c\|S_m\|_{\rho_{m+1}}}\|\partial_{x}^2h_m^*(\cdot,(I_n+\epsilon^c S_m)z_{m+1},\epsilon)\|_{\rho_{m+1}}\\
&\leq \frac{(1+\epsilon^c \|S_m\|_{\rho_{m+1}})^2}{1-\epsilon^c\|S_m\|_{\rho_{m+1}}}K_m.
\end{align*}

It remains to estimate the Lipschitz constants:
\begin{align*}
\mathcal{L}(A_{m+1})&\leq\mathcal{L}(A_m^*)+\mathcal{L}\left(\epsilon^{2^m+c}\overline{\mathcal{B}}\right)\\
&\leq\mathcal{L}(A_m^*)+\mathcal{L}\left(\epsilon^{2^m+c}(I_n+\epsilon^c S_m)^{-1}B_m^*(t,\epsilon)S_m(t,\epsilon)\right)\\
&\leq\mathcal{L}(A_m^*)+\frac{\epsilon_1^c\|S_m\|_{\rho_{m+1},\epsilon_1}}{1-\epsilon_1^c\|S_m\|_{\rho_{m+1},\epsilon_1}}\mathcal{L}(\epsilon^{2^m}B_m^*)\\
&+\frac{\epsilon_1^{2^m}\|B_m^*\|_{\sigma_m,\epsilon_1}}{(1-\epsilon_1^c\|S_m\|_{\rho_{m+1},\epsilon_1})^2}\left(\|S_m\|_{\rho_{m+1},\epsilon_1}
+\epsilon_1^c\mathcal{L}(S_m)\right),
\end{align*}
\begin{align*}
\mathcal{L}\left(\epsilon^{2^{m+1}}B_{m+1}\right)&=\mathcal{L}\left(\epsilon^{2^{m}+c}\widetilde{\mathcal{B}}\right)\\
&\leq2\mathcal{L}\left(\epsilon^{2^m+c}(I_n+\epsilon^c S_m)^{-1}B_m^*(t,\epsilon)S_m(t,\epsilon)\right)\\
&\leq\frac{2\epsilon_1^c\|S_m\|_{\rho_{m+1},\epsilon_1}}{1-\epsilon_1^c\|S_m\|_{\rho_{m+1},\epsilon_1}}\mathcal{L}(\epsilon^{2^m}B_m^*)\\
&+\frac{2\epsilon_1^{2^m}\|B_m^*\|_{\sigma_m,\epsilon_1}}{(1-\epsilon_1^c\|S_m\|_{\rho_{m+1},\epsilon_1})^2}\left(\|S_m\|_{\rho_{m+1},\epsilon_1}
+\epsilon_1^c\mathcal{L}(S_m)\right),
\end{align*}
\begin{align*}
\mathcal{L}\left(\epsilon^{2^{m+1}}p_{m+1}\right)&=\mathcal{L}\left(\epsilon^{2^{m}+c}(I_n+\epsilon^c S_m)^{-1} p_m^*\right)\\
&\leq\frac{\mathcal{L}(\epsilon^{2^{m+1}}p_m^*)}{1-\epsilon_1^c\|S_m\|_{\rho_{m+1},\epsilon_1}}\\
&+\frac{\epsilon_1^{2^{m+1}}\|p_m^*\|_{\sigma_m,\epsilon_1}}{(1-\epsilon_1^c\|S_m\|_{\rho_{m+1},\epsilon_1})^2}\left(\|S_m\|_{\rho_{m+1},\epsilon_1}
+\epsilon_1^c\mathcal{L}(S_m)\right),
\end{align*}
\begin{align*}
&\mathcal{L}(h_{m+1})=\mathcal{L}\left((I_n+\epsilon^c S_m)^{-1}h_m^*(t,(I_n+\epsilon^c S_m(t,\epsilon))z_{m+1},\epsilon)\right)\\
&\leq\sup\limits_{x\in \mathbf{B}_{r_m^*}}\|h_m^*\|_{\sigma_m,\epsilon_1}\mathcal{L}\left((I_n+\epsilon^c S_m)^{-1}\right)\\
&+\frac{1}{1-\epsilon_1^c\|S_m\|_{\rho_{m+1},\epsilon_1}}\left(\sup\limits_{x\in \mathbf{B}_{r_m^*}}\|\partial_xh_m^*\|_{\sigma_m,\epsilon_1}\mathcal{L}\left(\epsilon^c S_mz_{m+1}\right)+\mathcal{L}(h_m^*)\right)\\
&\leq\frac{K_m(r_m^*)^2}{2(1-\epsilon_1^c\|S_m\|_{\rho_{m+1},\epsilon_1})^2}\left(\|S_m\|_{\rho_{m+1},\epsilon_1}
+\epsilon_1^c\mathcal{L}(S_m)\right)\\
&+\frac{1}{1-\epsilon_1^c\|S_m\|_{\rho_{m+1},\epsilon_1}}
\left(K_m(r_m^*)^2\|S_m\|_{\rho_{m+1},\epsilon_1}+\epsilon_1^cK_m(r_m^*)^2\mathcal{L}(S_m)+\mathcal{L}(h_m^*)\right).
\end{align*}
\end{proof}

\subsection{Convergence of the iterative process}
Now using the fact $\|S_m\|_{\rho_{m+1}}\leq\frac{1}{2}$ for each $\epsilon\in(0,\epsilon_1)$ which will be proved below, and combining Lemma \ref{lem3},  Lemma \ref{lem4} and Lemma \ref{lem5}, we have
\begin{align}\label{2.20}
\|B_{m+1}\|_{\rho_{m+1}}&\leq\frac{2\epsilon^{2^m}\|B_m^*\|_{\sigma_m}{L}_{2,m}}{\epsilon^{2^{m}-c}(1-\epsilon^c\|S_m\|_{\rho_{m+1}})}\|B_m^*\|_{\sigma_m}\nonumber\\
&\leq 4\epsilon^c {L}_{2,m}(\|B_m\|_{\rho_m}+2K_mL_{1,m}\|p_m\|_{\rho_m})^2,
\end{align}
\begin{align}\label{2.21}
\|p_{m+1}\|_{\rho_{m+1}}\leq K_mL_{1,m}^2\|p_m\|_{\rho_m}^2+2L_{1,m}\|B_m\|_{\rho_m}\|p_m\|_{\rho_m},
\end{align}
and
\begin{align}\label{2.22}
&\|A_{m+1}\|\leq \|A_m\|+\epsilon^{2^m+c}\|B_m\|_{\rho_m}+\epsilon^{2^m}(1+2\epsilon^c)K_mL_{1,m}\|p_m\|_{\rho_m}.
\end{align}
It is easy to see that there exists a constant $c_1>0$ such that
\begin{align}\label{02.23}
L_{1,m}\leq c_1{L}_{2,m},\ \ \ \ \ \ \ \forall m\in\mathbb{N}.
\end{align}

By the first formula in \eqref{2.19}, for sufficiently small $\epsilon>0$, we have $K_m\leq2^mK_0$ for $m\in\mathbb{N}$.
Let $\alpha_m=\max\{\|B_{m}\|_{\rho_{m}},\|p_{m}\|_{\rho_{m}}\}$. Then, by \eqref{2.20} and \eqref{2.21}, we obtain
\begin{align*}
\|B_{m+1}\|_{\rho_{m+1}}\leq 4\epsilon^c {L}_{2,m}(1+2^{m+1}K_0c_1{L}_{2,m})^2\alpha_m^2,
\end{align*}
\begin{align*}
\|p_{m+1}\|_{\rho_{m+1}}\leq(2^mK_0c_1^2{L}_{2,m}^2+2c_1{L}_{2,m})\alpha_m^2.
\end{align*}
Choose $\gamma\leq 1$ small enough such that ${L}_{2,m}\geq1$. Let
$$c_2\geq\max\{1,K_0c_1,K_0c_1^2,2c_1\}.$$
Then we obtain
\begin{align*}
\alpha_{m+1}\leq 4^{m+3}c_2^2{L}_{2,m}^3\alpha_m^2.
\end{align*}
By Lemma \ref{ALe2}, a direct calculation shows that there exists a constant $M_1>1$ such that
\begin{align}\label{2.25}
{L}_{2,m}<M_1^{2^m}, \quad \forall m\in\mathbb{N}.
\end{align}
Moreover, it is easy to prove that there exists a constant $M_2>1$ such that $\alpha_m\leq M_2^{2^m}$.
Hence,
\begin{align}\label{2.23}
\|B_{m}\|_{\rho_{m}}\leq M_2^{2^m},\ \ \ \ \ \ \|p_{m}\|_{\rho_{m}}\leq M_2^{2^m}.
\end{align}
Clearly, for $\epsilon_1<\frac{1}{M_2}$ and $\epsilon\in(0,\epsilon_1)$, we have
\begin{align}\label{2.31}
\lim\limits_{m\rightarrow\infty}\epsilon^{2^m}\|B_{m}\|_{\rho_{m}}=0,\ \ \ \ \ \ \lim\limits_{m\rightarrow\infty}\epsilon^{2^m}\|p_{m}\|_{\rho_{m}}=0.
\end{align}
By \eqref{2.22}, we obtain
\begin{align*}
\|A_{m+1}\|\leq \|A_m\|+\epsilon^{2^m}\left(\epsilon^c M_2^{2^m}+2K_0c_1(2M_1M_2)^{2^m}\right)\leq\|A_m\|+\epsilon^{2^m}M_3^{2^m}
\end{align*}
for some constant $M_3>1$.
Then, for $\epsilon<\frac{1}{2M_3}$,
\begin{align}\label{2.32}
\|A_{m}\|\leq \|A_0\|+2\epsilon M_3.
\end{align}
It follows that
$A_m\in\mathrm{B}_{\alpha}(A)$ for $0<\epsilon<\epsilon_1$ with sufficiently small $\epsilon_1$ and $m\in\mathbb{N}$. Similarly, using (1) of Lemma \ref{lem3},
we also have $A_m^*\in\mathrm{B}_{\alpha}(A)$ for $m\in\mathbb{N}$ and sufficiently small $\epsilon_1$.

Now we estimate $\|S_m\|_{\rho_{m+1}}$.
By \eqref{2.16} and Lemma \ref{lem3}, there exists a constant $M_4>1$ such that
\begin{align}\label{2.27}
\|S_m\|_{\rho_{m+1}}\leq \epsilon^{2^m}(\|B_m\|_{\rho_m}+2K_mL_{1,m}\|p_m\|_{\rho_m}){L}_{2,m}\leq \epsilon^{2^m} M_4^{2^m},\ \ \ m\in\mathbb{N}.
\end{align}
Obviously, if $\epsilon>0$ is small enough, $\|S_m\|_{\rho_{m+1}}\leq\frac{1}{2}$ for each $m\in\mathbb{N}$.

Using \eqref{02.10}, \eqref{02.23},  \eqref{2.25} and \eqref{2.23}, we obtain
\begin{align}\label{2.28}
\|u_m\|_{\sigma_{m}}\leq \epsilon^{2^m} \|p_m\|_{\rho_m}L_{1,m}\leq c_1\epsilon^{2^m}(M_2M_1)^{2^m}.
\end{align}
By \eqref{2.19}, we have
 \begin{align*}
 r_{m+1}=\frac{r_m^*}{1+\epsilon^c \|S_m\|_{\rho_{m+1}}}=\frac{r_m-\|u_m\|_{\sigma_m}}{1+\epsilon^c\|S_m\|_{\rho_{m+1}}}.
 \end{align*}
Then,
 \begin{align}
 r_{m+1}\geq r_0\prod\limits_{j=0}^m\frac{1}{1+\epsilon^c \|S_j\|_{\rho_{j+1}}}-\sum\limits_{j=0}^{\infty}\frac{\|u_j\|_{\sigma_j}}{1+\epsilon^c\|S_j\|_{\rho_{j+1}}}.
 \end{align}
By \eqref{2.27} and \eqref{2.28}, for sufficiently small $\epsilon>0$, the sequence $\{r_m\}$ is convergent and
\begin{align}\label{2.35}
\lim\limits_{m\rightarrow\infty}r_m=r_\infty>0.
\end{align}

To ensure the convergence of $h_{m}(t,z_{m},\epsilon)$, we now give an accurate bound of $K_m$.
By the first formula in \eqref{2.19}, for sufficiently small $\epsilon>0$,
 \begin{align*}
 K_{m+1}=\frac{(1+\epsilon^c \|S_m\|_{\rho_{m+1}})^2}{1-\epsilon^c\|S_m\|_{\rho_{m+1}}}K_m\leq (1+2\epsilon^c \|S_m\|_{\rho_{m+1}})^2K_m.
 \end{align*}
It follows from \eqref{2.27} that
$\lim\limits_{m\rightarrow\infty}K_m=K_\infty>0$.
\subsection{Measure estimates on the parameter set}
In order to continue the iteration process, we require that Diophantine conditions \eqref{2.8} and \eqref{2.15} hold for each $m\in\mathbb{N}$.
We now estimate the measure of the set of parameters $\epsilon$ that do not satisfy these Diophantine conditions.

By Lemma \ref{ALe2}, we see that there exists a constant $M_5>1$ such that
\begin{align}
\max\{E_{1,m}, E_{2,m}, E_{3,m}, E_{4,m}\}<M_5^{2^m},\ \ \ \ \ \ \forall m\in\mathbb{N}.
\end{align}
According to  \eqref{2.28} and \eqref{2.35}, for sufficiently small $\epsilon>0$, we have
$$\frac{\|u_m\|_{\sigma_m}}{r_m}\leq\frac{1}{2},\ \ \ \forall m\in\mathbb{N}.$$
Take $M_6\geq\max\left\{M_2M_1M_5, M_2^2M_1,\frac{\Delta_1(\frac{1}{2})}{r_0},\frac{\Delta_2(\frac{1}{2})}{r_0^2}\right\}$ and $\epsilon_1>0$ small enough.
Then \eqref{02.11}, \eqref{2.23}, \eqref{2.28} and Lemma \ref{lem3} yield
 \begin{align}
&\mathcal{L}(u_m)\leq (\epsilon_1M_2M_5)^{2^m}\left(\mathcal{L}(A_m)+1\right)+M_5^{2^m}\mathcal{L}\left(\epsilon^{2^m}p_{m}\right),\\
&\mathcal{L}(A_m^*)\leq\mathcal{L}(A_m)+(\epsilon_1M_6)^{2^m}
\left(\mathcal{L}(A_m)+\mathcal{L}(h_m)+1\right)+M_5^{2^m}\mathcal{L}\left(\epsilon^{2^m}p_{m}\right),\label{2.39}\\
&\mathcal{L}\left(\epsilon^{2^m}B_m^*\right)\leq\mathcal{L}\left(\epsilon^{2^m}B_m\right)+(\epsilon_1M_6)^{2^m}
\left(\mathcal{L}(A_m)+\mathcal{L}(h_m)+1\right)+2M_5^{2^m}\mathcal{L}\left(\epsilon^{2^m}p_{m}\right),\label{2.40}\\
&\mathcal{L}\left(\epsilon^{2^{m+1}}p_m^*\right)\leq (\epsilon_1M_6)^{2^m}\left(\mathcal{L}(A_m)+\mathcal{L}\left(\epsilon^{2^m}B_m\right)
+\mathcal{L}\left(\epsilon^{2^m}p_{m}\right)+\mathcal{L}(h_m)+1\right),\label{2.41}\\
&\mathcal{L}\left(h_m^*\right)\leq 3K_{\infty}r(\epsilon_1M_6)^{2^m}\left(\mathcal{L}(A_m)+1\right)+3\mathcal{L}(h_m)+3K_\infty rM_5^{2^m}\mathcal{L}\left(\epsilon^{2^m}p_{m}\right).\label{2.42}
\end{align}
By (2) and (3) of Lemma \ref{lem3}, \eqref{02.23}, \eqref{2.25} and \eqref{2.23}, we have
\begin{align}\label{2.43}
\|B_m^*\|_{\sigma_m}\leq(1+2c_1K_{\infty})M_6^{2^m}, \ \ \ \|p_m^*\|_{\sigma_m}\leq \left(c_1+\frac{K_\infty c_1^2}{2}\right)M_6^{2^m}.
\end{align}
Then, using \eqref{002.22}, \eqref{2.39} and \eqref{2.40}, we can choose a constant $M_7>1$ and take $\epsilon_1>0$ small enough such that
\begin{align}\label{2.44}
\mathcal{L}(S_m)\leq (\epsilon_1M_7)^{2^m}\left(\mathcal{L}(A_m)+\mathcal{L}\left(h_m\right)+1\right)
+M_7^{2^m}\left(\mathcal{L}\left(\epsilon^{2^m}B_{m}\right)+\mathcal{L}\left(\epsilon^{2^m}p_{m}\right)\right).
\end{align}
According to Lemma \ref{lem5}, \eqref{2.27} and \eqref{2.39}-\eqref{2.44}, by taking a sufficiently large constant $M_8>1$, for sufficiently small $\epsilon_1>0$, we have
\begin{align}\label{2.45}
\mathcal{L}(A_{m+1})&\leq\mathcal{L}(A_m)+M_5^{2^m}\mathcal{L}\left(\epsilon^{2^m}p_{m}\right)\nonumber\\
&+(\epsilon_1M_8)^{2^m}\left(\mathcal{L}(A_m)+\mathcal{L}\left(\epsilon^{2^m}B_{m}\right)+\mathcal{L}\left(\epsilon^{2^m}p_{m}\right)+\mathcal{L}\left(h_m\right)+1\right),
\end{align}
\begin{align}\label{2.46}
\mathcal{L}\left(\epsilon^{2^{m+1}}B_{m+1}\right)
\leq(\epsilon_1M_8)^{2^m}\left(\mathcal{L}(A_m)+
\mathcal{L}\left(\epsilon^{2^m}B_{m}\right)+\mathcal{L}\left(\epsilon^{2^m}p_{m}\right)+\mathcal{L}\left(h_m\right)+1\right),
\end{align}
\begin{align}\label{2.47}
\mathcal{L}\left(\epsilon^{2^{m+1}}p_{m+1}\right)\leq(\epsilon_1M_8)^{2^m}\left(\mathcal{L}(A_m)+\mathcal{L}\left(\epsilon^{2^m}B_{m}\right)+\mathcal{L}\left(\epsilon^{2^m}p_{m}\right)+\mathcal{L}\left(h_m\right)+1\right),
\end{align}
\begin{align}\label{2.48}
\mathcal{L}(h_{m+1})
&\leq(\epsilon_1M_8)^{2^m}\left(\mathcal{L}(A_m)+1\right)
+M_8^{2^m}\left(\mathcal{L}\left(\epsilon^{2^m}B_{m}\right)
+\mathcal{L}\left(\epsilon^{2^m}p_{m}\right)\right)+6\mathcal{L}\left(h_m\right).
\end{align}
Let $\varphi_m=\max\{\mathcal{L}(A_m),\mathcal{L}\left(\epsilon^{2^m}B_{m}\right),\mathcal{L}\left(\epsilon^{2^m}p_{m}\right),\mathcal{L}\left(h_m\right),1\}$.
Then, by \eqref{2.45}-\eqref{2.48} and taking $\epsilon_1M_8<1$, we can choose a constant $M_9>1$ such that
$$\varphi_{m+1}\leq M_9^{2^m}\varphi_m,\ \ \ \ m\in\mathbb{N}.$$
It follows that
\begin{align}\label{02.49}
\varphi_{m}\leq\varphi_0\prod\limits_{j=0}^{m-1}M_9^{2^j}<M_9^{2^{m}}\varphi_0.
\end{align}
Substituting this into \eqref{2.47}, we obtain
\begin{align}\label{02.50}
\mathcal{L}\left(\epsilon^{2^{m+1}}p_{m+1}\right)\leq5\varphi_0(\epsilon_1M_8M_9)^{2^m}.
\end{align}
Then, by \eqref{2.45}, we have
\begin{align}
\mathcal{L}(A_1)\leq\mathcal{L}(A_0)+M_5\|p\|_{\rho,\epsilon_1}+\epsilon_1 M_5\mathcal{L}(p)+5\epsilon_1M_8\varphi_0,
\end{align}
and for $m\geq 1$,
\begin{align*}
\mathcal{L}(A_{m+1})&\leq\mathcal{L}(A_m)
+5 M_5^{2^m}\varphi_0(\epsilon_1M_8M_9)^{2^{m-1}}+5(\epsilon_1M_8)^{2^m}M_9^{2^{m}}\varphi_0\\
&\leq \mathcal{L}(A_m)+5\varphi_0\left(\epsilon_1M_8M_9(M_5^2+\epsilon_1 M_8M_9)\right)^{2^{m-1}}.
\end{align*}
Hence, for sufficiently small $\epsilon_1>0$, there exists a constant $M_{10}>0$ such that
\begin{align*}
\mathcal{L}(A_{m})&\leq M_{10}, \quad \forall m\in\mathbb{N}.
\end{align*}
Let $\delta_1=\left(\frac{2 \kappa_2 M_{10}}{\mu c}\right)^{\frac{1}{c-1}}$ and $\delta_2=\frac{c}{c-1}$. Then, by Lemma \ref{ALe1}, for $1\leq i\leq n$, $m\in\mathbb{N}$, $\epsilon\in(0,\epsilon_1)$ and $\tilde{\epsilon}_1, \tilde{\epsilon}_2\in\left(\delta_1\epsilon^{\delta_2},\epsilon\right)$, we have
\begin{align}\label{2.49}
|\tilde{\epsilon}_1^c\lambda_m^{(i)}(\tilde{\epsilon}_1)-\tilde{\epsilon}_2^c\lambda_m^{(i)}(\tilde{\epsilon}_2)|
&\geq|\tilde{\epsilon}_1^c\lambda_m^{(i)}(\tilde{\epsilon}_1)-\tilde{\epsilon}_2^c\lambda_m^{(i)}(\tilde{\epsilon}_1)|
-|\tilde{\epsilon}_2^c\lambda_m^{(i)}(\tilde{\epsilon}_1)-\tilde{\epsilon}_2^c\lambda_m^{(i)}(\tilde{\epsilon}_2)|\nonumber\\
&\geq\left(c\mu(\delta_1\epsilon^{\delta_2})^{c-1}-\kappa_2 M_{10}\epsilon^c\right)|\tilde{\epsilon}_1-\tilde{\epsilon}_2|\nonumber\\
&\geq \kappa_2 M_{10}\epsilon^c|\tilde{\epsilon}_1-\tilde{\epsilon}_2|.
\end{align}
By choosing $M_{10}$ sufficiently large, it follows from \eqref{2.39}, \eqref{02.49} and \eqref{02.50} that
\begin{align*}
\mathcal{L}(A_m^*)\leq M_{10}, \quad \forall m\in\mathbb{N}.
\end{align*}
In the same way as \eqref{2.49}, for $1\leq i,j\leq n$, $i\neq j$, and $\tilde{\epsilon}_1, \tilde{\epsilon}_2\in\left(\delta_1\epsilon^{\delta_2},\epsilon\right)$ with sufficiently small $\epsilon>0$, we obtain
\begin{align}\label{2.50}
|\tilde{\epsilon}_1^c(\lambda^{*,(i)}_m-\lambda^{*,(j)}_m)(\tilde{\epsilon}_1)-
\tilde{\epsilon}_2^c(\lambda^{*,(i)}_m-\lambda^{*,(j)}_m)(\tilde{\epsilon}_2)|
\geq\kappa_2 M_{10}\epsilon^c|\tilde{\epsilon}_1-\tilde{\epsilon}_2|.
\end{align}
For $\delta>0$, let
\begin{align*}
\mathcal{R}^{(m)}(\delta)=\big\{\varphi\in(0,\delta):~&\exists k'\in\mathbb{Z}^d\backslash\{0\}, {\rm~such~that~}
|\langle k',\omega\rangle-\varphi|<\frac{\gamma}{2}|k'|^{-\tau_m}e^{-\nu_m|k'|}\big\},
\end{align*}
and $\mathcal{R}(\delta)=\bigcup\limits_{m=0}^\infty\mathcal{R}^{(m)}(\delta)$.

The following lemma provides an estimate for the measure of $\mathcal{R}(\delta)$.
\begin{lemma}\label{lem6}
Assume that $\omega$ satisfies the Diophantine condition \eqref{DC}. Then there exist constants $a_1>0$ and $0<a_2<\frac{1}{\tau}$ such that for sufficiently small $\delta>0$, the Lebesgue measure
$\meas(\mathcal{R}(\delta))\leq \delta \exp({-\frac{a_1}{\delta^{a_2}}})$.
\end{lemma}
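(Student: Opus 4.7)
The plan is to bound the Lebesgue measure of $\mathcal{R}(\delta)$ by $\sum_{m=0}^{\infty}\meas(\mathcal{R}^{(m)}(\delta))$ and control each summand through a careful split dictated by how the parameters $\tau_m=\tau\kappa^m$ and $\nu_m=c_0\rho_0/(4(m+1)^2)$ scale against $\delta$.

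First, fix $m$ and cover $\mathcal{R}^{(m)}(\delta)$ by intervals $I_{k'}$ of length $\gamma|k'|^{-\tau_m}e^{-\nu_m|k'|}$ centered at $\langle k',\omega\rangle$ for $k'\in\mathbb{Z}^d\setminus\{0\}$. For $I_{k'}\cap(0,\delta)\neq\emptyset$ we have $|\langle k',\omega\rangle|\leq\delta+\gamma|k'|^{-\tau_m}/2$. Combining this with the Diophantine lower bound $|\langle k',\omega\rangle|\geq\gamma|k'|^{-\tau}$ and using $\tau_m\geq\tau$, one deduces $|k'|\geq N_\delta:=(\gamma/(2\delta))^{1/\tau}$. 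This gives the key estimate
\begin{equation*}
\meas(\mathcal{R}^{(m)}(\delta))\leq\gamma\sum_{|k'|\geq N_\delta}|k'|^{-\tau_m}e^{-\nu_m|k'|}.
\end{equation*}

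Next, choose a cutoff $m^*=m^*(\delta)$ with $(m^*+1)^2\sim N_\delta^{1-\beta}$ for some fixed $\beta\in(0,1)$, so that $\nu_{m^*}N_\delta\sim N_\delta^\beta$. For $m\leq m^*$, write $e^{-\nu_m|k'|}=e^{-\nu_m|k'|/2}e^{-\nu_m|k'|/2}\leq e^{-\nu_mN_\delta/2}e^{-\nu_m|k'|/2}$ and bound the remaining sum $\sum_{k'\neq 0}e^{-\nu_m|k'|/2}$ by a constant times $\nu_m^{-d}$ via comparison with an integral; this yields $\meas(\mathcal{R}^{(m)}(\delta))\leq C\gamma\,\nu_m^{-d}e^{-\nu_mN_\delta/2}$, and after summing over $m\leq m^*$ one obtains a total contribution dominated by $\delta\exp(-c N_\delta^\beta)$. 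For $m>m^*$, use instead the superpolynomial decay $|k'|^{-\tau_m}\leq N_\delta^{-\tau_m}$ for $|k'|\geq N_\delta$ and bound $\sum_{|k'|\geq N_\delta}e^{-\nu_m|k'|}\leq C\nu_m^{-d}$; since $\tau_m=\tau\kappa^m$ grows doubly exponentially in $m$, the tail $\sum_{m>m^*}(m+1)^{2d}N_\delta^{-\tau\kappa^m}$ is negligible compared with $e^{-cN_\delta^\beta}$.

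Finally, substituting $N_\delta\sim\delta^{-1/\tau}$ converts the bound $\exp(-cN_\delta^\beta)$ into $\exp(-c\delta^{-\beta/\tau})$; choosing $a_2=\beta/\tau$ for any $\beta\in(0,1)$ gives $a_2<1/\tau$, and absorbing the polynomial prefactor into the exponential (by slightly decreasing the constant) produces the claimed inequality $\meas(\mathcal{R}(\delta))\leq\delta\exp(-a_1/\delta^{a_2})$. The main technical obstacle is the fact that $\nu_m\to 0$, which destroys the exponential decay of $e^{-\nu_m|k'|}$ for large $m$; this is precisely what forces the two-regime split and uses the doubly exponential growth of $\tau_m$ to handle the tail, rather than being able to obtain the result from a single uniform estimate.
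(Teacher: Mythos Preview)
The paper does not give a proof of this lemma; it simply writes ``The proof of Lemma~\ref{lem6} is classical. See, for example, Lemma~2.24 of \cite{Jo}. We omit it here.'' So there is no in-paper argument to compare against. Your sketch is sound and follows exactly the classical template that the cited reference uses: the Diophantine condition forces every contributing $k'$ to have $|k'|\ge N_\delta\sim(\gamma/2\delta)^{1/\tau}$, and then one must control $\sum_{m\ge 0}\sum_{|k'|\ge N_\delta}|k'|^{-\tau_m}e^{-\nu_m|k'|}$. Because $\nu_m\to 0$, a single uniform exponential bound is unavailable, and your split at a threshold $m^*$ with $(m^*+1)^2\sim N_\delta^{1-\beta}$---exploiting the exponential factor $e^{-\nu_m|k'|}$ for $m\le m^*$ and the growing exponent $\tau_m$ for $m>m^*$---is precisely the standard device. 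The identification $a_2=\beta/\tau$ for any $\beta\in(0,1)$ correctly recovers the range $0<a_2<1/\tau$.

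One small wording slip: $\tau_m=\tau\kappa^m$ grows \emph{exponentially} in $m$, not doubly exponentially. What you actually need (and implicitly use) is that $N_\delta^{-\tau_m}=\exp(-\tau\kappa^m\ln N_\delta)$ decays doubly exponentially in $m$, which is what makes the tail $\sum_{m>m^*}(m+1)^{2d}N_\delta^{-\tau\kappa^m}$ dominated by its first term and hence negligible next to $e^{-cN_\delta^\beta}$, since $\kappa^{m^*}\gtrsim\kappa^{N_\delta^{(1-\beta)/2}}\gg N_\delta^\beta$. With that correction the argument is complete.
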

The proof of Lemma \ref{lem6} is classical. See, for example, Lemma 2.24 of \cite{Jo}. We omit it here.

For $0<\epsilon\leq\epsilon_1$, let
\begin{align*}
&\mathcal{R}_{1,i}^{(m)}(\epsilon)=\big\{\tilde{\epsilon}\in \left(0, \epsilon\right): \tilde{\epsilon}^c\lambda_m^{(i)}(\tilde{\epsilon})\in\mathcal{R}^{(m)}(\mu^*\epsilon^c)\big\},\ \ \ \ \ 1\leq i\leq n,\\
&\mathcal{R}_{2,i,j}^{(m)}(\epsilon)=\big\{\tilde{\epsilon}\in\left(0, \epsilon\right): \tilde{\epsilon}^c(\lambda^{*,(i)}_m(\tilde{\epsilon})-\lambda^{*,(j)}_m(\tilde{\epsilon}))\in\mathcal{R}^{(m)}(\mu^*\epsilon^c)\big\},\ 1\leq i,j\leq n,\ i\neq j,\\
&\mathcal{R}_{1}(\epsilon)=\bigcup\limits_{i=1}^n\bigcup\limits_{m=0}^\infty\mathcal{R}_{1,i}^{(m)}(\epsilon),\quad
\mathcal{R}_{2}(\epsilon)=\bigcup\limits_{\substack{i,j=1\\ i\neq j}}^n\bigcup\limits_{m=0}^\infty\mathcal{R}_{2,i,j}^{(m)}(\epsilon),
\end{align*}
and
$\widetilde{\mathcal{R}}(\epsilon)=\mathcal{R}_{1}(\epsilon)\cup\mathcal{R}_{2}(\epsilon).$

It is easy to see that the parameter set that does not satisfy Diophantine conditions \eqref{2.8} and \eqref{2.15} at each KAM iteration is included in $\widetilde{\mathcal{R}}(\epsilon_1)$. By \eqref{2.49}, \eqref{2.50} and Lemma \ref{lem6}, for $0<\epsilon\leq\epsilon_1$, we have
\begin{align}\label{2.54}
\meas\left(\widetilde{\mathcal{R}}(\epsilon)\cap\left(\delta_1\epsilon^{\delta_2},\epsilon\right)\right)&\leq \frac{n+n^2}{\kappa_2 M_{10}\epsilon^c}\meas\left(\mathcal{R}(\mu^*\epsilon^c)\right)\nonumber\\
&\leq \frac{(n+n^2)\mu^*}{\kappa_2 M_{10}}\exp\left({-\frac{a_1}{(\mu^*\epsilon^c)^{a_2}}}\right).
\end{align}

\subsection{Proof of Theorem \ref{th1.2}}
Now we prove the existence of response solutions for system \eqref{01.5}.

\begin{proof}[Proof of Theorem \ref{th1.2}]
 According to Lemma \ref{lem3} and Lemma \ref{lem5}, and by \eqref{2.31}, for each $\epsilon\in (0,\epsilon_1)\backslash \widetilde{\mathcal{R}}(\epsilon_1)$,
after infinite KAM iterations, system \eqref{2.6} can be transformed into
\begin{align}\label{2.55}
\dot{z}_\infty&=\epsilon^c A_{\infty}z_\infty+\epsilon^c h_{\infty}(t,z_\infty,\epsilon),
\end{align}
where $A_{\infty}=\lim\limits_{m\rightarrow\infty}A_m$ and $h_{\infty}(t,z_\infty,\epsilon)=\lim\limits_{m\rightarrow\infty}h_{m}(t,z_\infty,\epsilon)$, which consists of quadratic and higher-order terms in $z_\infty$.
Moreover, the transformation is given by
\begin{align*}
z=\Phi(t,\epsilon) z_{\infty}+\Psi(t,\epsilon),
\end{align*}
where $\Phi(t,\epsilon)=\lim\limits_{m\rightarrow\infty}\Phi_m(t,\epsilon)$ and $\Psi(t,\epsilon)=\lim\limits_{m\rightarrow\infty}\Psi_m(t,\epsilon)$, with
$\Phi_m(t,\epsilon)=(I_n+\epsilon^c S_0(t,\epsilon))\circ\cdots\circ(I_n+\epsilon^c S_m(t,\epsilon))$ and
$\Psi_m(t,\epsilon)=u_0(t,\epsilon)
+\sum\limits_{j=0}^m\Phi_j(t,\epsilon)u_{j+1}(t,\epsilon)$.

Note that $\varepsilon=\epsilon^{\tilde{c}}$. Let
$$\widehat{\mathcal{R}}(\varepsilon)=\{\tilde{\varepsilon}\in(0,\varepsilon): \tilde{\varepsilon}^{\frac{1}{\tilde{c}}}\in\widetilde{\mathcal{R}}(\varepsilon^{\frac{1}{\tilde{c}}})\},$$
$\varepsilon_1=\epsilon_1^{\tilde{c}}$ and $\mathcal{E}=(0,\varepsilon_1)\backslash \widehat{\mathcal{R}}(\varepsilon_1)$.
It follows from Lemma \ref{lem1} that for each $\varepsilon\in \mathcal{E}$,
$$x=\Phi\left(t,\varepsilon^\frac{1}{\tilde{c}}\right) z_{\infty}+\Psi\left(t,\varepsilon^\frac{1}{\tilde{c}}\right)+x_0+\varepsilon^a u\left(t,\Phi\left(t,\varepsilon^\frac{1}{\tilde{c}}\right) z_{\infty}+\Psi\left(t,\varepsilon^\frac{1}{\tilde{c}}\right)+x_0\right)$$
transforms system \eqref{01.5} into \eqref{2.55}.
Since each iteration is a quasi-periodic transformation with frequency $\omega$, the zero solution of system \eqref{2.55} corresponds to a quasi-periodic solution $x^*(t,\varepsilon)$ of system \eqref{01.5}.
By \eqref{2.27} and \eqref{2.28}, we have $\lim\limits_{\varepsilon\rightarrow0}\Psi(t,\varepsilon^\frac{1}{\tilde{c}})=0$
and $\lim\limits_{
\varepsilon\rightarrow0}x^*(t,\varepsilon)=x_0$.

Finally, we estimate the relative measure of the set $\mathcal{E}\cap(0,\varepsilon)$ in $(0,\varepsilon)$ for $0<\varepsilon\leq \varepsilon_1$.
It is easy to see that for each $0<\varepsilon\leq\varepsilon_1$,
$$\mathcal{E}\cap(0,\varepsilon)=(0,\varepsilon)\backslash \widehat{\mathcal{R}}(\varepsilon),$$
and
$$\widehat{\mathcal{R}}(\varepsilon)\cap\left(\delta_1^{\tilde{c}}\varepsilon^{\delta_2},\varepsilon\right)
=\left\{\tilde{\varepsilon}\in\left(\delta_1^{\tilde{c}}\varepsilon^{\delta_2},\varepsilon\right):\tilde{\varepsilon}^{\frac{1}{\tilde{c}}}
\in\left(\widetilde{\mathcal{R}}(\varepsilon^{\frac{1}{\tilde{c}}})
\cap\left(\delta_1\varepsilon^{\frac{\delta_2}{\tilde{c}}},\varepsilon^{\frac{1}{\tilde{c}}}\right)\right)\right\}.$$
For $\tilde{\varepsilon}_1, \tilde{\varepsilon}_2\in(0,\varepsilon)$, we have
$$|\tilde{\varepsilon}_1^{\frac{1}{\tilde{c}}}-\tilde{\varepsilon}_2^{\frac{1}{\tilde{c}}}|
\geq\frac{1}{\tilde{c}}\varepsilon^{\frac{1}{\tilde{c}}-1}|\tilde{\varepsilon}_1-\tilde{\varepsilon}_2|.$$
It follows from \eqref{2.54}that
\begin{align*}
\meas\left(\widehat{\mathcal{R}}(\varepsilon)\cap\left(\delta_1^{\tilde{c}}\varepsilon^{\delta_2},\varepsilon\right)\right)\leq
\frac{(n+n^2)\mu^*\tilde{c}\varepsilon^{1-\frac{1}{\tilde{c}}}}{\kappa_2 M_{10}}\exp\left({-\frac{a_1}{(\mu^*\varepsilon^{a})^{a_2}}}\right),
\end{align*}
Hence,
\begin{align*}
\lim\limits_{\varepsilon\rightarrow0}\frac{\meas\left(\mathcal{E}\cap(0,\varepsilon)\right)}{\varepsilon}\geq 1-\lim\limits_{\varepsilon\rightarrow0}\delta_1^{\tilde{c}}\varepsilon^{\delta_2-1}-
\lim\limits_{\varepsilon\rightarrow0}\frac{\meas\left(\widehat{\mathcal{R}}(\varepsilon)\cap\left(\delta_1^{\tilde{c}}\varepsilon^{\delta_2},\varepsilon\right)\right)}
{\varepsilon}=1,
\end{align*}
which completes the proof.
\end{proof}

\section{Proof of Theorem \ref{theorem}}\label{sec4}

In this section, we  give a  proof of Theorem \ref{theorem}.
\begin{proof}
Using the rescaled parameter $\varepsilon=\epsilon^{2}$, system \eqref{1.1} can be rewritten as
\begin{align}\label{3.71}
\dot{x}=\epsilon^2 f_0(\omega t,x)+\epsilon^3 g(\omega t,x,\epsilon),
\end{align}
where $g(\omega t,x,\epsilon)=\frac{1}{\epsilon}\left(f(\omega t,x,\epsilon^2)-f_0(\omega t,x)\right)$. Clearly, both $f_0$ and $g$ are real analytic on
$\mathbb{T}^d\times U$.
Define $g_1(\omega t,x,\epsilon^2)=f(\omega t,x,\epsilon^2)-f_0(\omega t,x)$.
By the Lipschitz continuity of $f(\omega t,x,\varepsilon)$ on $[0,\varepsilon_0)$, we have
$$\|g_1(\omega t,x,\epsilon^2)\|\leq L\epsilon^2,$$
for all $\epsilon\in(0,\epsilon_0)$  and $(\omega t,x)\in \mathbb{T}^d\times U$,
where $\epsilon_0=\sqrt{\varepsilon_0}$ and $L$ is the Lipschitz constant of $f$.
For any $0<\tilde{\epsilon}_1<\tilde{\epsilon}_2<\epsilon_0$,
\begin{align*}
&\|g(\omega t,x,\tilde{\epsilon}_1)-g(\omega t,x,\tilde{\epsilon}_2)\|\\
&=\left\|\frac{g_1(\omega t,x,\tilde{\epsilon}_1^2)}{\tilde{\epsilon}_1}-\frac{g_1(\omega t,x,\tilde{\epsilon}_2^2)}{\tilde{\epsilon}_2}\right\|\\
&\leq\left\|\frac{\tilde{\epsilon}_2 g_1(\omega t,x,\tilde{\epsilon}_1^2)-\tilde{\epsilon}_1 g_1(\omega t,x,\tilde{\epsilon}_1^2)}{\tilde{\epsilon}_1\tilde{\epsilon}_2}\right\|+
\left\|\frac{\tilde{\epsilon}_1 g_1(\omega t,x,\tilde{\epsilon}_1^2)-\tilde{\epsilon}_1 g_1(\omega t,x,\tilde{\epsilon}_2^2)}{\tilde{\epsilon}_1\tilde{\epsilon}_2}\right\|\\
&\leq3L|\tilde{\epsilon}_1-\tilde{\epsilon}_2|.
\end{align*}
Similarly, one can obtain the Lipschitz continuity of $\partial_x g(\omega t,x,\epsilon)$.
Then, according to Theorem \ref{th1.2}, there exist a constant $\epsilon_1>0$ and
 a subset $\widehat{\mathcal{R}}(\epsilon_1)\subset (0,\epsilon_1)$ such that
for each $\epsilon\in(0,\epsilon_1)\backslash\widehat{\mathcal{R}}(\epsilon_1)$, system \eqref{3.71} has a response solution.
Moreover, we have $\lim\limits_{\epsilon\rightarrow0}\frac{\meas(\widehat{\mathcal{R}}(\epsilon))}{\epsilon}=0$.

Let $\varepsilon_1=\epsilon_1^2$ and define
$$\overline{\mathcal{R}}(\varepsilon)=\{\tilde{\varepsilon}\in(0,\varepsilon): \sqrt {\tilde{\varepsilon}}\in\widehat{\mathcal{R}}(\sqrt{\varepsilon})\},$$
for $0<\varepsilon\leq\varepsilon_1$.
Let $\mathcal{E}=(0,\varepsilon_1)\backslash \overline{\mathcal{R}}(\varepsilon_1)$.
Then for each $\varepsilon\in\mathcal{E}$, system \eqref{1.1} has a response solution.
For any $\tilde{\varepsilon}_1, \tilde{\varepsilon}_2\in(0,\varepsilon)$, we have
$$\left|\sqrt{\tilde{\varepsilon}_1}-\sqrt{\tilde{\varepsilon}_2}\right|
\geq\frac{1}{2\sqrt{\varepsilon}}\left|\tilde{\varepsilon}_1-\tilde{\varepsilon}_2\right|.$$
It follows that
$$\meas(\overline{\mathcal{R}}(\varepsilon))\leq 2\sqrt{\varepsilon}\meas(\widehat{\mathcal{R}}(\sqrt\varepsilon)).$$
Note that
$\mathcal{E}\cap(0,\varepsilon)=(0,\varepsilon)\backslash\overline{\mathcal{R}}(\varepsilon).$
Then,
\begin{align*}
\lim\limits_{\varepsilon\rightarrow0}\frac{\meas\left(\mathcal{E}\cap(0,\varepsilon)\right)}{\varepsilon}&\geq 1-
\lim\limits_{\varepsilon\rightarrow0}\frac{\meas\left(\overline{\mathcal{R}}(\varepsilon)\right)}
{\varepsilon}\\
&\geq 1-
\lim\limits_{\varepsilon\rightarrow0}\frac{2\meas(\widehat{\mathcal{R}}(\sqrt\varepsilon))}
{\sqrt{\varepsilon}}\\
&=1.
\end{align*}

\end{proof}

\section{Higher-order averaging method}\label{sec5}
In this section, we present a higher-order averaging method for establishing the existence of response solutions in quasi-periodic systems. For higher-order averaging methods concerning periodic solutions and invariant tori of periodic systems, we refer to \cite{GLWZ,LNT,NP,PNC}.

For $r>0$ and an open bounded subset $U\subset\mathbb{R}^n$, let
$\mathbf{B}_r(U)=\{x\in\mathbb{C}^n:\ \inf\limits_{z\in U}\|x-z\|\leq r\}$. Let $N$ be a fixed positive integer.
Consider the higher-order perturbed system
\begin{align}\label{4}
\dot{x}=\sum\limits_{k=1}^N\varepsilon^k f_k(\omega t,x)+\varepsilon^{N+1}g(\omega t,x,\varepsilon),
\end{align}
where $f_k:\mathbb{T}^d_\rho\times \mathbf{B}_r(U)\rightarrow \mathbb{C}^n$ is real analytic for $k=1,\dots, N$, $g:\mathbb{T}^d_\rho\times \mathbf{B}_r(U)\times(0,\varepsilon_0)\rightarrow \mathbb{C}^n$ is continuous and real analytic on $\mathbb{T}^d_\rho\times \mathbf{B}_r(U)$ for each $\varepsilon\in(0,\varepsilon_0)$. Furthermore, $g(\theta,x,\varepsilon)$ and $\partial_x g(\theta,x,\varepsilon)$ are Lipschitz continuous with respect to $\varepsilon$ on $(0,\varepsilon_0)$ uniformly for $(\theta,x)\in\mathbb{T}^d_\rho\times \mathbf{B}_r(U)$.

Assume that there exists a constant $M>0$ such that $\sup\limits_{x\in\mathbf{B}_{r}(U)}\|f_k(\cdot,x)\|_{\rho}\leq M$ for $k=1, \dots, N$ and
$\sup\limits_{\substack{x\in\mathbf{B}_{r}(U)\\ \varepsilon\in(0,\varepsilon_0)}}\|g(\cdot,x,\varepsilon)\|_{\rho}\leq M$.
Let $\tilde{\rho}_k=\frac{(2N-k)\rho}{2N}$ and $\tilde{r}_k=\frac{(2N-k) r}{2N}$ for $k=1, \dots, N$.

The following lemma indicates that via a near-identity transformation, system \eqref{4} can be transformed into the form of an autonomous system plus a higher-order perturbation.
\begin{lemma}\label{lem 4.1}
Assume that $\omega$ satisfies the Diophantine condition \eqref{DC}. Then there exist a constant $0<\varepsilon_1<\varepsilon_0$ and a change of variables $x= \Xi^N(\omega t,y,\varepsilon)$ defined for $\varepsilon\in(0,\varepsilon_1)$, that transforms the system \eqref{4} into
\begin{align}\label{4.78}
\dot{y}=\sum_{k=1}^N\varepsilon^k Y_k(y)+\varepsilon^{N+1}g_N(\omega t,y,\varepsilon).
\end{align}
Here, the transformation is given by $\Xi^N=\Xi_1\circ\Xi_2\circ\cdots\circ\Xi_N$, where  $\Xi_k(\omega t,y,\varepsilon)=y+\varepsilon^ku_k(\omega t,y)$ with
$u_k$ real analytic on $\mathbb{T}^d_{\tilde{\rho}_k}\times\mathbf{B}_{\tilde{r}_k}(U)$ for $k=1, \dots, N$;
each $Y_k:\mathbf{B}_{r/2}(U)\to \mathbb{C}^n$ is real analytic for $k=1, \dots, N$; $g_N$ is continuous and real analytic on $\mathbb{T}^d_{\rho/2}\times\mathbf{B}_{r/2}(U)$ for every fixed $\varepsilon\in(0,\varepsilon_1)$; and $g_N(\theta,y,\varepsilon)$ and $\partial_y g_N(\theta,y,\varepsilon)$ are Lipschitz continuous with respect to $\varepsilon$ on $(0,\varepsilon_1)$ uniformly for $(\theta,y)\in\mathbb{T}^d_{\rho/2}\times\mathbf{B}_{r/2}(U)$. Furthermore, the functions $Y_k$ and $u_k$ (for $k=1, \dots, N$) can be computed in the sequence $Y_1, u_1, Y_2, u_2, \dots, Y_N, u_N$.
\end{lemma}

In \cite{S}, Sim\'{o} established a result analogous to Lemma \ref{lem 4.1} that makes the time-dependent remainder exponentially small. However, our objective is to present recursive formulas for the averaged functions at each order. To ensure the completeness of the paper, we provide a direct proof of Lemma \ref{lem 4.1} at the end of this section.

\begin{remark}
 As seen in the proof, \eqref{4.92}, \eqref{4.89} and \eqref{4.93} provide recursive expressions for the functions $Y_m$ and $u_m$ ($1\leq m \leq N$).
For clarity, we outline the calculation of the first three terms $Y_m$ as follows:
 \begin{align*}
 &Y_1=\overline{f_1},\\
 &u_1(\omega t,x)=\int_0^t\widetilde{f_1}(\omega s,x)\mathrm{d}s-\lim\limits_{T\rightarrow+\infty}\frac{1}{T}\int_0^T\int_0^t\widetilde{f_1}(\omega s,x)\mathrm{d}s\mathrm{d}t,\\
 &f_{1,2}=-(\partial_x u_1) Y_1+(\partial_x f_1)u_1+f_2, \\
 &Y_2=\overline{f_{1,2}},\\
 &f_{1,3}=(\partial_x u_1)^2Y_1+\frac{1}{2}(\partial^2_xf_1) u_1^2-(\partial_x u_1)(\partial_x f_1) u_1
 +(\partial_x f_2) u_1-(\partial_x u_1)f_2 +f_3,\\
 &u_2(\omega t,x)=\int_0^t\widetilde{f_{1,2}}(\omega s,x)\mathrm{d}s-\lim\limits_{T\rightarrow+\infty}\frac{1}{T}\int_0^T\int_0^t\widetilde{f_{1,2}}(\omega s,x)\mathrm{d}s\mathrm{d}t,\\
 &f_{2,3}=-(\partial_x u_2) Y_1+(\partial_x Y_1) u_2+f_{1,3},\\
 &Y_3=\overline{f_{2,3}}.
 \end{align*}
\end{remark}

Based on Lemma \ref{lem 4.1} and Theorem \ref{th1.2}, we can immediately obtain the following higher-order averaging result concerning the existence of response solutions.
\begin{theorem}
Assume that $\omega$ satisfies the Diophantine condition \eqref{DC}. For some $l \in \{1, \dots, N\}$, suppose that $Y_j = 0$ for all $j = 1, \dots, l-1$ (where by convention we set $Y_0 = 0$).
Moreover, suppose that
there exists a point $x_0\in U$ such that $Y_l(x_0)=0$ and the eigenvalues of $\partial_xY_l(x_0)$ are nonzero and pairwise distinct. Then
there exist  a constant $0<\varepsilon_1<\varepsilon_0$ and a Cantorian set $\mathcal{E}\subset(0,\varepsilon_1)$ such that for each $\varepsilon\in\mathcal{E}$, the system
\eqref{4} has a quasi-periodic response solution $x^*(t,\varepsilon)$ satisfying $\lim\limits_{\substack{\varepsilon \in \mathcal{E} \\ \varepsilon \rightarrow 0}}x^*(t,\varepsilon)=x_0$ uniformly for $t\in\mathbb{R}$. The relative Lebesgue measure of $\mathcal{E}\cap(0,\varepsilon)$ with respect to $(0,\varepsilon)$ tends to 1 as $\varepsilon\rightarrow0$.
\end{theorem}
\begin{remark}
If the real parts of all eigenvalues of \( \partial_xY_l(x_0) \) are nonzero, then according to Lemma \ref{lem 4.1} and the classical averaging method (see, e.g., Hale \cite{Hale}), for sufficiently small \( \varepsilon_1>0 \), system \eqref{4} admits a quasi-periodic response solution $x^*(t,\varepsilon)$ for any \( \varepsilon \in (0, \varepsilon_1) \). Furthermore, if all eigenvalues of matrix \( \partial_xY_l(x_0) \) have negative real parts, then $x^*(t,\varepsilon)$ is asymptotically stable; if there exists an eigenvalue of \( \partial_xY_l(x_0) \) with a positive real part, then $x^*(t,\varepsilon)$ is unstable.
\end{remark}

In the remainder of this section, for a real analytic function
$f=(f^{(1)}, \dots, f^{(n)})^\top:\mathbb{T}^d_\rho\times \mathbf{B}_r(U)\rightarrow\mathbb{C}^n,$
let $\partial^k f(\theta,x)$, for $k \in \mathbb{N}$,
denote the $k$-th order partial derivative of $f$ with respect to $x$ at $(\theta,x)$.
$\partial^k f(\theta,x)$ is a $k$-linear map on $\mathbb{C}^n$ for each $(\theta,x)\in \mathbb{T}^d_\rho\times \mathbf{B}_r(U)$. For
$\xi_i=(\xi_i^{(1)},\dots,\xi_i^{(n)})^\top\in\mathbb{C}^n$, $i=1,\dots,k$,
$$\partial^k f(\theta,x)\xi_1\cdots\xi_k=\left(\partial^k f^{(1)}(\theta,x)\xi_1\cdots\xi_k,\dots,\partial^k f^{(n)}(\theta,x)\xi_1\cdots\xi_k\right)^\top,$$
where
$$\partial^k f^{(j)}(\theta,x)\xi_1\cdots\xi_k=\sum\limits_{i_1=1}^n\cdots\sum\limits_{i_k=1}^n\frac{\partial^kf^{(j)}(\theta,x)}{\partial x^{(i_1)}\cdots \partial x^{(i_k)}}\xi_1^{(i_1)}\cdots \xi_k^{(i_k)},$$
for $j=1,\dots, n$.
When $\xi_i=\xi$ for $i=1,\dots,k$, we adopt the abbreviated notation
$$\partial^k f(\theta,x)\xi^k:=\partial^k f(\theta,x)\xi_1\cdots\xi_k.$$
Note that $\xi^{(k)}$ denotes the $k$-th component of $\xi$, while $\xi^k$ is shorthand for $\xi\cdots\xi$ ($k$ times). We adopt the convention that $\partial^0 f=f$.

Using these notations, according to Taylor's formula, for any $m\in\mathbb{N}$, $x\in\mathbf{B}_r(U)$, $u \in \mathbb{C}^n$ and $\varepsilon \in [0, \varepsilon_0)$ such that $x+\varepsilon u\in\mathbf{B}_r(U)$, the real analytic function $f(\theta, x + \varepsilon u)$ can be expressed as:
\begin{align*}
f(\theta, x + \varepsilon u) = \sum_{k=0}^{m} \frac{\varepsilon^{k}}{k!} \partial^k f(\theta, x) u^{k} + \varepsilon^{m+1} R_{m,f}(\theta,x,u,\varepsilon),
\end{align*}
where
$$R_{m,f}(\theta,x,u,\varepsilon)=\frac{1}{m!} \int_0^1 (1-s)^m \partial^{m+1} f(\theta, x + s\varepsilon u) u^{m+1}  ds.$$

Now we give the proof of Lemma \ref{lem 4.1}.
\begin{proof}[Proof of Lemma \ref{lem 4.1}]
We employ a recursive approach for the proof.

First, consider the equation
\begin{align}\label{4.79}
\partial_t u_1(\omega t,x)=\widetilde{f_1}(\omega t,x),\ \ (\omega t,x)\in \mathbb{T}^d_{\tilde{\rho}_1}\times\mathbf{B}_{r}(U).
\end{align}
Assume that the Fourier expansions of $u_1$ and $f_1$ take the form:
$$u_1(\omega t,x)=\sum\limits_{k\in \mathbb{Z}^d}\hat{u}_{1,k}(x)e^{\sqrt{-1}\langle k,\omega\rangle t},\ \ \ f_1(\omega t,x)=\sum\limits_{k\in \mathbb{Z}^d}\hat{f}_{1,k}(x)e^{\sqrt{-1}\langle k,\omega\rangle t}.$$
Comparing the coefficients of each order, one has
$$\hat{u}_{1,k}(x)=\frac{\hat{f}_{1,k}(x)}{\sqrt{-1}\langle k,\omega\rangle},\ \ \ \ \ k\in \mathbb{Z}^d\backslash\{0\}.$$
Take $\hat{u}_{1,0}=0$. Then by the Diophantine condition \eqref{DC} and Cauchy's formula, we have
\begin{align}\label{4.80}
\|u_1(\cdot,x)\|_{\tilde{\rho}_1}
&\leq \frac{1}{\gamma}\|f_1(\cdot,x)\|_{\rho}\sum\limits_{k\in\mathbb{Z}^d\backslash\{0\}}e^{-\frac{\rho}{2N} |k|}|k|^\tau\leq M \varpi_1
\end{align}
for all $x\in\mathbf{B}_{r}(U)$, where $\varpi_1=\frac{1}{\gamma}\sum\limits_{k\in\mathbb{Z}^d\backslash\{0\}}e^{-\frac{\rho}{2N} |k|}|k|^\tau$ is
well defined by Lemma \ref{ALe2}.
Moreover,
\begin{align}\label{4.81}
\|\partial u_1(\cdot,x)\|_{\tilde{\rho}_1}\leq \sup\limits_{x\in\mathbf{B}_r(U)}\frac{2N}{r}\|u_1(\cdot,x)\|_{\tilde{\rho}_1}\leq\frac{2NM \varpi_1}{r}
\end{align}
for $x\in\mathbf{B}_{\tilde{r}_1}(U)$.
Setting $\varepsilon_1\leq\frac{r}{4NM\varpi_1}$, we see that $I_n+\varepsilon \partial u_1(\omega t,x)$ is invertible for $0<\varepsilon<\varepsilon_1$, and
\begin{align}\label{04.82}
\|\left(I_n+\varepsilon \partial u_1\right)^{-1}\|_{\tilde{\rho}_1}\leq\frac{1}{1-\varepsilon\|\partial u_1(\cdot,x)\|_{\tilde{\rho}_1}}\leq 2
\end{align}
for $x\in\mathbf{B}_{\tilde{r}_1}(U)$ and $\varepsilon\in(0,\varepsilon_1)$.
Then, by making the change of variables
$$x=y_1+\varepsilon u_1(\omega t,y_1),\ \ \ (\omega t,y_1)\in \mathbb{T}^d_{\tilde{\rho}_1}\times\mathbf{B}_{\tilde{r}_1}(U),$$
we obtain
\begin{align*}
\left(I_n+\varepsilon \partial u_1\right)\dot{y}_1+\varepsilon\partial_t u_1
&=\varepsilon\overline{f_1}(y_1)+\varepsilon\widetilde{f_1}(\omega t,y_1)+\varepsilon\left(f_1(\omega t,y_1+\varepsilon u_1)-f_1(\omega t,y_1)\right)\\
&+\sum\limits_{k=2}^N\varepsilon^k f_k(\omega t, y_1+\varepsilon u_1)
+\varepsilon^{N+1}g(\omega t,y_1+\varepsilon u_1,\varepsilon).
\end{align*}
It follows from \eqref{4.79} that
\begin{align*}
\dot{y}_1
&=\varepsilon\left(I_n+\varepsilon \partial u_1\right)^{-1}\overline{f_1}(y_1)\\
&+\varepsilon\left(I_n+\varepsilon \partial u_1\right)^{-1}\left({f_1}(\omega t,y_1+\varepsilon u_1)-{f_1}(\omega t,y_1)\right)\\
&+\sum\limits_{k=2}^N\varepsilon^k \left(I_n+\varepsilon \partial u_1\right)^{-1}f_k(\omega t, y_1+\varepsilon u_1)\\
&+\varepsilon^{N+1}\left(I_n+\varepsilon \partial u_1\right)^{-1}g(\omega t,y_1+\varepsilon u_1,\varepsilon).
\end{align*}
Note that for $m\geq 1$,
\begin{align}
\left(I_n+\varepsilon \partial u_1\right)^{-1}&=I_n+\sum\limits_{k=1}^m(-1)^k\varepsilon^k (\partial u_1)^k+\varepsilon^{m+1}
S_m(\partial u_1,\varepsilon),
\end{align}
where
\begin{align}\label{04.84}
S_m(\partial u_1,\varepsilon)=(-1)^{m+1}\left(I_n+ \varepsilon\partial u_1\right)^{-1}(\partial u_1)^{m+1}.
\end{align}
Then by Taylor's formula, we obtain
\begin{align}\label{04.85}
\dot{y}_1
=\varepsilon Y_1(y_1)+\sum\limits_{k=2}^N\varepsilon^k f_{1,k}(\omega t, y_1)+\varepsilon^{N+1}g_1(\omega t, y_1,\varepsilon),
\end{align}
where $Y_1=\overline{f_1}$,
\begin{align}\label{4.84}
f_{1,k}(\omega t,y_1)&=(-1)^{k-1}(\partial u_1)^{k-1}\overline{f_1}(y_1)\nonumber\\
&+\sum\limits_{j=0}^{k-2}\frac{(-1)^{j}}{(k-j-1)!}(\partial u_1)^{j}\partial ^{k-j-1}{f_1}(\omega t,y_1)u_1^{k-j-1}\nonumber\\
&+\sum\limits_{i=2}^k\sum\limits_{j=0}^{k-i}\frac{(-1)^{j}}{(k-j-i)!}(\partial u_1)^{j}\partial ^{k-j-i}f_i(\omega t,y_1)u_1^{k-j-i},
\end{align}
for $k=2, \dots, N$ and
\begin{align}\label{4.85}
&g_1(\omega t, y_1,\varepsilon)\nonumber\\
&=S_{N-1}(\partial u_1,\varepsilon)\overline{f_1}(y_1)
+\sum\limits_{j_1=0}^{N}\sum\limits_{j_2=N-j_1}^{N}\varepsilon^{j_1+j_2-N}\frac{(-1)^{j_1}}{j_2!}(\partial u_1)^{j_1}\partial ^{j_2}{f_1}(\omega t,y_1)u_1^{j_2}\nonumber\\
&+\varepsilon\left(I_n+ \varepsilon\partial u_1\right)^{-1}R_{N,f_1}(\omega t,y_1,u_1,\varepsilon)
+\varepsilon S_N(\partial u_1,\varepsilon)\left({f_1}(\omega t,y_1+\varepsilon u_1)-{f_1}(\omega t,y_1)\right)\nonumber\\
&-\varepsilon^{N+2}S_N(\partial u_1,\varepsilon)R_{N,f_1}(\omega t,y_1,u_1,\varepsilon)\nonumber\\
&+\sum_{k=2}^N\sum\limits_{j_1=0}^{N}\sum\limits_{j_2=l(j_1,k)}^{N}\varepsilon^{j_1+j_2+k-N-1}\frac{(-1)^{j_1}}{j_2!}(\partial u_1)^{j_1}\partial ^{j_2}{f_k}(\omega t,y_1)u_1^{j_2}\nonumber\\
&+\sum_{k=2}^N\varepsilon^k\left(\left(I_n+ \varepsilon\partial u_1\right)^{-1}R_{N,f_k}(\omega t,y_1,u_1,\varepsilon)+S_N(\partial u_1,\varepsilon){f_k}(\omega t,y_1+\varepsilon u_1)\right)\nonumber\\
&-\sum_{k=2}^N\varepsilon^{N+k+1}S_N(\partial u_1,\varepsilon)R_{N,f_k}(\omega t,y_1,u_1,\varepsilon)+\left(I_n+\varepsilon \partial u_1\right)^{-1}g(\omega t,y_1+\varepsilon u_1,\varepsilon),
\end{align}
with $l(j_1,k)=\max\{N-k-j_1+1,0\}$.

Since $\sup\limits_{x\in\mathbf{B}_{r}(U)}\|f_k(\cdot,x)\|_{\rho}\leq M$ for each $1\leq k\leq N$, it can be proved using \eqref{4.80}, \eqref{4.81}, \eqref{4.84} and Cauchy estimate that there exists a constant $M_1>0$ such that
$\sup\limits_{y_1\in\mathbf{B}_{\tilde{r}_1}(U)}\|f_{1,k}(\cdot,y_1)\|_{\tilde{\rho}_1}\leq M_1$ for $k=2, \dots, N$.
From \eqref{4.80}, \eqref{4.81}, \eqref{04.82}, \eqref{4.85} and Cauchy estimate, we see that the function $g_1$ is also bounded. Without loss of generality, we still assume that
$$\sup\limits_{\substack{y_1\in\mathbf{B}_{\tilde{r}_1}(U)\\ \varepsilon\in(0,\varepsilon_1)}}\|g_1(\cdot,y_1,\varepsilon)\|_{\tilde{\rho}_1}\leq M_1.$$
According to the analyticity of $f_k$ and $g$, and the Lipschitz continuity of $g$ and $\partial g$ with respect to $\varepsilon$, it can be inferred that $g_1$ and $\partial  g_1$ are also Lipschitz continuous with respect to $\varepsilon$ uniformly on $\mathbb{T}^d_{\tilde{\rho}_1}\times\mathbf{B}_{\tilde{r}_1}(U)$.

Now assume that after $m$ steps of transformation ($1 \leq m \leq N-1$), system \eqref{4} is transformed into the following form:
\begin{align}\label{4.87}
\dot{y}_m
=\sum\limits_{k=1}^m\varepsilon^k Y_k(y_m)+\sum\limits_{k=m+1}^N\varepsilon^k f_{m,k}(\omega t, y_m)+\varepsilon^{N+1}g_m(\omega t, y_m,\varepsilon),
\end{align}
where $Y_k$ is real analytic on $\mathbf{B}_{\tilde{r}_k}(U)$ for $k=1, \dots, m$, $f_{m,k}$ is real analytic on $\mathbb{T}^d_{\tilde{\rho}_m}\times\mathbf{B}_{\tilde{r}_m}(U)$ for $k=m+1, \dots, N$ and $g_m$ is continuous and real analytic on $\mathbb{T}^d_{\tilde{\rho}_m}\times\mathbf{B}_{\tilde{r}_m}(U)$ for each $\varepsilon\in(0,\varepsilon_1)$. $g_m$ and $\partial g_m$ are Lipschitz continuous with respect to $\varepsilon$ uniformly on $\mathbb{T}^d_{\tilde{\rho}_m}\times\mathbf{B}_{\tilde{r}_m}(U)$.
Moreover, there exists a constant $M_m>0$ such that
$\sup\limits_{y_m\in\mathbf{B}_{\tilde{r}_m}(U)}\|f_{m,k}(\cdot,y_m)\|_{\tilde{\rho}_m}\leq M_m$ for $k=m+1, \dots, N$ and
$$\sup\limits_{\substack{y_m\in\mathbf{B}_{\tilde{r}_m}(U)\\ \varepsilon\in(0,\varepsilon_1)}}\|g_m(\cdot,y_m,\varepsilon)\|_{\tilde{\rho}_m}\leq M_m.$$

We present the transformation at step $m+1$.
Let $u_{m+1}(\omega t,x)$ be the solution of
 \begin{align}\label{4.89}
\partial_t u_{m+1}(\omega t,x)=\widetilde{f_{m,m+1}}(\omega t,x),\ \ (\omega t,x)\in \mathbb{T}^d_{\tilde{\rho}_{m+1}}\times\mathbf{B}_{\tilde{r}_{m}}(U),
\end{align}
with $\overline{u_{m+1}}=0$. Then, similar to \eqref{4.80} and \eqref{4.81}, one has
\begin{align}
\|u_{m+1}(\cdot,x)\|_{\tilde{\rho}_{m+1}}\leq M_m \varpi_1
\end{align}
for $x\in\mathbf{B}_{\tilde{r}_m}(U)$, and
\begin{align}
\|\partial u_{m+1}(\cdot,x)\|_{\tilde{\rho}_{m+1}}\leq\frac{2NM_m \varpi_1}{r}
\end{align}
for $x\in\mathbf{B}_{\tilde{r}_{m+1}}(U)$.
Then, for sufficiently small $\varepsilon_1>0$, by performing the variable substitution
$$y_m=y_{m+1}+\varepsilon^{m+1} u_{m+1}(\omega t,y_{m+1}),\ \ \ (\omega t,y_{m+1},\varepsilon)\in \mathbb{T}^d_{\tilde{\rho}_{m+1}}\times\mathbf{B}_{\tilde{r}_{m+1}}(U)\times(0,\varepsilon_1),$$
on system \eqref{4.87}, we obtain
\begin{align*}
&\left(I_n+\varepsilon^{m+1} \partial  u_{m+1}\right)\dot{y}_{m+1}+\varepsilon^{m+1}\partial_t u_{m+1}\\
&=\sum\limits_{k=1}^m\varepsilon^k Y_k\left(y_{m+1}+\varepsilon^{m+1} u_{m+1}\right)+\varepsilon^{m+1}\overline{f_{m,m+1}}(y_{m+1})+\varepsilon^{m+1}\widetilde{f_{m,{m+1}}}(\omega t,y_{m+1})\\
&+\varepsilon^{m+1}\left(f_{m,m+1}(\omega t,y_{m+1}+\varepsilon^{m+1} u_{m+1})-f_{m,m+1}(\omega t,y_{m+1})\right)\\
&+\sum\limits_{k=m+2}^N\varepsilon^k f_{m,k}(\omega t, y_{m+1}+\varepsilon^{m+1} u_{m+1})
+\varepsilon^{N+1}g_m(\omega t, y_{m+1}+\varepsilon^{m+1} u_{m+1},\varepsilon).
\end{align*}
It follows that
\begin{align*}
&\dot{y}_{m+1}\\
&=\sum\limits_{k=1}^m\varepsilon^k \left(I_n+\varepsilon^{m+1} \partial u_{m+1}\right)^{-1}Y_k\left(y_{m+1}+\varepsilon^{m+1} u_{m+1}\right)\\
&+\varepsilon^{m+1}\left(I_n+\varepsilon^{m+1} \partial u_{m+1}\right)^{-1}\overline{f_{m,m+1}}(y_{m+1})
+\varepsilon^{m+1}\left(I_n+\varepsilon^{m+1} \partial u_{m+1}\right)^{-1}\\
&~~~\cdot\left(f_{m,m+1}(\omega t,y_{m+1}+\varepsilon^{m+1} u_{m+1})-f_{m,m+1}(\omega t,y_{m+1})\right)\\
&+\sum\limits_{k=m+2}^N\varepsilon^k \left(I_n+\varepsilon^{m+1} \partial u_{m+1}\right)^{-1}f_{m,k}(\omega t, y_{m+1}+\varepsilon^{m+1} u_{m+1})\\
&+\varepsilon^{N+1}\left(I_n+\varepsilon^{m+1} \partial u_{m+1}\right)^{-1}g_m(\omega t, y_{m+1}+\varepsilon^{m+1} u_{m+1},\varepsilon).
\end{align*}
For $s\in\mathbb{R}$, let $\lfloor s\rfloor$ denote the greatest integer less than or equal to
$s$.
For $m, j, k\in\mathbb{N}$ and $i=0,1$, define the sets
\[\mathcal{C}_{m,j}^i(k)=\left\{(j_1,j_2)\in\mathbb{N}^2: (j_1+j_2)(m+1)+j=k, j_1\leq c_{m,j},  i\leq j_2\leq c_{m,j}\right\},\]
and
\[\mathcal{C}_{m,j}^i=\left\{(j_1,j_2)\in\mathbb{N}^2: (j_1+j_2)(m+1)+j\geq N+1,  j_1\leq c_{m,j}, i\leq j_2\leq c_{m,j}\right\},\]
where $c_{m,j}=\left\lfloor\frac{N-j}{m+1}\right\rfloor$.
Define the function $\delta:\mathbb{R}\rightarrow\mathbb{R}$ such that
\[\delta(x)=
\begin{cases}
1& \text{if } x\geq1, \\
0 & \text{otherwise}.
\end{cases}
\]
For $m, k\in\mathbb{N}$, let
\[
l^*(m, k) =
\begin{cases}
\frac{k}{m+1} - 1 & \text{if } \frac{k}{m+1} - 1 \in \mathbb{Z}^+, \\
0 & \text{otherwise}.
\end{cases}
\]
Following a derivation similar to that of \eqref{04.85}, by applying Taylor's formula, we obtain
\begin{align*}
\dot{y}_{m+1}
=\sum\limits_{k=1}^{m+1}\varepsilon^k Y_k(y_{m+1})+\sum\limits_{k=m+2}^N\varepsilon^k f_{m+1,k}(\omega t, y_{m+1})+\varepsilon^{N+1}g_{m+1}(\omega t, y_{m+1},\varepsilon),
\end{align*}
where
\begin{align}\label{4.92}
Y_{m+1}=\overline{f_{m,m+1}},
\end{align}
\begin{align}\label{4.93}
&f_{m+1,k}(\omega t,y_{m+1})\nonumber\\
&=\sum_{j=1}^m\sum\limits_{(j_1,j_2)\in\mathcal{C}_{m,j}^0(k)}\frac{(-1)^{j_1}}{j_2!}(\partial u_{m+1})^{j_1}\partial ^{j_2}{Y_j}(y_{m+1})u_{m+1}^{j_2}\nonumber\\
&+\delta(l^*(m, k))(-1)^{l^*(m, k)}(\partial u_{m+1})^{l^*(m, k)}\overline{f_{m,m+1}}\nonumber\\
&+\sum\limits_{(j_1,j_2)\in\mathcal{C}_{m,m+1}^1(k)}\frac{(-1)^{j_1}}{j_2!}(\partial u_{m+1})^{j_1}\partial ^{j_2}f_{m,m+1}(\omega t,y_{m+1})u_{m+1}^{j_2}\nonumber\\
&+\sum_{j=m+2}^k\sum\limits_{(j_1,j_2)\in\mathcal{C}_{m,j}^0(k)}\frac{(-1)^{j_1}}{j_2!}(\partial u_{m+1})^{j_1}\partial ^{j_2}
{f_{m,j}}(\omega t,y_{m+1})u_{m+1}^{j_2},
\end{align}
for $k=m+2, \dots, N$ and
\begin{align*}
&g_{m+1}(\omega t, y_{m+1},\varepsilon)\nonumber\\
&=\sum_{j=1}^m\sum\limits_{(j_1,j_2)\in\mathcal{C}_{m,j}^0}\varepsilon^{(m+1)(j_1+j_2)+j-N-1}\frac{(-1)^{j_1}}{j_2!}(\partial u_{m+1})^{j_1}
\partial ^{j_2}{Y_j}(y_{m+1})u_{m+1}^{j_2}\nonumber\\
&+\sum_{j=1}^m\varepsilon^{(m+1)(c_{m,j}+1)+j-N-1}\left(I_n+ \varepsilon^{m+1}\partial u_{m+1}\right)^{-1}R_{c_{m,j},Y_j}(\omega t,y_{m+1},u_{m+1},\varepsilon^{m+1})\nonumber\\
&+\sum_{j=1}^m\varepsilon^{(m+1)(c_{m,j}+1)+j-N-1}S_{c_{m,j}}(\partial u_{m+1},\varepsilon^{m+1}){Y_j}(y_{m+1}+\varepsilon^{m+1} u_{m+1})\nonumber\\
&-\sum_{j=1}^m\varepsilon^{2(m+1)(c_{m,j}+1)+j-N-1}S_{c_{m,j}}(\partial u_{m+1},\varepsilon^{m+1})R_{c_{m,j},Y_j}(\omega t,y_{m+1},u_{m+1},\varepsilon^{m+1})\nonumber\\
&+\varepsilon^{(m+1)(c_{m,m+1}+2)-N-1}S_{c_{m,m+1}}(\partial u_{m+1},\varepsilon^{m+1})\overline{f_{m,m+1}}(y_{m+1})\nonumber\\
&+\sum\limits_{(j_1,j_2)\in\mathcal{C}_{m,m+1}^1}\varepsilon^{(m+1)(j_1+j_2+1)-N-1}\frac{(-1)^{j_1}}{j_2!}(\partial u_{m+1})^{j_1}\partial ^{j_2}f_{m,m+1}(\omega t,y_{m+1})u_{m+1}^{j_2}\nonumber\\
&+\varepsilon^{(m+1)(c_{m,m+1}+2)-N-1}\left(I_n+\varepsilon^{m+1} \partial u_{m+1}\right)^{-1}R_{c_{m,m+1},f_{m,m+1}}(\omega t,y_{m+1},u_{m+1},\varepsilon^{m+1})\nonumber\\
&+\varepsilon^{(m+1)(c_{m,m+1}+2)-N-1}S_{c_{m,m+1}}(\partial u_{m+1},\varepsilon^{m+1})\\
&~~~\cdot \left(f_{m,m+1}(\omega t,y_{m+1}+\varepsilon^{m+1} u_{m+1})-f_{m,m+1}(\omega t,y_{m+1})\right)\\
&-\varepsilon^{2(m+1)(c_{m,m+1}+1)+m-N}S_{c_{m,m+1}}(\partial u_{m+1},\varepsilon^{m+1})\\
&~~~\cdot R_{c_{m,m+1},f_{m,m+1}}(\omega t,y_{m+1},u_{m+1},\varepsilon^{m+1})\\
&+\sum_{j=m+2}^N\sum\limits_{(j_1,j_2)\in\mathcal{C}_{m,j}^0}\varepsilon^{(m+1)(j_1+j_2)+j-N-1}\frac{(-1)^{j_1}}{j_2!}(\partial u_{m+1})^{j_1}
\partial ^{j_2}f_{m,j}(\omega t,y_{m+1})u_{m+1}^{j_2}\nonumber\\
&+\sum_{j=m+2}^N\varepsilon^{(m+1)(c_{m,j}+1)+j-N-1}\left(I_n+ \varepsilon^{m+1}\partial u_{m+1}\right)^{-1}R_{c_{m,j},f_{m,j}}(\omega t,y_{m+1},u_{m+1},\varepsilon^{m+1})\nonumber\\
&+\sum_{j=m+2}^N\varepsilon^{(m+1)(c_{m,j}+1)+j-N-1}S_{c_{m,j}}(\partial u_{m+1},\varepsilon^{m+1}){f_{m,j}}(y_{m+1}+\varepsilon^{m+1} u_{m+1})\nonumber\\
&-\sum_{j=m+2}^N\varepsilon^{2(m+1)(c_{m,j}+1)+j-N-1}S_{c_{m,j}}(\partial u_{m+1},\varepsilon^{m+1})R_{c_{m,j},f_{m,j}}(\omega t,y_{m+1},u_{m+1},\varepsilon^{m+1})\nonumber\\
&+\left(I_n+\varepsilon^{m+1} \partial u_{m+1}\right)^{-1}g_m(\omega t, y_{m+1}+\varepsilon^{m+1} u_{m+1},\varepsilon).
\end{align*}
Similar to the discussion in the first-step transformation, it follows from Cauchy estimate that
$f_{m+1,k}$ ($k=m+2,\dots, N$) and $g_{m+1}$ are bounded on $\mathbb{T}^d_{\tilde{\rho}_{m+1}}\times\mathbf{B}_{\tilde{r}_{m+1}}(U)$.  Based on the analyticity of $Y_j$ ($j=1,\dots, m$), $f_{m,k}$ ($k=m+1,\dots, N$) and $g_m$, as well as the Lipschitz continuity of $g_m$ and $\partial g_m$ with respect to $\varepsilon$, it can be shown that
$g_{m+1}$ and $\partial g_{m+1}$ are also Lipschitz continuous with respect to $\varepsilon$, uniformly on $\mathbb{T}^d_{\tilde{\rho}_{m+1}}\times\mathbf{B}_{\tilde{r}_{m+1}}(U)$.

Based on the above discussion, after $N$ transformations, system \eqref{4} is transformed into system \eqref{4.78}, thus completing the proof of the lemma.
\end{proof}

{\bf Data availability statement.} This work does not have any experimental data.

\section*{Acknowledgment}
We are deeply grateful to the editor and reviewer for their suggestions, which have significantly improved the manuscript.
In particular, based on the reviewer's advice, we developed a higher-order averaging method for response solutions, which has substantially strengthened the paper's results.
The work of the first author was partially supported by the National Natural Science Foundation of China (NSFC)
(No. 12471183; 11901080).  The work of the second author was partially supported by NSFC (No. 12531009; 12471183; 12071175). The work of the third author was  partially supported by NSFC (No. 12225103; 12071065) and Science and Technology Development Plan Project of Jilin Province, China (no. 20260602002RC).

\appendix
\section{Appendix}
The following lemmas from \cite{Jo} form the basis for many estimates in this paper.
\begin{lemma}\label{ALe1}
Let $A_0$ be an $n\times n$ matrix such that the spectrum $\spec(A_0)=\diag\{\lambda_0^{(1)},\dots,\lambda_0^{(n)} \}$ with
$|\lambda_0^{(i)}|>2\mu$,  $|\lambda_0^{(i)}-\lambda_0^{(j)}|>2\mu$ for all $1\leq i, j\leq n$, $i\neq j$, and some $\mu>0$. Let $C_0$ be the matrix such
that $C_0^{-1}A_0C_0=\diag\{\lambda_0^{(1)},\dots,\lambda_0^{(n)} \}$ and $\beta_0=\max\{\|C_0\|, \|C_0^{-1}\| \}$.
Then if $A(\varepsilon)$ satisfies $\|A(\varepsilon)-A_0\|<\frac{2\mu}{(3n-1)\beta_0^2}$ for $|\varepsilon|<\varepsilon_1$, the following hold.
\begin{enumerate}
\item[{\rm(1)}] $\spec(A(\varepsilon))=\diag\{\lambda^{(1)}(\varepsilon),\dots,\lambda^{(n)}(\varepsilon) \}$ with
$|\lambda^{(i)}(\varepsilon)|>\mu$,  $|\lambda^{(i)}(\varepsilon)-\lambda^{(j)}(\varepsilon)|>\mu$ for $1\leq i, j\leq n$, $i\neq j$.
\item[{\rm(2)}] There exists a nonsingular matrix $C(\varepsilon)$ such that  $$C^{-1}(\varepsilon)A(\varepsilon)C(\varepsilon)=\diag\{\lambda^{(1)}(\varepsilon),\dots,\lambda^{(n)}(\varepsilon)\}$$
and which satisfies $\max\{\|C(\varepsilon)\|, \|C^{-1}(\varepsilon)\| \}\leq 2\beta_0$.
\item[{\rm(3)}] If $A(\varepsilon)$ is Lipschitz continuous with constant $\mathcal{L}(A(\varepsilon))$, then
$C(\varepsilon)$, $C^{-1}(\varepsilon)$ and $\lambda^{(j)}(\varepsilon)$ ($1\leq j\leq n$) are all Lipschitz continuous. Moreover, there exist
$\kappa_1=\kappa_1(A_0,\mu,\beta_0)$ and $\kappa_2=\kappa_2(A_0,\mu,\beta_0)$ such that
$$\mathcal{L}(C(\varepsilon))\leq \kappa_1\mathcal{L}(A(\varepsilon)),\ \ \ \mathcal{L}(C^{-1}(\varepsilon))\leq \kappa_1\mathcal{L}(A(\varepsilon)),$$
and $\ \mathcal{L}(\lambda^{(j)}(\varepsilon))\leq \kappa_2\mathcal{L}(A(\varepsilon))$ for $1\leq j\leq n$.
\end{enumerate}
\end{lemma}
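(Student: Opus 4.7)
The plan is to reduce all three assertions to a single perturbation analysis of $B(\varepsilon) := C_0^{-1}A(\varepsilon)C_0 = D_0 + E(\varepsilon)$, where $D_0 = \diag\{\lambda_1^0,\dots,\lambda_n^0\}$ and $E(\varepsilon) = C_0^{-1}(A(\varepsilon)-A_0)C_0$ satisfies the key estimate $\|E(\varepsilon)\|\leq \beta_0^2\|A(\varepsilon)-A_0\| < \frac{2\mu}{3n-1}$. Because $B(\varepsilon)$ and $A(\varepsilon)$ are similar, their spectra, eigenvalue gaps, and Lipschitz moduli coincide, and the diagonalizer of $A(\varepsilon)$ will be recovered as $C(\varepsilon) = C_0\widetilde{C}(\varepsilon)$ for a diagonalizer $\widetilde{C}(\varepsilon)$ of $B(\varepsilon)$ that is close to the identity.

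For (1), I would invoke the Bauer--Fike theorem applied to $A(\varepsilon)$ with the diagonalizing pair $(C_0,D_0)$: every eigenvalue $\lambda$ of $A(\varepsilon)$ satisfies $\min_i|\lambda-\lambda_i^0|\leq \beta_0^2\|A(\varepsilon)-A_0\| < \frac{2\mu}{3n-1}$. Combined with the separations $|\lambda_i^0|>2\mu$ and $|\lambda_i^0-\lambda_j^0|>2\mu$ and a standard continuity/counting argument using the total spectral projection, this localizes exactly one eigenvalue $\lambda_i(\varepsilon)$ in each disk of radius $\frac{2\mu}{3n-1}$ around $\lambda_i^0$. The triangle inequality then yields $|\lambda_i(\varepsilon)|>2\mu-\frac{2\mu}{3n-1}>\mu$ and $|\lambda_i(\varepsilon)-\lambda_j(\varepsilon)|>2\mu-\frac{4\mu}{3n-1}>\mu$ for $i\neq j$, which is precisely what makes the denominator $3n-1$ rather than some smaller number.

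For (2), simplicity of the spectrum from (1) already provides a diagonalizer, so the content is the bound $\max\{\|C\|,\|C^{-1}\|\}\leq 2\beta_0$. I would construct eigenvectors of $B(\varepsilon)$ as perturbations of the coordinate vectors $e_1,\dots,e_n$ via the Riesz spectral projections
\begin{equation*}
P_i(\varepsilon) = \frac{1}{2\pi\sqrt{-1}}\oint_{\Gamma_i}(zI-B(\varepsilon))^{-1}\,dz,
\end{equation*}
on a circle $\Gamma_i$ of radius $\mu$ around $\lambda_i^0$. Expanding the resolvent as a Neumann series $(zI-D_0)^{-1}\sum_{k\geq 0}(E(\varepsilon)(zI-D_0)^{-1})^k$ and using $\|(zI-D_0)^{-1}\|\leq 1/\mu$ on $\Gamma_i$, I obtain $\|P_i(\varepsilon)-P_i^0\|= \mathcal{O}(\|E\|/\mu)$. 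Setting $\widetilde{v}_i(\varepsilon):=P_i(\varepsilon)e_i$ and $\widetilde{C}(\varepsilon):=(\widetilde{v}_1|\cdots|\widetilde{v}_n)$, the denominator $3n-1$ is chosen so that $\|\widetilde{C}(\varepsilon)-I\|\leq \tfrac{1}{2}$, which gives $\|\widetilde{C}\|\leq 2$ and, via the Neumann bound $\|\widetilde{C}^{-1}\|\leq 1/(1-\|\widetilde{C}-I\|)$, also $\|\widetilde{C}^{-1}\|\leq 2$. Setting $C(\varepsilon):=C_0\widetilde{C}(\varepsilon)$ then gives the claimed estimate.

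For (3), the Lipschitz bounds follow by repeating this resolvent machinery for two parameter values. The second resolvent identity
\begin{equation*}
(zI-A(\varepsilon_1))^{-1}-(zI-A(\varepsilon_2))^{-1}=(zI-A(\varepsilon_1))^{-1}(A(\varepsilon_1)-A(\varepsilon_2))(zI-A(\varepsilon_2))^{-1},
\end{equation*}
integrated on $\Gamma_i$, bounds $\|P_i(\varepsilon_1)-P_i(\varepsilon_2)\|$ and hence $\|C(\varepsilon_1)-C(\varepsilon_2)\|$ by $\kappa_1\mathcal{L}(A(\varepsilon))|\varepsilon_1-\varepsilon_2|$ with $\kappa_1$ depending only on $A_0,\mu,\beta_0$. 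The bound on $\mathcal{L}(C^{-1})$ follows from $C^{-1}(\varepsilon_1)-C^{-1}(\varepsilon_2)=C^{-1}(\varepsilon_1)(C(\varepsilon_2)-C(\varepsilon_1))C^{-1}(\varepsilon_2)$ together with the uniform bound $\|C^{-1}\|\leq 2\beta_0$ from (2). For the eigenvalues, the contour-integral formula $\lambda_i(\varepsilon)=\frac{1}{2\pi\sqrt{-1}}\mathrm{tr}\oint_{\Gamma_i}z(zI-A(\varepsilon))^{-1}\,dz$ combined with the same resolvent identity produces $|\lambda_i(\varepsilon_1)-\lambda_i(\varepsilon_2)|\leq \kappa_2\mathcal{L}(A(\varepsilon))|\varepsilon_1-\varepsilon_2|$. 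The main obstacle is the quantitative bookkeeping needed to pin down the sharp constant $2\beta_0$ in (2); the unusual denominator $3n-1$ in the hypothesis is calibrated precisely to absorb the worst-case $n$-dependent factors arising from summing the $n$ Neumann series defining $P_i(\varepsilon)e_i$, and simultaneously to keep both $\|\widetilde{C}-I\|$ and $\|\widetilde{C}^{-1}-I\|$ within budget. All other steps are standard perturbation-theoretic computations, and the argument mirrors the proof given in \cite{Jo} which is cited for this lemma.
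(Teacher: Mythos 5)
First, a point of reference: the paper never proves this lemma at all --- it is quoted from Jorba--Sim\'o \cite{Jo} in the appendix, so the only ``proof'' in the paper is the citation. Measured against the argument actually given in \cite{Jo}, your route is genuinely different: there one conjugates by $C_0$, localizes the spectrum of $D_0+E$ by Gershgorin's theorem, and constructs the diagonalizer column by column by solving $(D_0+E-\lambda_i(\varepsilon))v=0$ with the $i$-th component normalized to $1$, the constant $\frac{2\mu}{(3n-1)\beta_0^2}$ being exactly what makes that explicit construction (and its Lipschitz estimates, via the explicit formulas) close with the bound $2\beta_0$. You replace this by Bauer--Fike plus Riesz projections and resolvent identities. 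Parts (1) and (3) of your sketch are structurally sound (Bauer--Fike with the induced sup norm, the counting argument, and the second resolvent identity all work, and the $n$-dependent constants may be absorbed into $\kappa_1,\kappa_2$), modulo the remark that your ``key estimate'' $\|E\|\le\beta_0^2\|A(\varepsilon)-A_0\|$ tacitly uses submultiplicativity, i.e.\ the induced sup norm rather than the entrywise one.

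The genuine gap is in (2), and it is precisely the point you defer as ``bookkeeping''. You take $\widetilde v_i(\varepsilon)=P_i(\varepsilon)e_i$ and assert that the denominator $3n-1$ is ``calibrated precisely'' so that $\|\widetilde C(\varepsilon)-I\|\le\tfrac12$, but this is the entire quantitative content of (2) and your own estimates do not deliver it. On $\Gamma_i$ one only has $\|(zI-D_0)^{-1}\|\le 1/\mu$, so the Neumann series gives $\|P_i(\varepsilon)-P_i^0\|\le x/(1-x)$ with $x=\|E\|/\mu<\tfrac{2}{3n-1}$, i.e.\ a per-column bound of roughly $\tfrac{2}{3(n-1)}$; already for $n=2$ this is $\tfrac23>\tfrac12$, and converting column bounds into the induced sup norm of $\widetilde C-I$ in general costs a further factor of order $n$. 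To recover the stated constant you would have to exploit the finer structure of the residues (the first-order contribution to column $i$ has entries $E_{ji}/(\lambda_i^0-\lambda_j^0)$, so the separation $|\lambda_i^0-\lambda_j^0|>2\mu$ can be used, and similarly in higher orders), or else change the normalization of the eigenvectors as in \cite{Jo}; with a different normalization the resulting $C(\varepsilon)$ is a different matrix, so it is not automatic that the projection-based one satisfies $\max\{\|C\|,\|C^{-1}\|\}\le2\beta_0$ under the stated hypothesis. Since this uniform $2\beta_0$ bound is what feeds the resolvent bounds behind $\kappa_1,\kappa_2$ in (3) and is used at every step of the KAM iteration (Lemmas 2.2--2.5 of the paper), this step must actually be carried out, not asserted.
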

\begin{lemma}\label{ALe2}
Let $0<\nu\leq1$, $\tau\geq1$. Then
$$\sum\limits_{k\in\mathbb{Z}^d}e^{-\nu |k|}|k|^\tau\leq \frac{20 d}{3\nu^{d+\tau}}\left(\frac{d+\tau-1}{e}\right)^{d+\tau-1}\sqrt{d+\tau-1}.$$
\end{lemma}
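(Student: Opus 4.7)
The plan is to reduce the multidimensional lattice sum to a one-dimensional sum indexed by the $\ell^1$-norm, and then bound that sum using a Gamma-integral comparison followed by Stirling's formula.

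First I would partition $\mathbb{Z}^d$ by the $\ell^1$-norm $|k|=j$. Writing $N_j = \#\{k\in\mathbb{Z}^d : |k|_1 = j\}$, a volume/surface comparison with the $\ell^1$-ball (whose volume is $(2r)^d/d!$) gives a polynomial-in-$j$ bound of the form $N_j \leq \frac{2^d(j+1)^{d-1}}{(d-1)!}$ for $j\geq 1$, with $N_0 = 1$. Grouping the sum by $|k|_1$,
\begin{equation*}
\sum_{k\in\mathbb{Z}^d} e^{-\nu|k|}|k|^\tau = \sum_{j=0}^\infty N_j\, j^\tau e^{-\nu j} \leq \frac{2^d}{(d-1)!}\sum_{j=1}^\infty (j+1)^{d-1} j^\tau e^{-\nu j},
\end{equation*}
and the elementary inequality $(j+1)^{d-1}\leq 2^{d-1}j^{d-1}$ for $j\geq 1$ reduces the task to controlling $\sum_{j\geq 1} j^{d+\tau-1} e^{-\nu j}$.

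Next I would bound that one-dimensional sum by a Gamma integral. The function $g(x)=x^{d+\tau-1}e^{-\nu x}$ is unimodal on $[0,\infty)$ with peak at $x^* = (d+\tau-1)/\nu$, so a standard monotone-piece integral comparison (sum $\leq$ integral on each monotone half, plus the peak value) gives
\begin{equation*}
\sum_{j=1}^\infty j^{d+\tau-1}e^{-\nu j} \leq \int_0^\infty x^{d+\tau-1}e^{-\nu x}\,dx + g(x^*) = \frac{\Gamma(d+\tau)}{\nu^{d+\tau}} + \frac{1}{\nu^{d+\tau-1}}\left(\frac{d+\tau-1}{e}\right)^{d+\tau-1}.
\end{equation*}
For $\nu\leq 1$ the boundary term is majorized by a constant times the first term. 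Applying Stirling in the form $\Gamma(z+1)\leq e\sqrt{z}\,(z/e)^z$ with $z = d+\tau-1$ produces the factor $\sqrt{d+\tau-1}\left((d+\tau-1)/e\right)^{d+\tau-1}/\nu^{d+\tau}$ that appears on the right-hand side of the claim.

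The main obstacle will be the explicit constant $20/3$: the crude combinatorial prefactor $\tfrac{2^d}{(d-1)!}\cdot 2^{d-1} = \tfrac{4^d}{2(d-1)!}$ times the Stirling constant $e$ gives a bound of the form $\frac{C_d\, d}{\nu^{d+\tau}}\left((d+\tau-1)/e\right)^{d+\tau-1}\sqrt{d+\tau-1}$ with $C_d = \frac{4^d e}{2\cdot d!}(1+o(1))$, and one must verify $C_d \leq 20/3$ for all $d\geq 1$. This follows by noting that $4^d/d!$ peaks around $d=3,4$ (where $\tfrac{4^d}{d!} = 32/3$) and decays both as $d$ increases (by the ratio test $4/(d+1) < 1$ for $d\geq 4$) and as $d$ decreases, so a finite case check nails the constant. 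To sharpen to the exact $20/3$ the bound on $N_j$ should be tightened slightly (e.g.\ using the sharper volume comparison $N_j\leq \tfrac{(2j+1)^d-(2j-1)^d}{d!}$ for $j\geq 1$) and the boundary term absorbed carefully; the rest of the proof consists of bookkeeping these constants.
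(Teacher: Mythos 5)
The paper itself does not prove this lemma --- it is quoted verbatim from Jorba--Sim\'o \cite{Jo} --- so your proposal has to stand on its own, and as written it has two genuine gaps. The first is the lattice-count bound $N_j\leq \frac{2^d(j+1)^{d-1}}{(d-1)!}$: this is false whenever $d$ is large compared with $j$. For $j=1$ one has $N_1=2d$, while your right-hand side is $\frac{2^d\,2^{d-1}}{(d-1)!}$, which tends to $0$ as $d\to\infty$ (already for $d=8$ it equals $2^{15}/7!\approx 6.5<16$). The underlying reason is that a volume/shell comparison does not control the number of lattice points on a \emph{thin} $\ell^1$-sphere in high dimension; the correct elementary estimate is $N_j\leq 2^d\binom{j+d-1}{d-1}\leq \frac{2^d(j+d-1)^{d-1}}{(d-1)!}$ (signs times compositions), and one must then live with the factor $(j+d-1)^{d-1}$ rather than $(j+1)^{d-1}$. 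This is repairable --- the dominant $j$ in the sum is of order $(d+\tau-1)/\nu\geq d-1$, so $(j+d-1)^{d-1}$ can be compared to $(2j)^{d-1}$ there, with the small-$j$ range treated separately --- but it is exactly this extra factor of roughly $2^{d-1}$ that your bookkeeping already cannot afford.

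The second gap is the constant. You candidly compute that your route gives a prefactor $C_d\approx \frac{4^d e}{2\,d!}$, whose maximum (at $d=3,4$) is about $14.5>\tfrac{20}{3}\approx 6.67$, so the stated inequality is not reached; and the ``sharper volume comparison'' you propose to close the gap, $N_j\leq \frac{(2j+1)^d-(2j-1)^d}{d!}$, is also false (for $d=3$ one has $N_j=4j^2+2$ versus $4j^2+\tfrac13$; for $d=5$, $j=1$ it gives $242/120\approx 2$ against $N_1=10$, and it again tends to $0$ for fixed $j$ as $d\to\infty$). The sound parts of your plan --- grouping by the $\ell^1$-norm, the sum-versus-integral comparison for the unimodal function $x^{d+\tau-1}e^{-\nu x}$, absorbing the peak term using $\nu\leq 1$, and Stirling in the form $\Gamma(z+1)\leq e\sqrt z\,(z/e)^z$ --- are fine and do yield a bound of the form $\frac{C(d)}{\nu^{d+\tau}}\left(\frac{d+\tau-1}{e}\right)^{d+\tau-1}\sqrt{d+\tau-1}$ once the lattice count is fixed, and such a weaker bound would in fact suffice for every application in this paper; but the specific constant $\frac{20d}{3}$ claimed in the lemma is not established by your argument, so as a proof of the statement it is incomplete.
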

\begin{lemma}\label{ALe3}
Define
$$f(z,\varepsilon)=\sum\limits_{|k|\geq2}a_k(\varepsilon)z^k,\ \ \ \ \ k\in\mathbb{N}^n,$$
and assume that the sum is convergent for all $z\in D=D_1\times\cdots\times D_n\subset \mathbb{C}^n$,
where each $D_j$ is a fixed disk in $\mathbb{C}$. Moreover, suppose that $f$ depends on $\varepsilon$ in a Lipschitz way with
Lipschitz constant $L$. Let $\hat{D}=\hat{D}_1\times\cdots\times \hat{D}_n\subset D$ be such that
${\rm radius}(\hat{D}_j)\leq{\rm radius}({D}_j)\alpha$ for some $0<\alpha<1$. Then if $z\in\hat{D}$, it holds that
\begin{enumerate}
\item[{\rm(1)}] $\|\partial_zf(z,\varepsilon_1)-\partial_zf(z,\varepsilon_2)\|\leq\Delta_1(\alpha)L\alpha|\varepsilon_1-\varepsilon_2|$,
\item[{\rm(2)}] $|f(z,\varepsilon_1)-f(z,\varepsilon_2)|\leq\Delta_2(\alpha)L\alpha^2|\varepsilon_1-\varepsilon_2|$,
\end{enumerate}
where $\Delta_1(\alpha)$ and $\Delta_2(\alpha)$ defined for $\alpha<1$ are continuous and increasing functions.
\end{lemma}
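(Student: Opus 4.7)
The plan is to reduce everything to classical Cauchy estimates applied to the holomorphic function $h(z) := f(z,\varepsilon_1) - f(z,\varepsilon_2)$. By hypothesis, $h$ is holomorphic on the polydisk $D = D_1 \times \cdots \times D_n$ and satisfies the uniform bound $|h(z)| \leq L|\varepsilon_1 - \varepsilon_2|$ for $z \in D$. Its Taylor expansion inherits the form $h(z) = \sum_{|k|\geq 2} b_k z^k$ with $b_k = a_k(\varepsilon_1) - a_k(\varepsilon_2)$, where the order-$\geq 2$ truncation is crucial for generating the $\alpha^2$ and $\alpha$ factors. Cauchy's inequality on the distinguished boundary of $D$ (of polyradius $r = (r_1,\dots,r_n)$) gives the coefficient bound
$$|b_k| \leq \frac{L|\varepsilon_1 - \varepsilon_2|}{r^k}, \qquad r^k := r_1^{k_1}\cdots r_n^{k_n}.$$

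For part (1), I will estimate $|h(z)|$ on $\hat{D}$ term by term. Because $z \in \hat{D}$ means $|z_j| \leq r_j \alpha$, we have $|z^k| \leq r^k \alpha^{|k|}$, so the factors of $r^k$ cancel and
$$|h(z)| \leq L|\varepsilon_1 - \varepsilon_2|\sum_{|k|\geq 2} \alpha^{|k|} = L|\varepsilon_1-\varepsilon_2|\,\alpha^2 \sum_{m\geq 2} \binom{m+n-1}{n-1}\alpha^{m-2},$$
grouping multi-indices by total order $|k|=m$ and using $\#\{k\in\mathbb{N}^n:|k|=m\} = \binom{m+n-1}{n-1}$. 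The series on the right converges for $\alpha \in (0,1)$ and defines $\Delta_2(\alpha)$, which is manifestly continuous and increasing in $\alpha$ (being a power series in $\alpha$ with positive coefficients).

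For part (2), I will differentiate term-wise: $\partial_{z_j} z^k = k_j z^{k-e_j}$ (vanishing when $k_j = 0$), so on $\hat{D}$ the bound $|\partial_{z_j}z^k| \leq (k_j/r_j)\, r^k \alpha^{|k|-1}$ holds. Combining with the coefficient bound and summing over $k$ yields
$$|\partial_{z_j} h(z)| \leq \frac{L|\varepsilon_1 - \varepsilon_2|}{r_j}\,\alpha \sum_{m\geq 2} m\binom{m+n-1}{n-1}\alpha^{m-2},$$
where once more the factor $\alpha^{|k|-1} = \alpha\cdot \alpha^{m-2}$ for $|k|=m\geq 2$ produces the single $\alpha$ on the right. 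Taking the maximum over $j$, one sets
$$\Delta_1(\alpha) := \Bigl(\max_j r_j^{-1}\Bigr)\sum_{m\geq 2} m\binom{m+n-1}{n-1}\alpha^{m-2},$$
again a continuous increasing function on $[0,1)$.

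The only real point to check is that the two majorant series converge for $\alpha < 1$; this follows from the ratio test since $\binom{m+n-1}{n-1}$ grows polynomially in $m$. There is no substantive obstacle — the essential trick is that Cauchy's estimate on $D$ is sharp enough to let the coefficient bound exactly cancel the $r^k$ and extract pure powers of $\alpha$ from evaluation on the smaller polydisk $\hat{D}$. The requirement $|k|\geq 2$ (i.e., vanishing of constant and linear terms in $z$) is precisely what upgrades the small-factor gain from $\alpha^0$ in part (2) and $\alpha^0$ in part (1) to $\alpha$ and $\alpha^2$ respectively, which is how this lemma supplies the quadratic smallness needed in the estimates of Lemma~\ref{lem3} and Lemma~\ref{lem5}.
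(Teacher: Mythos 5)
Your proof is correct: passing the uniform Lipschitz bound to the Taylor coefficients via Cauchy's inequalities ($|a_k(\varepsilon_1)-a_k(\varepsilon_2)|\le L|\varepsilon_1-\varepsilon_2|/r^k$) and then resumming on the shrunken polydisk, with the hypothesis $|k|\ge 2$ supplying the factors $\alpha^2$ and $\alpha$, is exactly the standard argument. Note that the paper gives no proof of this appendix lemma---it is quoted from Jorba and Sim\'o \cite{Jo}---and your argument is essentially the one used there; the only cosmetic difference is that your $\Delta_1$ absorbs the fixed constant $\max_j r_j^{-1}$ determined by $D$, which is harmless for the way the lemma is applied.
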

\begin{lemma}\label{ALe4}
Let $p(t,\varepsilon)$ be an analytic quasi-periodic function on a strip of width $\rho$,
$$p(t,\varepsilon)=\sum\limits_{k\in\mathbb{Z}^d}p_k(\varepsilon)e^{\sqrt{-1}\langle k,\omega\rangle t}.$$
Then the following statements hold.
\begin{enumerate}
\item[{\rm(1)}] If $p(t,\varepsilon)$ is Lipschitz continuous in $\varepsilon$ with constant $L$, then each Fourier coefficient $p_k(\varepsilon)$ is Lipschitz continuous in $\varepsilon$ with constant $L_k = L e^{-\rho |k|}$.
\item[{\rm(2)}] Assume each Fourier coefficient $p_k(\varepsilon)$ is Lipschitz continuous in $\varepsilon$ with constant $L_k$, and that $L_k \leq L |k|^{\tau} e^{-\rho |k|}$ for each $k \neq 0$ and some $L > 0$. If $p(t,\varepsilon)$ is restricted to a strip of width $\rho_1 < \rho$, then it is Lipschitz continuous in $\varepsilon$ with constant
$$L' = L_0 + L \varpi(\tau, \rho - \rho_1),$$
where $\varpi$ is defined in \eqref{2.5}.
\end{enumerate}

\end{lemma}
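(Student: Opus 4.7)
The key is to represent the quasi-periodic function as a genuine analytic function on the torus and transfer between function-space and Fourier-coefficient estimates by standard contour-shift / termwise-summation arguments. Write $p(t,\varepsilon)=P(\omega t,\varepsilon)$ where $P:\mathbb{T}^d_\rho\times(0,\varepsilon_0)\to\mathbb{C}^n$ is analytic in $\theta$ on the strip $\mathbb{T}^d_\rho$, with Fourier expansion $P(\theta,\varepsilon)=\sum_{k\in\mathbb{Z}^d}p_k(\varepsilon)e^{\sqrt{-1}\langle k,\theta\rangle}$, and the coefficient formula
\begin{equation*}
p_k(\varepsilon)=\frac{1}{(2\pi)^d}\int_{\mathbb{T}^d}P(\theta,\varepsilon)e^{-\sqrt{-1}\langle k,\theta\rangle}\,d\theta.
\end{equation*}

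For part (1), I would apply this integral representation to the difference $P(\theta,\varepsilon_1)-P(\theta,\varepsilon_2)$, whose sup norm on the strip $\mathbb{T}^d_\rho$ is at most $L|\varepsilon_1-\varepsilon_2|$ by hypothesis. Using analyticity, shift each integration contour $\theta_j$ into the complex plane by $-\rho\,\mathrm{sign}(k_j)$ (with the convention $\mathrm{sign}(0)=0$). This shift is legitimate because $P$ is analytic and $2\pi$-periodic on $\mathbb{T}^d_\rho$, so the integrand's contributions along the vertical sides cancel. The phase $e^{-\sqrt{-1}\langle k,\theta\rangle}$ acquires the factor $\prod_j e^{-\rho|k_j|}=e^{-\rho|k|}$, and the sup norm bound on the shifted contour still gives $L|\varepsilon_1-\varepsilon_2|$. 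This yields $|p_k(\varepsilon_1)-p_k(\varepsilon_2)|\leq L\,e^{-\rho|k|}|\varepsilon_1-\varepsilon_2|$, which is the claimed Lipschitz bound $L_k=Le^{-\rho|k|}$.

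For part (2), I work in the reverse direction. On the narrower strip $\mathbb{T}^d_{\rho_1}$ with $\rho_1<\rho$, estimate termwise using $|e^{\sqrt{-1}\langle k,\theta\rangle}|\leq e^{\rho_1|k|}$:
\begin{align*}
|P(\theta,\varepsilon_1)-P(\theta,\varepsilon_2)|
&\leq\sum_{k\in\mathbb{Z}^d}|p_k(\varepsilon_1)-p_k(\varepsilon_2)|\,e^{\rho_1|k|}\\
&\leq L_0|\varepsilon_1-\varepsilon_2|+\sum_{k\neq0}L|k|^\tau e^{-\rho|k|}|\varepsilon_1-\varepsilon_2|\,e^{\rho_1|k|}\\
&=\Bigl(L_0+L\sum_{k\neq 0}|k|^\tau e^{-(\rho-\rho_1)|k|}\Bigr)|\varepsilon_1-\varepsilon_2|.
\end{align*}
Recognizing the remaining sum as $\varpi(\tau,\rho-\rho_1)$ from \eqref{2.5}, which is finite by Lemma \ref{ALe2}, gives the desired constant $L'=L_0+L\varpi(\tau,\rho-\rho_1)$ and also justifies termwise convergence, hence the analyticity of the difference on $\mathbb{T}^d_{\rho_1}$.

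No serious obstacle is anticipated; the only mild point of care is the contour-shift in part (1), which must be performed coordinate-by-coordinate to generate the product decay $e^{-\rho|k|}=\prod_j e^{-\rho|k_j|}$ rather than a weaker one-directional bound, and in part (2) one must split off $k=0$ (where the hypothesis $L_k\leq L|k|^\tau e^{-\rho|k|}$ degenerates) and keep its Lipschitz constant $L_0$ as a separate additive term.
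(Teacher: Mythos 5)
Your proposal is correct; note that the paper itself gives no proof of Lemma \ref{ALe4} (it is quoted from \cite{Jo}), and your argument --- applying the coordinate-by-coordinate contour shift (Cauchy estimate) to the Fourier coefficients of the difference $P(\cdot,\varepsilon_1)-P(\cdot,\varepsilon_2)$ in part (1), and summing termwise on the narrower strip with the $k=0$ term split off in part (2) --- is exactly the standard argument behind the cited result, with the constants coming out as stated.
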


\baselineskip 9pt \renewcommand{\baselinestretch}{1.08}


\begin{thebibliography}{xx}
\bibitem{ALP}W. Aziz, J. Llibre, C. Pantazi, Centers of quasi-homogeneous polynomial differential equations of degree three, Adv. Math. 254 (2014), 233-250.
\bibitem{BLV}L. Barreira, J. Llibre, C. Valls, Periodic orbits near equilibria, Comm. Pure Appl. Math. 63 (2010), no. 9, 1225-1236.
\bibitem{B} N.N. Bogolyubov, O Nekotoryh Statisti\v ceskih Metodah v Matemati\v cesko\u {\i} Fizike (On Some
Statistical Methods in Mathematical Physics), Akademiya Nauk Ukrainsko\u {\i} SSR, Kiev, 1945.
\bibitem{BB}B.L.J. Braaksma, H.W. Broer, On a quasiperiodic Hopf bifurcation, Ann. Inst. H. Poincar\'{e} Anal. Non Lin\'{e}aire 4 (1987), no. 2, 115-168.
\bibitem{B-W}H.W. Broer, H. Han{\ss}mann, \`{A}. Jorba, J. Villanueva, F. Wagener, Normal-internal resonances in quasi-periodically forced oscillators: a conservative approach, Nonlinearity 16 (2003), no. 5, 1751-1791.
\bibitem{B-B}H.W. Broer, G.B. Huitema, F. Takens, B.L.J. Braaksma, Unfoldings and bifurcations of quasi-periodic tori, Mem. Amer. Math. Soc. 83 (1990), no. 421, 175 pp.
\bibitem{BL}A. Buic\u{a}, J. Llibre, Averaging methods for finding periodic orbits via Brouwer degree,
Bull. Sci. Math. 128 (2004), no. 1, 7-22.
\bibitem{CN}M.R. C\^{a}ndido, D.D. Novaes, On the torus bifurcation in averaging theory, J. Differential Equations 268 (2020), no. 8, 4555-4576.
\bibitem{CdW}H. Cheng, R. de la Llave, F. Wang, Response solutions to the quasi-periodically forced systems with degenerate equilibrium: a simple proof of a result of W Si and J Si and extensions, Nonlinearity 34 (2021), no. 1, 372-393.
\bibitem{CG} L. Corsi, G. Gentile, Oscillator synchronisation under arbitrary quasi-periodic forcing, Commun. Math.
Phys. 316 (2012), no. 2, 489-529.
\bibitem{CG1} L. Corsi, G. Gentile, Resonant tori of arbitrary codimension for quasi-periodically forced systems,
NoDEA Nonlinear Differential Equations Appl. 24 (2017), no. 1, 21 pp.
\bibitem{GG} G. Gentile, G. Gallavotti, Degenerate elliptic resonances, Comm. Math. Phys. 257 (2005), no. 2, 319-362.
\bibitem{GV}G. Gentile, F. Vaia, Response solutions for strongly dissipative quasi-periodically forced systems with arbitrary nonlinearities and frequencies, J. Differential Equations 282 (2021), 370-406.
\bibitem{GLWZ}J. Gin\'{e}, J. Llibre, K. Wu, X. Zhang, Averaging methods of arbitrary order, periodic solutions and integrability, J. Differential Equations 260 (2016), no. 5, 4130-4156.
\bibitem{GH}J. Guckenheimer, P. Holmes, Nonlinear oscillations, dynamical systems, and bifurcations of vector fields, Applied Mathematical Sciences 42, Springer-Verlag, New York, 1990.
\bibitem{Hale} J.K. Hale, Ordinary Differential Equations, second edition, Robert E. Krieger Publishing Co., Inc., Huntington, N.Y.,
1980.
\bibitem{Jo} \`{A}. Jorba,  C. Sim\'{o},  On quasi-periodic perturbations of elliptic equilibrium points, SIAM J. Math. Anal. 27 (1996), no. 6, 1704-1737.
\bibitem{K-B} N. Krylov, N.N. Bogolyubov, Prilozhenie metodov nelineinoi mekhaniki k teorii statsionarnykh kolebanii (The Application of Methods of Nonlinear Mechanics to the Theory of Stationary oscillations), Kiev: Akademiya Nauk Ukrainsko\u {\i} SSR, Kiev, 1934.
\bibitem{LHL} S. Liu, M. Han, J. Li, Bifurcation methods of periodic orbits for piecewise smooth systems, J. Differential Equations 275 (2021), 204-233.
\bibitem{LNT}J. Llibre, D.D. Novaes, M.A. Teixeira, Higher order averaging theory for finding periodic solutions via Brouwer degree, Nonlinearity 27 (2014), no.~3, 563-583.
\bibitem{MQX}Z. Ma, R. Qu, J. Xu, Response solutions of $2$-dimensional degenerate systems under quasi-periodic perturbations. J. Differential Equations 360 (2023), 500-522.
\bibitem{MX} Z. Ma, J. Xu, Response solutions for completely degenerate oscillators under arbitrary quasi-periodic perturbations, Comm. Math. Phys. 402 (2023), no. 1, 1-33.
\bibitem{N}D.D. Novaes, An averaging result for periodic solutions of Carath\'eodory differential equations, Proc. Amer. Math. Soc. 150 (2022), no. 7, 2945-2954.
\bibitem{NP} D.D. Novaes, P.C. Pereira, Invariant tori via higher order averaging method: existence, regularity, convergence, stability, and dynamics,  Math. Ann.  389 (2024), no. 1, 543-590.
\bibitem{NS}D.D. Novaes, F.B. Silva, Higher order analysis on the existence of periodic solutions in continuous differential equations via degree theory, SIAM J. Math. Anal.  53 (2021), no. 2, 2476-2490.
\bibitem{PNC} P.C. Pereira, D.D. Novaes, M.R. C\^{a}ndido, A mechanism for detecting normally hyperbolic invariant tori in differential equations, J. Math. Pures Appl. 177 (2023), no. 9, 1-45.
\bibitem{S-V-M} J. Sanders, F. Verhulst, J. Murdock, Averaging Methods in Nonlinear Dynamical
Systems, Applied Mathematical Sciences, Springer, New York, 2007.
\bibitem{SS}W. Si, J. Si, Response solutions and quasi-periodic degenerate bifurcations for quasi-periodically forced systems, Nonlinearity 31 (2018), no. 6, 2361-2418.
\bibitem{SXY} W. Si, L. Xu, Y. Yi, Response solutions in singularly perturbed, quasi-periodically forced nonlinear oscillators, J. Nonlinear Sci. 33 (2023), no. 6, 28 pp.
\bibitem{SY} W. Si, Y. Yi, Response solutions in degenerate oscillators under degenerate perturbations, Ann. Henri Poincar\'{e} 23 (2022), no. 1, 333-360.
\bibitem{S}C. Sim\'o, Averaging under fast quasiperiodic forcing, Hamiltonian Mechanics: Integrability and Chaotic Behaviour, NATO ASI Series B Physics vol 331, Torun, Poland, 1993, ed J Seimenis (Plenum, New York, 1994), pp. 13-34.
\bibitem{W}J.H. Wilkinson, The algebraic eigenvalue problem, Clarendon Press, Oxford, 1965.
\bibitem{XYL} J. Xing, X. Yang, Y. Li, Affine-periodic solutions by averaging methods, Sci. China Math. 61 (2018), no. 3, 439-452.
\bibitem{XJ}J. Xu, S. Jiang, Reducibility for a class of nonlinear quasi-periodic differential equations with degenerate equilibrium point under small perturbation, Ergodic Theory Dynam. Systems 31 (2011), no. 2, 599-611.
\bibitem{XLW} J. Xu, Q. Li, J. Wang, Response solutions of 3-dimensional degenerate quasi-periodic systems with small parameter, J. Differential Equations 293 (2021), 188-225.
\bibitem{XSS} X. Xu, W. Si, J. Si, Stoker's problem for quasi-periodically forced reversible systems with multidimensional Liouvillean frequency, SIAM J. Appl. Dyn. Syst. 19 (2020), no. 4, 2286-2321.
\bibitem{ZJS} T. Zhang, \`{A}. Jorba, J. Si, Weakly hyperbolic invariant tori for two dimensional quasiperiodically forced maps in a
degenerate case, Discrete Contin. Dyn. Syst., Ser. A 36 (2016), 6599-6622.

\end{thebibliography}
\end{document}